\newcommand{\s}{\mathcal{S}(\mathbb{R}^N)}
\newcommand{\M}{\mathfrak{M}}
\newcommand{\ep}{\varepsilon}
\newcommand{\D}{\mathcal{D}}
\newtheorem{thm}{Theorem}
\newtheorem{lemma}[thm]{Lemma}
\newtheorem{prop}[thm]{Proposition}
\begin{document}

\title[Pseudodifferential operators of mixed type]{Pseudodifferential operators of mixed type adapted to distributions of $k$-planes}
\author[Stein]{Elias M. Stein}
\address{Department of Mathematics, Princeton University, Princeton, NJ 08540, USA}
\email{stein@math.princeton.edu}
\author[Yung]{Po-Lam Yung}
\address{Mathematical Institute, University of Oxford, Oxford, OX2 6GG, UK}
\email{yung@maths.ox.ac.uk}
\dedicatory{Dedicated to Alex Nagel on the occasion of his retirement \\ and \\ D.H. Phong on the occasion of his $60$th birthday.}

\begin{abstract}
We study the phenomena that arise when we combine the standard pseudodifferential operators with those operators that appear in the study of some sub-elliptic estimates, and on strongly pseudoconvex domains. The algebra of operators we introduce is geometrically invariant, and is adapted to a smooth distribution of tangent subspaces of constant rank. We isolate certain ideals in the algebra whose analysis is of particular interest.
\end{abstract}

\maketitle

    \setcounter{section}{-1}
\section{Introduction}

In this paper we study a class of pseudodifferential operators, given in terms of a smooth distribution $\mathcal{D}$ of subspaces of constant rank $k$ of the tangent space. The resulting class will be an algebra of pseudolocal operators, that is geometrically invariant. It will consist of a 2-parameter family $\Psi^{m,n}$, such that $\Psi^{m,0}$ contains the standard `isotropic' algebra of pseudodifferential operators of order $m$, and $\Psi^{0,n}$ contains the `non-isotropic' pseudodifferential operators of order $n$. Thus we think of our algebra as one that has mixed homogeneities, and understanding our algebra sheds light on how `isotropic' pseudodifferential operators compose with `non-isotropic' pseudodifferential operators that are related to sub-elliptic estimates and several complex variables. The results are first stated on $\mathbb{R}^N$, where the formulation is the cleanest. We will then adapt the results to the setting of smooth manifolds, and show that many natural operators in the context of strongly pseudoconvex domains are contained in our algebra.

There has been a lot of work on pseudodifferential operators that are `non-isotropic', especially in the context of subelliptic analysis and several complex variables; see e.g. work of Nagel-Stein \cite{MR549321}, Phong-Stein \cite{MR648484}, Taylor \cite{MR764508}, Beals-Greiner \cite{MR953082} and Ponge \cite{MR2417549} for some antecedents of our results. Those in turn can be traced back to some earlier work on `non-isotropic' singular integrals related to sub-elliptic estimates, which can be found in e.g. Folland-Stein \cite{MR0367477}, Rothschild-Stein \cite{MR0436223}. For a general treatment of relevant topics on singular integrals, see Street \cite{StreetBook}. 

Our class of operators of order $(0,0)$ form a subalgebra of the algebra of flag kernels, as was investigated in \cite{MR1312498}, \cite{MR1376298}, \cite{MR1818111} and \cite{MR2949616} by M\"uller, Nagel, Ricci, Stein and Wainger, and in \cite{MR2602167} by G{\l}owacki. In fact, say on the Heisenberg group $\mathbb{H}^n = \mathbb{C}^n \times \mathbb{R}$, there are two classes of flag kernels, adapted to the flags $\{0\} \subset \mathbb{C}^n \subset \mathbb{H}^n$ and $\{0\} \subset \mathbb{R} \subset \mathbb{H}^n$, and the intersection of these two algebras of flag kernels gives rise to an algebra of pseudolocal operators; c.f. Nagel-Ricci-Stein-Wainger \cite{NRSW2012}. Our algebra $\Psi^{m,n}$, when $m = n = 0$, is then a variable coefficient version of this algebra of pseudolocal singular integrals. In this way, \cite{NRSW2012} was the starting point to our present work.

Our main results can be summarized as follows. First, given a distribution $\mathcal{D}$ on $\mathbb{R}^N$, and two real numbers $m,n$, we will define a class of symbols $S^{m,n}$ of order $(m,n)$, and associate to it a class of pseudodifferential operators $\Psi^{m,n}$. Operators in such classes can be composed, so that the composition of an operator of order $(m_1,n_1)$ with one of order $(m_2,n_2)$ is an operator of order $(m,n)$, with $m = m_1 + m_2$, $n = n_1 + n_2$. Furthermore, $\Psi^{m,n}$ is closed under taking adjoints, and enjoys a diffeomorphism invariance. In addition, while the operators associated to $S^{0,0}$ fail to be weak-type $(1,1)$, they will preserve $L^p$ for $1 < p < \infty$. On the other hand, for any $\varepsilon > 0$, $\Psi^{\varepsilon,-2\varepsilon}$ and $\Psi^{-\varepsilon,\varepsilon}$ form two particularly significant ideals of $\Psi^{0,0}$: they map $L^1$ to weak-$L^1$, and preserve the isotropic and non-isotropic Lipschitz spaces of any positive order. We have also a rather surprising fractional integration result on $L^p$, $p > 1$, for operators of order $(m,n)$, when both $-1 < m < 0$ and $-(N-1) < n < 0$; it is stronger than the one predicted by the composition of an operator of order $(m,0)$ with one of order $(0,n)$. The same class of operators also map $L^1$ into weak $L^{1^*}$ for the best possible exponent $1^*$, if in addition $n \ne m(N-1)$. Many of these results depend on a characterization of the kernels of operators of class $\Psi^{m,n}$, in terms of differential inequalities and cancellation conditions, that are of interest in their own right.

We then apply our results to the Szeg\"o projection, as well as the Calder\'on operator, on the boundary of smoothly bounded strongly pseudoconvex domains in $\mathbb{C}^n$.

The full proofs of many of the results stated below are rather lengthy, so only sketchy indications of the main ideas are given here. The details, as well as some further extensions, will appear elsewhere. It is to be emphasized that our results are valid for distributions $\mathcal{D}$ where no curvature assumptions are required. For simplicity of exposition, we limit ourselves below to the case of distributions of tangent subspaces of codimension 1; the results hold more generally for distributions of tangent subspaces of higher codimension. 

\section{Geometric Preliminaries} \label{sect:prelim}

\subsection{Assumptions on $\mathcal{D}$} \label{subsect:assump}

Suppose we are given a smooth distribution $\mathcal{D}$ of tangent subspaces in $\mathbb{R}^N$. Assume $\mathcal{D}$ is of codimension 1, and is given by the nullspace of a 1-form whose coefficients have uniformly bounded derivatives. Then there exists a global frame $$X_1, \dots, X_N$$ of tangent vectors on $\mathbb{R}^N$, such that the first $N-1$ vectors form a basis of $\mathcal{D}$ at every point. Furthermore, one can pick
$$
X_i = \sum_{j=1}^N A_i^j(x) \frac{\partial}{\partial x^j}, \quad i = 1,\dots,N,
$$
such that all coefficients $A_i^j(x)$ are $C^{\infty}$ functions, with
$$
\|\partial_x^I A_i^j\|_{L^{\infty}} \leq C_I \quad \text{for all multiindices $I$},
$$ 
and
$$
|\det(A_i^j(x))| \geq c > 0 \quad \text{uniformly as $x$ varies over $\mathbb{R}^N$}.
$$ 
Below we will fix such a frame $X_1,\dots,X_N$, and use that to construct our class of symbols. It is to be pointed out, however, that our symbol class will ultimately depend only on the distribution $\mathcal{D}$, and not on the particular choice of $X_1,\dots,X_N$. We will often need the coefficients of the inverse of the matrix $(A_i^j)$; let's denote that inverse by $(B_j^k)$. In other words, $$\sum_{j=1}^N  A_i^j(x) B_j^k(x) = \delta_i^k \quad \text{for every $x$}.$$

\subsection{The role of two linear maps $M_x$ and $L_x$} 

One can define a variable coefficient linear map on the cotangent spaces, namely
$$
M_x \colon T_x^*(\mathbb{R}^N) \to T_x^*(\mathbb{R}^N), \quad x \in \mathbb{R}^N,
$$
so that
$$
M_x \xi = \sum_{i=1}^N \left( \sum_{j=1}^N A_i^j(x) \xi_j \right) dx^i \quad \text{if $\xi = \sum_{j=1}^N \xi_j dx^j$}.
$$
Then $M_x$ restricts to a map
$$
M_x \colon \text{Annihilator} (\mathcal{D}_x) \to \text{span}(dx^N) \subset T_x^*(\mathbb{R}^N).
$$
In fact, $M_x$ maps the dual frame $\theta^1, \dots, \theta^N$ of $X_1, \dots, X_N$ to the frame $dx^1, \dots, dx^N$.

One can also define a variable coefficient linear map on the tangent spaces, namely
$$
L_x \colon T_x(\mathbb{R}^N) \to T_x(\mathbb{R}^N), \quad x \in \mathbb{R}^N,
$$
so that
$$
L_x = (M_x^{-1})^t.
$$
(Here $(M^{-1})^t$ denotes the inverse transpose of a linear map $M$.) More explicitly, 
$$
L_x v = \sum_{i=1}^N \left( \sum_{j=1}^N B_j^i(x) v^j \right) \frac{\partial}{\partial x^i}, \quad \text{if $v = \sum_{j=1}^N v^j \frac{\partial}{\partial x^j}$.}
$$
It follows that $L_x$ restricts to a map
$$
L_x \colon \mathcal{D}_x \to \text{kernel}(dx^N) \subset T_x(\mathbb{R}^N).
$$

\subsection{A variable seminorm on the cotangent bundle}

We need a variable seminorm on the cotangent bundle of $\mathbb{R}^N$, defined by 
$$
\rho_x(\xi) = \left( \sum_{i=1}^{N-1} \left| (M_x \xi)_i \right|^2 \right)^{1/2} \quad \text{if $M_x \xi = \sum_{i=1}^N (M_x \xi)_i dx^i$}.
$$ 
(Note only the first $N-1$ components of $M_x \xi$ are involved.) In addition, we write 
$$
|\xi| = \left( \sum_{i=1}^{N} \left| \xi_i \right|^2 \right)^{1/2}, \quad \text{if $\xi = \sum_{i=1}^N \xi_i dx^i$},
$$ 
which we think of as the Euclidean norm of $\xi \in T_x \mathbb{R}^N$.

\subsection{An induced quasi-distance on $\mathbb{R}^N$} \label{subsect:quasidistance}

It is useful to consider the map
$$
\Theta_0 \colon \mathbb{R}^N \times \mathbb{R}^N \to \mathbb{R}^N,
$$
given formally by 
$$
\Theta_0(x,y) = L_x(x-y);
$$
more precisely, the $i$-th component of $\Theta_0(x,y)$ is given by
$$
\Theta_0(x,y)_i := \sum_{j=1}^N B_j^i(x) (x^j-y^j), \quad i = 1,\dots,N.
$$

For each $x, y \in \mathbb{R}^N$, in addition to the Euclidean distance $|x-y|$, we can now define a quasi-distance $d(x,y)$, such that
$$
d(x,y) = |x-y|+  \left| \Theta_0(x,y)_N \right|^{1/2}.
$$ 
This has the following properties:
$$
d(x,y) \simeq \sum_{i=1}^{N-1} \left| \Theta_0(x,y)_i \right| + \left| \Theta_0(x,y)_N \right|^{1/2} \quad \text{if $d(x,y) < 1$},
$$
and
$$
d(x,y) \simeq |x-y| \quad \text{if $d(x,y) \geq 1$}.
$$
Also,
\begin{enumerate}[(a)]
\item $d(x,y) \simeq d(y,x)$ for all $x, y \in \mathbb{R}^N$, and
\item $d(x,z) \lesssim d(x,y) + d(y,z)$ for all $x, y, z \in \mathbb{R}^N$.
\end{enumerate}

This quasi-distance, together with the Lebesgue measure on $\mathbb{R}^N$, gives $\mathbb{R}^N$ the structure of a space of homogeneous type.

\subsection{Variants of $\Theta_0$} \label{subsect:Theta}

Suppose now $\Delta = \{(x,y) \in \mathbb{R}^N \times \mathbb{R}^N \colon |x-y| < \delta\}$, for some $\delta > 0$, is a tubular neighborhood of the diagonal of $\mathbb{R}^N \times \mathbb{R}^N$, and $\Theta$ is a $C^{\infty}$ map 
$$
\Theta \colon \Delta \to \mathbb{R}^N,
$$
with
$$
\|\partial_x^I \partial_y^J \Theta(x,y)\|_{L^{\infty}} \leq C_{I,J} \quad \text{for all multiindices $I$ and $J$}.
$$
We write 
$$
\mathcal{B}_j^k(x) = - \left. \frac{\partial}{\partial y^j} \right|_{y=x} \Theta(x,y)_k, 
$$
and assume in addition the existence of some absolute constant $c_0$ such that 
$$
|\det(\mathcal{B}_j^k(x))| \geq c_0 > 0 \quad \text{uniformly for all $x$}.
$$
Given such a map $\Theta$, we say that it is \textbf{compatible} with our distribution $\mathcal{D}$, if and only if 
\begin{equation} \label{eq:compat1}
\mathcal{D}_x = \text{the kernel of $\sum_{j=1}^N \mathcal{B}_j^N(x) dx^j$ for all $x$}.
\end{equation}
\newpage
\noindent Equivalently, it is compatible with $\mathcal{D}$, if the map 
$$
\mathcal{L}_x \colon T_x (\mathbb{R}^N) \to T_x (\mathbb{R}^N),
$$
$$
\mathcal{L}_x v := \sum_{i=1}^N \left( \sum_{j=1}^N \mathcal{B}_j^i(x) v^j \right) \frac{\partial}{\partial x^i} \quad \text{if $v = \sum_{j=1}^N v^j \frac{\partial}{\partial x^j}$}
$$
restricts to a map 
\begin{equation} \label{eq:compat2}
\mathcal{L}_x \colon \mathcal{D}_x \to \text{kernel}(dx^N) \subset T_x(\mathbb{R}^N)
\end{equation} for all $x$. The $\Theta_0$ defined in Section~\ref{subsect:quasidistance} is an example of such $\Theta$.

In applications, we will often work locally, and be given a distribution $\mathcal{D}$ of tangent subspaces on an Euclidean ball $B$, say $B = B(0,\delta/2)$. Given a smooth map $\Theta \colon B \times B \to \mathbb{R}^N$, we then say $\Theta$ is compatible with $\mathcal{D}$ on $B$, if (\ref{eq:compat1}) (or (\ref{eq:compat2})) holds for all $x \in B$. In this situation, one can show (c.f. Lemma 5.2 of \cite{MR0116352}) that there exists an absolute constant $\delta_0$ (depending only on finitely many of the structural constants $C_{I,J}$ and $c_0$) such that the following holds:
\begin{enumerate}[(i)]
\item there exists $\tilde{\Theta} \colon \Delta \to \mathbb{R}^N$, and a distribution $\tilde{\mathcal{D}}$ on $\mathbb{R}^N$, with $$\tilde{\Theta} = \Theta \quad \text{on $B(0,\delta_0) \times B(0,\delta_0)$,} \quad \text{ and } \quad \tilde{\mathcal{D}} = \mathcal{D} \quad \text{on $B(0,\delta_0)$};$$ 
\item $\tilde{\Theta}$ is compatible with $\tilde{\mathcal{D}}$ on the whole $\mathbb{R}^N$; and
\item $\tilde{\Theta}(x,y) = \mathcal{L}_0(x-y)$ if both $x, y \notin B(0,2\delta_0)$, and $\tilde{\mathcal{D}}_x = \mathcal{D}_0$ for $x \notin B(0,2\delta_0)$.
\end{enumerate}
Since we can restrict our attention to functions supported inside $B(0,\delta_0)$, we may, by abuse of notation, write $\Theta$ in place of $\tilde{\Theta}$, and $\mathcal{D}$ in place of $\tilde{\mathcal{D}}$, and we are back to our earlier set-up.

For example, one can define such a compatible $\Theta$ locally by the exponential map
\begin{equation} \label{eq:Thetaexp}
x = y \exp(\Theta(x,y) \cdot X)
\end{equation}
for $x,y$ on a ball $B$.

\subsection{Some special derivatives on the cotangent bundle} \label{subsect:goodcotder}

We need these to describe the symbol classes we have.

$D_{\xi}$ is a good derivative on the cotangent bundle of $\mathbb{R}^N$. To describe that, let $x$ be the standard coordinate system on $\mathbb{R}^N$, and $\xi$ be the dual coordinates to $x$. $D_{\xi}$ is then the unique differential operator of the form 
$\sum_{j=1}^N c_j(x,\xi) \frac{\partial}{\partial \xi_j},$
such that
$$
D_{\xi} [a_0(M_x \xi)] = (\partial_{\xi_N} a_0) (M_x \xi) \quad \text{for all functions $a_0(\xi)$, and all $x$, $\xi$}.
$$
It follows that
\begin{equation} \label{eq:Dxidef}
D_{\xi} = \sum_{j=1}^N B_j^N(x) \frac{\partial}{\partial \xi_j}.
\end{equation}

Next, for $i=1,\dots,N$, $D_i$ is the unique differential operator of the form 
$\frac{\partial}{\partial x^i} + \sum_{j=1}^N b_{ij}(x,\xi) \frac{\partial}{\partial \xi_j}$
such that
$$
D_i [a_0(M_x \xi)] = 0 \quad \text{for all functions $a_0(\xi)$, and all $x$, $\xi$}.
$$
We have 
\begin{equation} \label{eq:Didef}
D_i = \frac{\partial}{\partial x^i} + \sum_{k=1}^N \sum_{p=1}^N \sum_{l=1}^N \frac{\partial B_k^p}{\partial x^i}(x) A_p^l(x) \xi_l \frac{\partial}{\partial \xi_k},
\end{equation}
and we write $D^I$ for $D_{i_1} D_{i_2} \dots D_{i_k}$, if $I = (i_1, \dots, i_k)$, where each $i_j \in \{1, \dots, N\}$.

\subsection{An example} 

If we identify $\mathbb{R}^N$ with the Heisenberg group $\mathbb{H}^n$, with $N = 2n+1$, and if $\mathcal{D}$ is the `horizontal' subspace of the tangent space spanned by the horizontal vector fields
$$
X_i = \frac{\partial}{\partial x^i} + 2 x^{i+n} \frac{\partial}{\partial t}, \quad X_{i+n} = \frac{\partial}{\partial x^{i+n}} - 2 x^i \frac{\partial}{\partial t}, \quad i = 1, \dots, n,
$$
(here $t = x^{2n+1}$ is the last coordinate on $\mathbb{R}^N$), then 
$$
\rho_x(\xi) \simeq \sum_{i=1}^n |\xi_i + 2x^{i+n} \tau| + |\xi_{i+n} - 2x^i \tau|
$$
(here $\tau$ is the last coordinate to $\xi$). Furthermore, if $x, y$ are on $\mathbb{R}^N$ are sufficiently close to each other, then writing $w$ for the product $y^{-1} x$ on the Heisenberg group, and $z$, $t$ for the first $2n$ and last coordinate of $w$ respectively, we get
$$
d(x,y) \simeq |z| + |t|^{1/2}.
$$
If we write $X_{2n+1} = \frac{\partial}{\partial t}$, the special derivatives $D_{\xi}$ and $D_i$ are given by
$$
D_{\xi} = \frac{\partial}{\partial \tau} + \sum_{j=1}^n \left(2 x^j  \frac{\partial}{\partial \xi_{j+n}} - 2 x^{j+n} \frac{\partial}{\partial \xi_j} \right),
$$
and
$$
D_i = \frac{\partial}{\partial x^i} + 2 \tau \frac{\partial}{\partial \xi_{i+n}}, \quad D_{i+n} = \frac{\partial}{\partial x^{i+n}} - 2 \tau \frac{\partial}{\partial \xi_i}, \quad i = 1, \dots, n.
$$

\section{The algebra} 

\subsection{Two ways of defining the symbol class}

We can now define the symbols of mixed homogeneity. We will fix a distribution $\mathcal{D}$ of tangent subspaces of $\mathbb{R}^N$, and fix a frame of tangent vectors $X_1, \dots, X_N$ on $\mathbb{R}^N$ that satisfies the conditions laid out in Section~\ref{sect:prelim}. Given any $m, n$, we say that $a(x,\xi) \in S^{m,n}$, if $a(x,\xi) \in C^{\infty}(\mathbb{R}^N \times \mathbb{R}^N)$, and satisfies
\begin{equation} \label{eq:symbolclassmn}
|\partial_{\xi}^{\alpha} D_{\xi}^{\beta} D^I a(x,\xi)| \lesssim_{\alpha,\beta,I} (1+|\xi|)^{m-|\beta|} (1+\rho_x(\xi)+|\xi|^{1/2})^{n-|\alpha|}.
\end{equation}

Equivalently, $a(x,\xi) \in S^{m,n}$, if and only if 
$$
a(x,\xi) = a_0(x,M_x \xi)
$$
for some symbol $a_0(x,\xi)$ that satisfies
\begin{equation} \label{eq:symbolclassmnequiv}
|\partial_{\xi'}^{\alpha} \partial_{\xi_N}^{\beta} \partial_x^I a_0(x,\xi)| \lesssim_{\alpha,\beta,I} (1+|\xi|)^{m-|\beta|} (1+\|\xi\|)^{n-|\alpha|},
\end{equation}
where $\|\xi\| := |\xi'|+|\xi_N|^{1/2}$ if $\xi = (\xi',\xi_N)$.

If we want to emphasize the dependence of our symbol class on $\mathcal{D}$, we write $S^{m,n}(\mathcal{D})$ instead of $S^{m,n}$. Note that if we write $\mathcal{D}^0$ for the `constant' distribution on $\mathbb{R}^N$ given by 
$$
\mathcal{D}^0_x = \text{kernel of $dx^N$ at every $x \in \mathbb{R}^N$},
$$ then (\ref{eq:symbolclassmnequiv}) is equivalent to saying that $a_0 \in S^{m,n}(\mathcal{D}^0)$.

We remark that the order of derivatives appearing in (\ref{eq:symbolclassmn}) does not matter, as can be seen from the equivalent characterization (\ref{eq:symbolclassmnequiv}). Note also that these symbols form a subclass of those standard symbols of type $(1/2,1/2)$. Furthermore, the above symbol class is defined independent of the choice of the vector fields $X_1, \dots, X_N$. It is also invariant under changes of coordinates: suppose $\tilde{x} = \Phi(x)$ is another coordinate system on $\mathbb{R}^N$, where the derivatives of $\frac{\partial \Phi}{\partial x}$ are uniformly bounded above, and the determinant of $(\frac{\partial \Phi}{\partial x})$ is uniformly bounded from below. We say in this case the change of coordinate is \textbf{admissible}. One can show that our symbol class is invariant under admissible changes of coordinates. In fact more is true: the class of operators we associate to $S^{m,n}(\mathcal{D})$ depends only on $\mathcal{D}$, and not on the choice of a coordinate system on $\mathbb{R}^N$. (See Theorem~\ref{thm:admisscoord} below.)

\subsection{Two special subclasses of the symbol class}

Within our class of symbols $S^{m,n}$, there are two particularly interesting subclasses, one of which is purely `isotropic', which we denote by $S^m$, another of which is purely `non-isotropic', which we denote by $S^n_{\mathcal{D}}$. We are thus led to think of our class $S^{m,n}$ as a class of symbols of mixed homogeneity.

First, given $m \in \mathbb{R}$, we take $S^m$ to be the class of all smooth functions $a(x,\xi)$ on the cotangent bundle of $\mathbb{R}^N$ such that
$$
|\partial_{\xi}^{\alpha} \partial_x^I a(x,\xi)| \lesssim_{\alpha,I} (1+|\xi|)^{m-|\alpha|}.
$$
This class is sometimes also called the \textbf{`isotropic' symbols} (or \textbf{classical symbols} of type $(1,0)$) of order $m$ . It is defined independent of the distribution $\mathcal{D}$. We also write $S^{-\infty} = \bigcap_{m \in \mathbb{N}} S^{-m}$.

Next, suppose we are given a distribution $\mathcal{D}$ of tangent subspaces on $\mathbb{R}^N$, with a frame of tangent vectors $X_1, \dots, X_N$ as above. For any $n \in \mathbb{R}$, we then take $S^n_{\mathcal{D}}$ to be the class of \textbf{`non-isotropic' symbols} of order $n$, which is the class of all smooth functions $a(x,\xi)$ on the cotangent bundle of $\mathbb{R}^N$ such that
$$
|\partial_{\xi}^{\alpha} D_{\xi}^{\beta} D^I a(x,\xi)| \lesssim_{\alpha,\beta,I} (1+\rho_x(\xi)+|\xi|^{1/2})^{n-|\alpha|-2|\beta|}.
$$ 
Equivalently, $a(x,\xi) \in S^n_{\mathcal{D}}$, if and only if $a(x,\xi) = a_0(x,M_x \xi)$ for some $a_0(x,\xi) \in S^n$ that satisfies
$$
|\partial_{\xi'}^{\alpha} \partial_{\xi_N}^{\beta} \partial_x^I a_0(x,\xi)| \lesssim_{\alpha,\beta,I} (1+\|\xi\|)^{n-|\alpha|-2|\beta|}.
$$
Note that the latter is the same as saying $a_0 \in S^n_{\mathcal{D}^0}$, and the order of derivatives appearing in the definitions does not matter as with the case of $S^{m,n}$.

One observes easily that
$$S^m \subset S^{m,0}, \quad S^n_{\mathcal{D}} \subset S^{0,n},$$
and that
$$
S^{m,n} \subset S^{m,n'} \quad \text{if $n < n'$}, \quad S^{m,n} \subset S^{m',n} \quad \text{if $m < m'$}.
$$
Furthermore, one has, for any $m,n \in \mathbb{R}$,
\begin{equation} \label{eq:symbol_inclusion}
S^{m+\varepsilon, n-2\varepsilon} \subset S^{m,n} \quad \text{and} \quad S^{m-\varepsilon, n+\varepsilon} \subset S^{m,n} \quad \text{for all $\varepsilon > 0$}.
\end{equation}

\subsection{Main theorems}

Now suppose $\mathcal{D}$ is a distribution of tangent subspaces on $\mathbb{R}^N$, and that we are given a symbol $a(x,\xi) \in S^{m,n}(\mathcal{D})$. We define the associated pseudodifferential operator 
$$T_af(x) = \int_{\mathbb{R}^N} a(x,\xi) \widehat{f}(\xi) e^{2\pi i x \cdot \xi} d\xi$$
where $f \in \s$. Our first theorem shows that our class of pseudodifferential operators form an algebra:

\begin{thm} \label{thm:compose}
If $a_1 \in S^{m_1,n_1}$ and $a_2 \in S^{m_2,n_2}$, then $$T_{a_1} \circ T_{a_2} = T_a$$ for some symbol $a \in S^{m,n}$, where $m = m_1 + m_2$, $n = n_1 + n_2$.
\end{thm}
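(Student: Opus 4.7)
The plan is to follow the classical Kohn–Nirenberg scheme for composing pseudodifferential operators, but with careful tracking of both the isotropic and non-isotropic homogeneities. Writing the composition in amplitude form,
$$
a(x,\eta) = \iint a_1(x,\eta+\xi)\, a_2(x-z,\eta)\, e^{-2\pi i z\cdot\xi}\, dz\, d\xi,
$$
and Taylor-expanding $a_1(x,\eta+\xi)$ in $\xi$ about $0$ to order $M$, the standard integration-by-parts argument yields the formal expansion
$$
a(x,\eta) \sim \sum_{|\alpha|<M} \frac{(2\pi i)^{-|\alpha|}}{\alpha!}\, \partial_\xi^\alpha a_1(x,\eta)\, \partial_x^\alpha a_2(x,\eta) + R_M(x,\eta).
$$
The core task is to show each term lies in $S^{m_1+m_2, n_1+n_2}$, and moreover that the $\alpha$-th term lies in a class that decreases in order (either isotropic or non-isotropic) as $|\alpha|$ grows, so that the remainder $R_M$ can be made arbitrarily small via the symbol-class inclusion (\ref{eq:symbol_inclusion}).

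I would first handle the flat distribution $\mathcal{D}^0$, where by the equivalent characterization (\ref{eq:symbolclassmnequiv}), $\partial_{\xi'}$-derivatives of $a_i^0$ drop a factor of $(1+\|\eta\|)$ and $\partial_{\xi_N}$-derivatives drop a factor of $(1+|\eta|)$, while $\partial_x$-derivatives are neutral. Consequently, splitting $\alpha = (\alpha',\beta)$, the $\alpha$-th term belongs to $S^{m_1+m_2-\beta,\, n_1+n_2-|\alpha'|}(\mathcal{D}^0)$; by (\ref{eq:symbol_inclusion}) this is contained in $S^{m_1+m_2,\, n_1+n_2-\varepsilon(|\alpha|)}(\mathcal{D}^0)$ for some $\varepsilon(|\alpha|) > 0$ growing with $|\alpha|$, so the expansion converges asymptotically. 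Standard remainder estimates — integration by parts in $z$ and $\xi$ against the oscillatory factor $e^{-2\pi i z\cdot \xi}$, using symbol bounds on $a_1,a_2$ — show $R_M \in S^{m_1+m_2-N_1,\, n_1+n_2-N_2}(\mathcal{D}^0)$ with $N_1, N_2 \to \infty$ as $M \to \infty$, completing the proof for the flat case.

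For a general distribution $\mathcal{D}$, one writes $a_i(x,\xi) = a_i^0(x, M_x\xi)$ and tracks how the $x$-dependence of $M_x$ propagates through the expansion. The ordinary $x$-derivative $\partial_{x^i}$ differs from the good derivative $D_i$ by an additional term involving $\xi_l\, \partial_{\xi_k}$ (see (\ref{eq:Didef})), whose effect on a symbol is bounded using the trade-off $|\xi| \lesssim (1+\rho_x(\xi)+|\xi|^{1/2})^2$; the resulting correction contributes a lower-order symbol once (\ref{eq:symbol_inclusion}) is invoked, with $\varepsilon$ of isotropic regularity traded for $2\varepsilon$ of non-isotropic regularity. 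Similarly, $\partial_\xi a_1$ expands via the chain rule through $M_x$ into combinations of $\partial_\xi$ and $D_\xi$ acting on $a_1^0$. The main obstacle lies precisely in this bookkeeping: verifying that each correction term and the remainder fit into $S^{m_1+m_2, n_1+n_2}$ after all substitutions, and organizing the asymptotic expansion so that the contribution of large $|\alpha|$ is genuinely negligible in the mixed sense. The inclusion (\ref{eq:symbol_inclusion}) provides precisely the slack that allows the mixed asymptotic expansion to close up cleanly between the isotropic and non-isotropic scales.
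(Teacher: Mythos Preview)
Your plan has a genuine gap in the passage from the flat case to a general distribution $\mathcal{D}$. The Kohn--Nirenberg expansion does not yield an asymptotic series here, because these symbols are of type $(1/2,1/2)$: in the general case, each ordinary $\partial_x$-derivative on $a_2$ loses exactly as much as each $\partial_\xi$-derivative on $a_1$ gains. Concretely, the correction $\partial_{x^i}-D_i$ is a sum of terms of the form $\xi_l\,\partial_{\xi_k}$; applied to $a_2\in S^{m_2,n_2}$, the factor $\partial_{\xi_k}$ gains one power of $(1+\rho_x(\xi)+|\xi|^{1/2})$ while the factor $\xi_l$ costs $(1+|\xi|)\lesssim (1+\rho_x(\xi)+|\xi|^{1/2})^2$, so the correction lands only in $S^{m_2,n_2+1}$, not in any smaller class. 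Pairing with $\partial_\xi a_1\in S^{m_1,n_1-1}$ gives a term back in $S^{m_1+m_2,n_1+n_2}$ with \emph{no} improvement over the leading term. Thus the successive terms in your expansion do not decrease in order, the remainder $R_M$ cannot be made small, and the inclusion (\ref{eq:symbol_inclusion}) does not rescue you: $S^{m+1,n-1}$ is not contained in $S^{m,n}$. The paper in fact remarks explicitly, right after the statement of the theorem, that there is no asymptotic development for $a$ in terms of $a_1$ and $a_2$.

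The paper's route avoids this obstruction entirely. It first uses the adjoint result (Theorem~\ref{thm:adjoint}) to write $T_{a_2}=T_{a_2^*}^*$, so that $T_{a_1}T_{a_2}=T_c$ for the compound symbol $c(x,y,\xi)=a_1(x,\xi)\,\overline{a_2^*(y,\xi)}$; the point is that now the two factors carry the seminorms $\rho_x(\xi)$ and $\rho_y(\xi)$, both compatible with $\rho_x(\xi)$ in the sense of (\ref{eq:rhoxycond}). One then appeals to Theorem~\ref{thm:compoundtosymbol}, whose core is Proposition~\ref{prop:restrictedcompoundtosymbol}: a direct oscillatory-integral estimate for $[c]$ obtained by localizing to $|\zeta|\leq |\xi|/2$ and integrating by parts with the operator $(I-|\xi|^{-1}\Delta_w-|\xi|\Delta_\zeta)^M$, together with the pointwise comparison between $\rho_{x,x-w}(\xi-\zeta)$ and $\rho_x(\xi)$. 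No Taylor expansion in $\zeta$ is carried out term by term; the bound is obtained in one stroke, and this is precisely what sidesteps the $(1/2,1/2)$ loss that defeats your approach.
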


There is no explicit asymptotic development for $a$ in terms of $a_1$ and $a_2$. However, when either $a_1$ or $a_2$ is an `isotropic' symbol, then because the symbols of the class $S^{m,n}$ are of type $(1/2,1/2)$, the Kohn-Nirenberg formula continues to hold.

Our second theorem shows that our class of pseudodifferential operators is closed under adjoints:

\begin{thm} \label{thm:adjoint}
Let $a \in S^{m,n}$, and $T_a^*$ be the adjoint of $T_a$ with respect to the standard $L^2$ inner product on $\mathbb{R}^N$. Then there exists a symbol $a^* \in S^{m,n}$ such that 
$$T_a^* = T_{a^*}.$$
\end{thm}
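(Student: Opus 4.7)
The plan is to exploit the kernel characterization of operators in $\Psi^{m,n}$ together with the symmetry of the class under conjugate transpose. Writing $K(x,y)$ for the distributional kernel of $T_a$, the kernel of $T_a^*$ is $K^*(x,y) = \overline{K(y,x)}$, and the problem becomes showing that $K^*$ is the kernel of some operator in $\Psi^{m,n}$.

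The main approach I would follow is: first invoke the characterization mentioned in the introduction (operators in $\Psi^{m,n}$ are those whose kernels satisfy prescribed size and differential inequalities away from the diagonal, together with cancellation conditions on dyadic pieces, expressed in terms of the quasi-distance $d(x,y)$ and the associated scales). Second, observe that these conditions are essentially symmetric under $(x,y)\mapsto(y,x)$ combined with complex conjugation: the quasi-distance satisfies $d(x,y)\simeq d(y,x)$; the differential inequalities in $x$ and $y$ are interchanged by the swap; and the cancellation conditions, typically formulated as uniform bounds on $T\phi$ and $T^*\phi$ for normalized bumps $\phi$, are automatically interchanged by taking adjoints. Hence $K^*$ inherits the characterization, producing some $a^* \in S^{m,n}$ with $T_{a^*} = T_a^*$.

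For a direct derivation of $a^*$ one would instead work with the oscillatory integral
$$a^*(x,\xi) = \iint e^{2\pi i z \cdot \mu}\, \overline{a(x-z,\xi+\mu)} \, dz \, d\mu,$$
i.e.\ the amplitude-to-symbol reduction applied to the amplitude $\overline{a(y,\xi)}$ of $T_a^*$, and verify the estimates (\ref{eq:symbolclassmn}) by decomposing $(z,\mu)$-space dyadically along the two relevant scales $1+|\xi|$ and $1+\rho_x(\xi)+|\xi|^{1/2}$ and integrating by parts. The main obstacle, in either approach, is the type $(1/2,1/2)$ nature of the class (cf.\ the remark after (\ref{eq:symbol_inclusion})), so that no Kohn-Nirenberg asymptotic expansion yields a convergent series and all estimates must be obtained non-asymptotically. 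A further subtlety in the direct approach is that the good derivatives $D^I$ couple $x$- and $\xi$-derivatives via (\ref{eq:Didef}), so commuting $D^I$ past the integrand requires comparing $\rho_{x-z}(\xi+\mu)$ with $\rho_x(\xi)$; this comparison is precisely where the uniform bounds on $A_i^j$ and $B_j^k$ from Section~\ref{subsect:assump} are used.
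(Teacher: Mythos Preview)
Your second, ``direct'' approach is essentially the paper's proof. The paper formalizes it by introducing the class $CS^{m,n}$ of compound symbols (Section~\ref{subsect:compound}) and proving the reduction Theorem~\ref{thm:compoundtosymbol}; the adjoint is then obtained in one line by observing that $c(x,y,\xi)=\overline{a(y,\xi)}\in CS^{m,n}$, with $\rho_{x,y}(\xi):=\rho_y(\xi)$ as the relevant auxiliary seminorm. Your oscillatory-integral formula for $a^*$ is exactly (\ref{eq:compoundtosymbol}) for this $c$, and your remark about comparing $\rho_{x-z}(\xi+\mu)$ with $\rho_x(\xi)$ is precisely the compatibility condition (\ref{eq:rhoxycond}) used in Proposition~\ref{prop:restrictedcompoundtosymbol}.

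Your \emph{primary} approach via the kernel characterization, however, has a genuine gap as stated. The converse direction of the kernel characterization is only supplied in the paper for $-1<m<0$, $-(N-1)<n<0$ (Theorem~\ref{thm:KernelThetaEasyConverse}) and for $m=n=0$ (Theorem~\ref{thm:KernelS00}); for general $(m,n)$ no such characterization is recorded, so you cannot simply ``invoke'' it. Moreover, even in the $m=n=0$ case the cancellation conditions (\ref{eq:canccondz})--(\ref{eq:canccondzt}) are formulated not for $K(x,y)$ directly but for $k_0(x,u)$ in the representation $K(x,y)=k_0(x,\Theta_0(x,y))$, where $\Theta_0(x,y)=L_x(x-y)$ is genuinely asymmetric in $(x,y)$. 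Passing from $\overline{K(y,x)}=\overline{k_0(y,\Theta_0(y,x))}$ to a function of the form $k_0^*(x,\Theta_0(x,y))$ is not the tautology your symmetry argument suggests; it requires exactly the kind of change-of-$\Theta$ analysis carried out in Theorem~\ref{thm:Thetarep}, which in turn already rests on Theorem~\ref{thm:compoundtosymbol}. So the kernel route, done correctly, circles back to the compound-symbol machinery anyway.
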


The next result is about representation of pseudodifferential operators when some $\Theta$ compatible with our distribution $\mathcal{D}$ is given. From the characterization of $S^{m,n}(\mathcal{D})$ in (\ref{eq:symbolclassmnequiv}), it is clear that if $a \in S^{m,n}(\mathcal{D})$, then there exists $a_0 \in S^{m,n}(\mathcal{D}^0)$ such that when interpreted suitably,
\begin{equation} \label{eq:Theta0rep}
T_a f(x) = \int_{\mathbb{R}^N} \int_{\mathbb{R}^N} a_0(x,\xi) f(y) e^{2\pi i \Theta_0(x,y) \cdot \xi} dy d\xi
\end{equation}
for all $f \in \s$ and all $x \in \mathbb{R}^N$. It turns out that this holds for all $\Theta$ that are compatible with $\mathcal{D}$, at least locally. To describe this, we recall the following notion: we say $E$ is an \textbf{infinitely smoothing operator}, if $E = T_e$ for some symbol $e \in S^{-\infty}$.

\begin{thm} \label{thm:Thetarep}
Suppose $\Theta \colon \Delta \to \mathbb{R}^N$ is a map compatible with $\mathcal{D}$. Then there exists an absolute constant $\delta_0 > 0$ such that for any $\phi \in C^{\infty}_c(B(0,\delta_0))$ with $\phi = 1$ on $B(0,\delta_0/2)$, the following holds:
\begin{enumerate}[(a)]
\item For any $a \in S^{m,n}(\mathcal{D})$, there exists $a_0 \in S^{m,n}(\mathcal{D}^0)$, such that
\begin{equation} \label{eq:a0repTheta}
T_a f(x) = \int_{\mathbb{R}^N} \int_{\mathbb{R}^N} \phi(x-y) a_0(x,\xi) f(y) e^{2\pi i \Theta(x,y) \cdot \xi} dy d\xi + Ef(x),
\end{equation}
where $E$ is an infinitely smoothing operator. 
\item Conversely, for any $a_0 \in S^{m,n}(\mathcal{D}^0)$, and any infinitely smoothing operator $E$, there exists $a \in S^{m,n}(\mathcal{D})$, such that (\ref{eq:a0repTheta}) holds.
\end{enumerate}
\end{thm}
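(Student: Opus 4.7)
The plan is to derive the representation for an arbitrary compatible $\Theta$ by reducing to the canonical case $\Theta_0(x,y) = L_x(x-y)$, for which (\ref{eq:Theta0rep}) is already in hand. Both $\Theta_0$ and $\Theta$ vanish on the diagonal and have invertible $y$-derivatives there, so Hadamard's lemma produces a smooth matrix-valued function $H(x,y)$, defined and invertible on a tube around the diagonal, with
\[
\Theta_0(x,y) = H(x,y)\,\Theta(x,y), \qquad H(x,x) = L_x\,\mathcal{L}_x^{-1}.
\]
Compatibility of both $\Theta$ and $\Theta_0$ with $\mathcal{D}$ forces $H(x,x)$ to carry $\ker(dx^N)$ into itself, so $H(x,x)$ is block upper-triangular with last row $(0,\dots,0,h(x))$; for $y \ne x$ the corresponding block of $H(x,y)$ is merely $O(|y-x|)$. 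This is the structural fact that will make the change of variables respect the mixed homogeneities.

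For part (a), begin with (\ref{eq:Theta0rep}) and insert the cutoff $\phi(x-y)$; the error is infinitely smoothing by pseudolocality of operators of type $(1/2,1/2)$. Next perform the $y$-dependent substitution $\xi = H(x,y)^{t}\eta$ in the $\xi$-integral to arrive at
\[
T_a f(x) = \iint \phi(x-y)\,\tilde a(x,y,\eta)\,f(y)\,e^{2\pi i\,\Theta(x,y)\cdot\eta}\,dy\,d\eta + E_1 f(x),
\]
with $\tilde a(x,y,\eta) = a_0\bigl(x,\,H(x,y)^{t}\eta\bigr)\,\lvert\det H(x,y)\rvert$. The block structure of $H$ (so that the upper-right block of $H(x,y)^{t}$ vanishes at $y=x$ and is $O(|y-x|)$ otherwise) guarantees that $\tilde a$ satisfies, in $(x,\eta)$ and uniformly in $y$, the defining estimates (\ref{eq:symbolclassmnequiv}) of $S^{m,n}(\mathcal{D}^0)$, preserving the interplay between the isotropic weight $(1+|\eta|)^m$ and the non-isotropic weight $(1+\|\eta\|)^n$.

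The remaining task is to replace the $y$-dependent amplitude $\tilde a(x,y,\eta)$ by a pure symbol in $(x,\eta)$. Taylor-expand $\tilde a$ in $y$ about $y=x$ to order $M$, and absorb each factor $(y-x)^\alpha$ by integrating by parts against the phase, using the identity
\[
(y-x)^l\, e^{2\pi i\,\Theta(x,y)\cdot\eta} = -\tfrac{1}{2\pi i}\sum_j (\mathcal{B}(x,y)^{-1})^l_j\,\partial_{\eta_j}\,e^{2\pi i\,\Theta(x,y)\cdot\eta} + O(|y-x|^2)\,e^{2\pi i\,\Theta(x,y)\cdot\eta},
\]
valid near the diagonal because $\mathcal{B}^j_k(x)$ is invertible. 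Each $\partial_{\eta_j}$ with $j<N$ gains a factor $(1+\|\eta\|)^{-1}$ while $\partial_{\eta_N}$ gains $(1+|\eta|)^{-1}$, matching the weights in (\ref{eq:symbolclassmnequiv}). Iterating, the Taylor tails lie in any prescribed $S^{m-M_1,\,n-M_2}$, and a Borel-type summation extracts the desired $a_0 \in S^{m,n}(\mathcal{D}^0)$ modulo an infinitely smoothing remainder. Part~(b) is then obtained by running this argument in reverse: given an oscillatory integral with phase $\Theta$ and amplitude $a_0 \in S^{m,n}(\mathcal{D}^0)$, the substitution $\eta = H(x,y)^{-t}\xi$ returns us to the $\Theta_0$-representation, whence the correspondence $a(x,\xi) = a_0(x,M_x\xi)$ from (\ref{eq:symbolclassmnequiv}) identifies the operator as $T_a$ for some $a \in S^{m,n}(\mathcal{D})$.

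The principal technical obstacle is the amplitude reduction just outlined. The subtlety is that $\partial_y$ applied to $a_0(x,H(x,y)^{t}\eta)$ brings down factors linear in $\eta$ which a priori threaten to destroy the non-isotropic order. The compatibility condition (\ref{eq:compat2}), encoded in the block upper-triangular structure of $H(x,x)$, is precisely what ensures that the dangerous components of these linear factors vanish at $y=x$, so the extra $\eta$-weight they carry can be balanced against the $(y-x)$-weights produced by the Taylor expansion. Carrying this bookkeeping through repeated integrations by parts, while maintaining the two-weight symbol estimates uniformly in $y$, is where the bulk of the technical work must lie.
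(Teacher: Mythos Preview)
Your overall setup is sound and parallels the paper's: writing $\Theta$ and $\Theta_0$ as variable linear maps applied to $x-y$, performing the linear substitution in the frequency variable, and recognizing that compatibility with $\mathcal{D}$ forces the relevant matrix to be block upper-triangular on the diagonal. Up to this point your argument and the paper's are essentially the same (the paper writes $\Theta(x,y)=\mathcal{L}_{x,x-y}(x-y)$ and works directly from the $(x-y)\cdot\xi$ phase rather than passing through $\Theta_0$, but this is cosmetic).

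The genuine gap is in the amplitude reduction. Your claim that ``the Taylor tails lie in any prescribed $S^{m-M_1,n-M_2}$'' is not correct for this calculus, and this is exactly the point where the $(1/2,1/2)$ nature of the symbols bites. The block structure constrains $H(x,y)$ only at $y=x$; it says nothing about $\partial_y H_{Nj}(x,x)$ for $j<N$. Hence a single $\partial_y$ applied to $a_0(x,H(x,y)^t\eta)$ produces a term $\eta_N\cdot(\partial_y H_{Nj})\cdot\partial_{\xi_j}a_0$ with $j<N$, of size $(1+|\eta|)^{m+1}(1+\|\eta\|)^{n-1}$. After converting the accompanying factor $(y-x)^l$ to $\partial_{\eta}$ via your identity, the best you can recover is $(1+\|\eta\|)^{-1}$, and since $(1+|\eta|)(1+\|\eta\|)^{-2}$ can be as large as $1$, the net gain is zero. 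Iterating gives terms all of the \emph{same} order $(m,n)$, so there is no asymptotic series to Borel-sum; the paper in fact remarks explicitly (after Theorem~\ref{thm:compose}) that no asymptotic development exists in this calculus.

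The paper handles this reduction non-asymptotically. It observes that your amplitude $\tilde a(x,y,\eta)$ is a \emph{compound symbol} in the sense of Section~\ref{subsect:compound}: the block structure translates precisely into the compatibility condition $|\rho_{x,y}(\eta)-|\eta'||\lesssim |x-y|\,|\eta|$ for $\rho_{x,y}(\eta):=|(H(x,y)^t\eta)'|$, and the loss under $\partial_y,\partial_x$ is exactly the $(1+|\eta|)^{1/2}$ allowed in the $PCS^{m,n}$ estimates. One then invokes Theorem~\ref{thm:compoundtosymbol}, whose proof (Proposition~\ref{prop:restrictedcompoundtosymbol}) replaces Taylor expansion by a single integration by parts against the weighted operator
\[
\left(\frac{I-|\xi|^{-1}\Delta_w-|\xi|\,\Delta_\zeta}{1+4\pi^2|\xi|\,|w|^2+4\pi^2|\xi|^{-1}|\zeta|^2}\right)^{M},
\]
which balances the half-order losses and gains simultaneously. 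This device, not an asymptotic expansion, is what your argument is missing.
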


Finally, we have the following theorem, which shows that the class of operators associated to symbols of order $(m,n)$ is invariant under admissible changes of coordinates.

\begin{thm} \label{thm:admisscoord}
Suppose $\tilde{x} = \Phi(x)$ is an admissible coordinate system on $\mathbb{R}^N$. If $a \in S^{m,n}(\mathcal{D})$, then there exists $\tilde{a} \in S^{m,n}(d\Phi(\mathcal{D}))$, such that
$$
T_a f(x) = T_{\tilde{a}} \tilde{f} (\Phi(x))
$$
for all $f \in \s$, where $\tilde{f} := f \circ \Phi^{-1}$.
\end{thm}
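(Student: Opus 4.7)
The plan is to use Theorem~\ref{thm:Thetarep} to recast $T_a$ as an oscillatory integral whose phase is built from $\mathcal{D}$, transport this representation under $\Phi$, verify that the transported phase is compatible with $d\Phi(\mathcal{D})$, and then invoke Theorem~\ref{thm:Thetarep} in the reverse direction on the $\tilde{x}$-side.

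First I will apply Theorem~\ref{thm:Thetarep}(a) with $\Theta = \Theta_0$ to find $a_0 \in S^{m,n}(\mathcal{D}^0)$ and an infinitely smoothing $E$ with
\[
T_a f(x) = \int_{\mathbb{R}^N}\int_{\mathbb{R}^N} \phi(x-y) a_0(x,\xi) f(y) e^{2\pi i \Theta_0(x,y)\cdot \xi}\,dy\,d\xi + Ef(x).
\]
Substituting $\tilde{x} = \Phi(x)$, $\tilde{y} = \Phi(y)$, writing $\tilde{f} := f\circ\Phi^{-1}$, and setting $\tilde{\Theta}(\tilde{x},\tilde{y}) := \Theta_0(\Phi^{-1}(\tilde{x}),\Phi^{-1}(\tilde{y}))$, $\tilde{\phi}(\tilde{x},\tilde{y}) := \phi(\Phi^{-1}(\tilde{x})-\Phi^{-1}(\tilde{y}))$, and $J(\tilde{y}) := |\det d\Phi^{-1}(\tilde{y})|$, this rewrites as
\[
T_a f(x) = \int_{\mathbb{R}^N}\int_{\mathbb{R}^N} \tilde{\phi}(\tilde{x},\tilde{y})\, a_0(\Phi^{-1}(\tilde{x}),\xi)\, J(\tilde{y})\,\tilde{f}(\tilde{y})\,e^{2\pi i \tilde{\Theta}(\tilde{x},\tilde{y})\cdot \xi}\,d\tilde{y}\,d\xi + \tilde{E}\tilde{f}(\tilde{x}),
\]
where $\tilde{E}\tilde{f}(\tilde{x}) := (Ef)(\Phi^{-1}(\tilde{x}))$ remains infinitely smoothing, its Schwartz kernel being $C^\infty$ with uniformly bounded derivatives by admissibility of $\Phi$.

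I will next verify that $\tilde{\Theta}$ is compatible with $d\Phi(\mathcal{D})$. A chain-rule computation yields $\tilde{\mathcal{B}}_j^k(\tilde{x}) = \sum_l \mathcal{B}_l^k(x)\,((d\Phi)^{-1}(x))_j^l$, and a short linear-algebra check then shows $\ker\bigl(\sum_j \tilde{\mathcal{B}}_j^N(\tilde{x})\,d\tilde{x}^j\bigr) = d\Phi(\mathcal{D}_x) = (d\Phi(\mathcal{D}))_{\tilde{x}}$, establishing (\ref{eq:compat1}); the structural bounds of Section~\ref{subsect:Theta} pass from $\Theta_0$ to $\tilde{\Theta}$ using admissibility of $\Phi$. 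The only remaining obstruction to applying Theorem~\ref{thm:Thetarep}(b) is the $\tilde{y}$-dependence of the amplitude, which enters solely through the scalar factor $J(\tilde{y})$. Taylor-expanding $J$ at $\tilde{y}=\tilde{x}$ produces monomials $(\tilde{y}-\tilde{x})^\gamma$; the invertibility of $\tilde{\mathcal{B}}(\tilde{x})$ and the implicit function theorem give, near the diagonal, a smooth factorization $\tilde{y}-\tilde{x} = \Gamma(\tilde{x},\tilde{y})\,\tilde{\Theta}(\tilde{x},\tilde{y})$, so each $(\tilde{y}-\tilde{x})^\gamma$ is a smooth combination of products of the components $\tilde{\Theta}_k$. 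Using $\tilde{\Theta}_k e^{2\pi i\tilde{\Theta}\cdot \xi} = (2\pi i)^{-1}\partial_{\xi_k} e^{2\pi i\tilde{\Theta}\cdot \xi}$ and integrating by parts in $\xi$, each Taylor term becomes a pure $\tilde{x}$-symbol carrying extra $\xi$-derivatives; by (\ref{eq:symbolclassmnequiv}) and (\ref{eq:symbol_inclusion}) this lowers the order, and a Borel summation assembles an asymptotic symbol $\tilde{a}_0 \in S^{m,n}(\mathcal{D}^0)$ (with the $\tilde{x}$-estimates on $a_0(\Phi^{-1}(\tilde{x}),\xi)$ coming from the chain rule and admissibility of $\Phi$), the discrepancy being infinitely smoothing and absorbable into $\tilde{E}$.

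A final application of Theorem~\ref{thm:Thetarep}(b) on the $\tilde{x}$-side, with phase $\tilde{\Theta}$ and symbol $\tilde{a}_0$, then yields the desired $\tilde{a} \in S^{m,n}(d\Phi(\mathcal{D}))$ with $T_{\tilde{a}}\tilde{f}(\tilde{x}) = T_a f(x)$. The hardest part of the plan is the amplitude-to-symbol reduction just sketched: one must verify that each integration by parts in $\xi$ truly lowers the order in both $m$ and $n$ in the manner dictated by (\ref{eq:symbolclassmn}), and that the formal series can be summed modulo $S^{-\infty}$ inside $S^{m,n}(\mathcal{D}^0)$. The variable-coefficient phase $\tilde{\Theta}$ and the anisotropic scaling in (\ref{eq:symbolclassmnequiv}) make the associated bookkeeping delicate, and this is where the bulk of the technical work will lie.
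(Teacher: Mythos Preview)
Your approach is viable but takes a different and longer route than the paper's. The paper never passes through Theorem~\ref{thm:Thetarep}: it keeps the standard phase $e^{2\pi i(x-y)\cdot\xi}$, performs the change of variables $\tilde{x}=\Phi(x)$, $\tilde{y}=\Phi(y)$ \emph{together with} a linear change in $\xi$ (namely $\xi\mapsto(\mathcal{L}_{\tilde{x},\tilde{x}-\tilde{y}}^{-1})^t\xi$, where $\Psi(\tilde{x})-\Psi(\tilde{y})=\mathcal{L}_{\tilde{x},\tilde{x}-\tilde{y}}(\tilde{x}-\tilde{y})$ and $\Psi=\Phi^{-1}$), so that the new phase is again the standard $e^{2\pi i(\tilde{x}-\tilde{y})\cdot\xi}$. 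The entire $\tilde{y}$-dependence then sits in a single amplitude $\tilde{c}(\tilde{x},\tilde{y},\xi)$, which is recognised as a compound symbol in $CS^{m,n}(d\Phi(\mathcal{D}))$, and one appeal to Theorem~\ref{thm:compoundtosymbol} finishes. Your route uses Theorem~\ref{thm:Thetarep} twice plus a hand-built amplitude reduction; since Theorem~\ref{thm:Thetarep} is itself proved via Theorem~\ref{thm:compoundtosymbol}, and since the step you flag as ``hardest'' is exactly the reduction that Theorem~\ref{thm:compoundtosymbol} packages, you are in effect invoking the same engine three times rather than once. What your route does buy is the pleasant geometric check that $\tilde{\Theta}=\Theta_0\circ(\Phi^{-1}\times\Phi^{-1})$ is compatible with $d\Phi(\mathcal{D})$; the paper's argument hides this inside the verification that $\tilde{c}\in CS^{m,n}(d\Phi(\mathcal{D}))$.

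There is also a concrete gap in your amplitude-reduction sketch. When you factor $\tilde{y}-\tilde{x}=\Gamma(\tilde{x},\tilde{y})\,\tilde{\Theta}(\tilde{x},\tilde{y})$ and integrate by parts in $\xi$, the matrix $\Gamma$ still depends on $\tilde{y}$, so the assertion that ``each Taylor term becomes a pure $\tilde{x}$-symbol'' is false as stated: one must iterate (Taylor-expand the new $\tilde{y}$-dependent coefficients, convert again, integrate by parts again) and then control the remainder at each finite stage. That remainder is an oscillatory integral with $\tilde{y}$-dependent amplitude and nonlinear phase $\tilde{\Theta}\cdot\xi$---precisely the situation the compound-symbol machinery is designed for. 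A cleaner variant of your idea is to Taylor-expand $J(\tilde{y})\tilde{\phi}$ directly in powers of $\tilde{\Theta}(\tilde{x},\tilde{y})$ (legitimate since $\tilde{y}\mapsto\tilde{\Theta}(\tilde{x},\tilde{y})$ is a local diffeomorphism near the diagonal), which gives coefficients that genuinely depend only on $\tilde{x}$; but you still owe the remainder estimate and a Borel-type summation lemma inside $S^{m,n}(\mathcal{D}^0)$, neither of which the paper provides separately because Theorem~\ref{thm:compoundtosymbol} absorbs both.
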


It is convenient to introduce compound symbols in the proof of these theorems, to which we now turn.

\subsection{Compound symbols} \label{subsect:compound}

To define compound symbols, we proceed in two steps. First we consider a class of preliminary compound symbols, which we denote by $PCS^{m,n}$. Then we pass to the full class of compound symbols $CS^{m,n}$. Here $(m,n)$ denote the orders of the symbols.

To begin with, we extend the $D_{\xi}$ and $D_i$ defined earlier, so that they become differential operators on functions of $(x,y,\xi) \in \mathbb{R}^N \times \mathbb{R}^N \times \mathbb{R}^N$. To do so, let
$$
D_i = \frac{\partial}{\partial y^i} + \frac{\partial}{\partial x^i} + \sum_{k=1}^N \sum_{p=1}^N \sum_{l=1}^N \frac{\partial B_k^p}{\partial x^i}(x) A_p^l(x) \xi_l \frac{\partial}{\partial \xi_k}.
$$
This is consistent with the old notion of $D_i$ as in (\ref{eq:Didef}), since if this new $D_i$ acts on a function that is independent of $y$, then its action is just the same as that of the old $D_i$. We will still write
$$
D_{\xi} = \sum_{j=1}^N B_j^N(x) \frac{\partial}{\partial \xi_j}
$$ 
as in (\ref{eq:Dxidef}), and let it differentiate a function of $x,y,\xi$  without differentiating the $y$ variable. 

We will say that a function $\rho_{x,y}(\xi)$, defined for $x,y \in \mathbb{R}^N$ and all $\xi \in \mathbb{R}^N$, is \textbf{compatible} with $\rho_x(\xi)$, if
\begin{equation} \label{eq:rhoxycond}
|\rho_{x,y}(\xi)-\rho_x(\xi)| \leq C |x-y||\xi| \quad \text{for all $x,y, \xi$.}
\end{equation} 
For example, $\rho_{x,y}(\xi) = \rho_x(\xi)$ will do, and so will $\rho_{x,y}(\xi) = \rho_y(\xi)$.

Now given two real numbers $m$ and $n$, 
suppose $c(x,y,\xi)$ is a finite sum of functions $c_i(x,y,\xi)$, and suppose for each $i$, there exist $k$ real numbers $n_1, \dots, n_k$ with $n_1 + \dots + n_k = n$, and $k$ functions $\rho^{(1)}_{x,y}(\xi), \dots, \rho^{(k)}_{x,y}(\xi)$, each of which is compatible with $\rho_x(\xi)$, such that
\begin{equation} \label{eq:restcompsymb2}
|\partial_{\xi}^{\alpha} \partial_y^{\gamma} \partial_x^{\delta} c_i(x,y,\xi)| 
\lesssim_{\alpha,\gamma,\delta} (1+|\xi|)^{m+\frac{|\gamma|+|\delta|-|\alpha|}{2}} \prod_{j=1}^k (1+\rho^{(j)}_{x,y}(\xi)+|\xi|^{1/2})^{n_j}.
\end{equation}
Then we say $c(x,y,\xi)$ is a \textbf{preliminary compound symbol} of order $(m,n)$, and we write $c \in PCS^{m,n}$.

A \textbf{compound symbol} of order $(m,n)$ is then a function $c(x,y,\xi) \in C^{\infty}(\mathbb{R}^N \times \mathbb{R}^N \times \mathbb{R}^N)$ such that for each multiindices $\beta$ and $I$, we have a decomposition of $\partial_{\xi}^{\alpha} D_{\xi}^{\beta} D^I c(x,y,\xi)$ into a finite sum
\begin{equation} \label{eq:compoundexpansion}
\partial_{\xi}^{\alpha} D_{\xi}^{\beta} D^I c(x,y,\xi) = \sum_{\sigma} (x-y)^{\sigma} c_{\sigma}^{\alpha,\beta,I}(x,y,\xi)
\end{equation}
where each $$c_{\sigma}^{\alpha,\beta,I} \in PCS^{m-|\beta|+\frac{|\sigma|}{2},n-|\alpha|}.$$ 
We denote by $CS^{m,n}$ the class of compound symbols of order $(m,n)$. 



Given $c(x,y,\xi) \in CS^{m,n}$, we define
\begin{align*}
T_c f(x) &= \int_{\mathbb{R}^N} \int_{\mathbb{R}^N} c(x,y,\xi) f(y) e^{2\pi i (x-y) \cdot \xi} d\xi dy 
\end{align*} 
for all $f \in \s$. This makes sense if $c(x,y,\xi)$ is compactly supported in $y$ and $\xi$; if it is not, then we approximate $c(x,y,\xi)$ by symbols that are. Our main result about $T_c$ is the following theorem.

\begin{thm} \label{thm:compoundtosymbol}
If $c(x,y,\xi) \in CS^{m,n}$, then there exists a symbol $a(x,\xi) \in S^{m,n}$ such that $$T_c f = T_a f$$ for all $f \in \s$.
\end{thm}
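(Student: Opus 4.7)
My approach is the classical reduction of amplitudes to left symbols, adapted to the mixed-homogeneity setting. Set formally
\[
a(x,\xi) = \iint c(x,y,\xi+\eta)\, e^{2\pi i (x-y)\cdot \eta}\, dy\, d\eta,
\]
make sense of this as an oscillatory integral, show $a\in S^{m,n}$, and verify by Fourier inversion that $T_a f = T_c f$ for $f \in \mathcal{S}(\mathbb{R}^N)$. A standard cutoff in $(y,\xi)$ reduces matters to the case where $c$ is compactly supported, where the integral converges absolutely; one then passes to the limit using uniform seminorm bounds for $a$ (equivalently, convergence in every weaker class $S^{m',n'}$ with $m'>m$, $n'>n$).

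The heart of the proof is showing $a\in S^{m,n}$. I would work via the equivalent flat characterization (\ref{eq:symbolclassmnequiv}) in terms of ordinary $\partial_x$, $\partial_{\xi'}$, $\partial_{\xi_N}$ derivatives; differentiating under the integral sign, this reduces to estimating the corresponding derivatives of $c$. Here the design of $CS^{m,n}$ pays off directly: by (\ref{eq:compoundexpansion}), every such derivative of $c$ can be expressed as a finite sum $\sum_\sigma (x-y)^\sigma c_\sigma(x,y,\xi)$ with $c_\sigma \in PCS^{\cdot + |\sigma|/2,\, \cdot}$. The key integration by parts is
\[
(x-y)^\sigma e^{2\pi i (x-y)\cdot \eta} = (2\pi i)^{-|\sigma|}\,\partial_\eta^{\sigma} e^{2\pi i (x-y)\cdot \eta},
\]
which moves $|\sigma|$ $\eta$-derivatives onto $c_\sigma(x,y,\xi+\eta)$. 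From (\ref{eq:restcompsymb2}), each $\eta$-derivative improves the isotropic weight by $(1+|\xi+\eta|)^{-1/2}$, exactly cancelling the $|\sigma|/2$ cost built into the order of $c_\sigma$; after this accounting, the integrand has size at most $(1+|\xi+\eta|)^{m-|\beta|}\,(1+\rho_{x,y}(\xi+\eta)+|\xi+\eta|^{1/2})^{n-|\alpha|}$, where $(\alpha,\beta,I)$ are the multiindices of $\partial_\xi$, $D_\xi$, $D^I$ applied to $a$.

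To turn this pointwise bound into the symbol estimate for $a$, I would split the $\eta$-integral dyadically. On $|\eta|\lesssim 1+|\xi|$ one recovers the target weight $(1+|\xi|)^{m-|\beta|}(1+\rho_x(\xi)+|\xi|^{1/2})^{n-|\alpha|}$ after replacing $\rho_{x,y}$ by $\rho_x$ via the compatibility (\ref{eq:rhoxycond}); the $y$-integral is rendered absolutely convergent by extra integrations by parts in $\eta$ using $(1-\Delta_\eta)/(1+4\pi^2|x-y|^2)$. On $|\eta|\gg 1+|\xi|$, symmetric integrations by parts in $y$ using $(1-\Delta_y)/(1+4\pi^2|\eta|^2)$ produce arbitrary decay in $|\eta|$, giving a negligible contribution. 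Finally, $T_a f = T_c f$ follows by interchanging the $\eta$- and $\xi$-integrals in $T_a f(x) = \int a(x,\xi)\widehat f(\xi) e^{2\pi i x\cdot\xi}d\xi$ and applying Fourier inversion, legitimate once $c$ has been regularized.

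\textbf{Main obstacle.} The principal difficulty lies in the derivative bookkeeping of the previous two steps: confirming that every $\partial_\xi^\alpha D_\xi^\beta D^I a$ is controlled precisely by the mixed weight $(1+|\xi|)^{m-|\beta|}(1+\rho_x(\xi)+|\xi|^{1/2})^{n-|\alpha|}$ demanded by (\ref{eq:symbolclassmn}). Because in (\ref{eq:restcompsymb2}) the non-isotropic $\rho$-weight does not improve under any derivative, every order reduction must be financed on the isotropic side. The class $CS^{m,n}$ is engineered so that the half-integer gains from integrating by parts against $(x-y)^\sigma$ exactly match the half-integer losses encoded in the shift of the $c_\sigma$, while the oscillation in the $\eta$-integral concentrates the weight at $\eta=0$ and so replaces $\rho_{x,y}(\xi+\eta)$ by $\rho_x(\xi)$ up to admissible errors. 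Making this concentration quantitative—and carefully tracking how many times one integrates by parts in each variable in each dyadic annulus—is the central technical point.
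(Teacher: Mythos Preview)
Your overall architecture matches the paper's: the same formula for $a$, the same reduction to compact support, the same use of the expansion (\ref{eq:compoundexpansion}) together with the integration by parts $(x-y)^\sigma e^{2\pi i(x-y)\cdot\eta}=(2\pi i)^{-|\sigma|}\partial_\eta^\sigma e^{2\pi i(x-y)\cdot\eta}$ to reduce each derivative $\partial_\xi^\alpha D_\xi^\beta D^I a$ to an expression of the form $[c']$ with $c'\in PCS^{m-|\beta|,\,n-|\alpha|}$. What remains is exactly the paper's Proposition~\ref{prop:restrictedcompoundtosymbol}: for $c\in PCS^{m,n}$, show $|[c](x,\xi)|\lesssim(1+|\xi|)^m(1+\rho_x(\xi)+|\xi|^{1/2})^n$.

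The gap is in how you propose to carry out that last step. The operators you write down, $(1-\Delta_\eta)/(1+4\pi^2|x-y|^2)$ and $(1-\Delta_y)/(1+4\pi^2|\eta|^2)$, are at the \emph{wrong scale} to recover the non-isotropic weight. On the main region $|\eta|\le|\xi|/2$, your denominator $(1+|x-y|^2)^{-M}$ concentrates the $y$-integral only at $|x-y|\lesssim 1$, whereas the compatibility condition (\ref{eq:rhoxycond}) gives $|\rho_{x,y}(\xi+\eta)-\rho_x(\xi+\eta)|\le C|x-y||\xi+\eta|$, an error of size $|\xi|$ when $|x-y|\sim 1$; this swamps the target $\rho_x(\xi)+|\xi|^{1/2}$. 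Likewise there is no decay in $\eta$ on this region, so $|\rho_x(\xi+\eta)-\rho_x(\xi)|\lesssim|\eta|$ can be as large as $|\xi|$. The paper's fix is to use instead the \emph{scaled} self-adjoint operator
\[
\left(\frac{I-|\xi|^{-1}\Delta_w-|\xi|\Delta_\zeta}{1+4\pi^2|\xi|\,|w|^2+4\pi^2|\xi|^{-1}|\zeta|^2}\right)^M,
\]
whose numerator is bounded on $PCS^{m,n}$ (two $\partial_y$'s cost $(1+|\xi|)$, cancelled by $|\xi|^{-1}$; two $\partial_\xi$'s gain $(1+|\xi|)^{-1}$, cancelled by $|\xi|$), and whose denominator concentrates the integral at $|w|\lesssim|\xi|^{-1/2}$, $|\zeta|\lesssim|\xi|^{1/2}$. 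At \emph{those} scales the replacement $\rho_{x,x-w}(\xi-\zeta)\to\rho_x(\xi)$ goes through, via the explicit two-sided comparisons
\[
1+\rho_{x,x-w}(\xi-\zeta)+|\xi|^{1/2}\ \lesssim\ (1+\rho_x(\xi)+|\xi|^{1/2})(1+|\xi|^{-1/2}|\zeta|+|\xi|^{1/2}|w|)
\]
and its reciprocal analogue for $n<0$, whose extra factors are then absorbed by the scaled denominator. Your sketch correctly flags this ``concentration'' as the central technical point, but the concrete mechanism you propose does not achieve it; the $|\xi|^{\pm 1/2}$ rescaling of the integration-by-parts operator is the missing idea.
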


We may now sketch the proofs of Theorems~\ref{thm:compose}, \ref{thm:adjoint},  \ref{thm:Thetarep} and \ref{thm:admisscoord}.

\begin{proof}[Proof of Theorem~\ref{thm:adjoint}]
Given $a \in S^{m,n}$, let $$c(x,y,\xi) = \overline{a(y,\xi)}.$$ Then $c(x,y,\xi)$ is a compound symbol in $CS^{m,n}$; the relevant $\rho_{x,y}(\xi)$ in this case is given by $$\rho_{x,y}(\xi) := \rho_y(\xi).$$ Now $T_a^* f = T_c f$ for all $f \in \s$. Thus Theorem~\ref{thm:adjoint} follows from Theorem~\ref{thm:compoundtosymbol}.
\end{proof}

\begin{proof}[Proof of Theorem~\ref{thm:compose}]
Given $a_i \in S^{m_i,n_i}$, $i = 1,2$, write $T_{a_2} = T_{a_2^*}^*$ for some symbol $a_2^* \in S^{m_2,n_2}$ by Theorem~\ref{thm:adjoint}. Let $$c(x,y,\xi) = a_1(x,\xi) a_2^*(y,\xi).$$ Then $c(x,y,\xi)$ is a compound symbol in $CS^{m,n}$, with $m = m_1 + m_2$, $n = n_1 + n_2$; the relevant $\rho^{(j)}_{x,y}(\xi)$ in this case are given by
$$
\rho^{(1)}_{x,y}(\xi) := \rho_x(\xi), \quad \rho^{(2)}_{x,y}(\xi) := \rho_y(\xi).
$$
Now
$$T_{a_1} T_{a_2} = T_{a_1} T_{a_2^*}^* = T_c,$$
so Theorem~\ref{thm:compose} follows from Theorem~\ref{thm:compoundtosymbol}.
\end{proof}

\begin{proof}[Proof of Theorem~\ref{thm:Thetarep}]
Given a compatible $\Theta$, write
\begin{equation} \label{eq:calL}
\Theta(x,y) = \mathcal{L}_{x,x-y}(x-y)
\end{equation}
for all $(x, y) \in \Delta$, where $\mathcal{L}_{x,u}$ is a variable coefficient linear map defined for each $x \in \mathbb{R}^N$, $|u| \leq \delta$. To prove (a), we write
$$u = \mathcal{M}_x(v) \quad \text{whenever $v = \mathcal{L}_{x,u}(u)$ and $|u|$ is sufficiently small.}$$ Furthermore, given $a \in S^{m,n}(\mathcal{D})$, we claim the existence of $a_0 \in S^{m,n}(\mathcal{D}^0)$ such that 
$$
\det(\mathcal{L}_{x,\mathcal{M}_x(v)}) \int a(x,\mathcal{L}^t_{x,\mathcal{M}_x(v)} \xi) e^{2 \pi i v \cdot \xi} d\xi = \int a_0(x,\xi) e^{2\pi i v \cdot \xi} d\xi
$$
for all $x \in \mathbb{R}^N$ and $|v|$ sufficiently small. If this is true, then setting $v = \mathcal{L}_{x,x-y}(x-y)$, we have, by (\ref{eq:calL}), that
$$
\int a_0(x,\xi) e^{2\pi i \Theta(x,y) \cdot \xi} d\xi
=\int a(x,\xi) e^{2 \pi i (x-y) \cdot \xi} d\xi 
$$
for all $x, y \in \mathbb{R}^N$ with $|x-y|$ sufficiently small. The rest then follows easily.

The claim again follows from Theorem~\ref{thm:compoundtosymbol}. Let $\tilde{\phi} \in C^{\infty}_c(\mathbb{R}^N)$ be identically 1 near 0. Then 
$$
c(x,y,\xi):= \tilde{\phi}(x-y) \det(\mathcal{L}_{x,\mathcal{M}_x(x-y)}) a(x,\mathcal{L}^t_{x,\mathcal{M}_x(x-y)} \xi)
$$
is a compound symbol of order $(m,n)$ adapted to the distribution $\mathcal{D}^0$, with 
$$
\rho_{x,y}(\xi) := \rho_x \left(\mathcal{L}^t_{x,\mathcal{M}_x(x-y)} \xi \right) = \left|\left(M_x \mathcal{L}^t_{x,\mathcal{M}_x(x-y)} \xi \right)'\right|.
$$
Thus there exists $a_0 \in S^{m,n}(\mathcal{D}^0)$ such that
$$
\int \int c(x,y,\xi) e^{2\pi i(x-y) \cdot \xi} d\xi = \int a_0(x,\xi) e^{2 \pi i (x-y) \cdot \xi} d\xi
$$
for all $x,y \in \mathbb{R}^N$, which implies our desired claim if we replace $x-y$ by $v$.

Conversely, given $a_0 \in S^{m,n}(\mathcal{D}^0)$, let
$$
c(x,y,\xi) := \phi(x-y) \det(\mathcal{L}_{x,x-y}^{-1}) a_0(x,(\mathcal{L}_{x,x-y}^{-1})^t \xi)
$$
where $\phi$ is a cut-off so that $\mathcal{L}_{x,x-y}$ is invertible when $\phi(x-y) \ne 0$.
Then $c(x,y,\xi) \in CS^{m,n}(\mathcal{D})$ with $$\rho_{x,y}(\xi) := |\left( (\mathcal{L}_{x,x-y}^{-1})^t \xi \right)'|.$$ Hence by Theorem~\ref{thm:compoundtosymbol}, there exists $a \in S^{m,n}(\mathcal{D})$ such that $T_a = T_c$. The rest then follows easily.
\end{proof}

\begin{proof}[Proof of Theorem~\ref{thm:admisscoord}]
Let $\phi \in C^{\infty}_c(\mathbb{R}^N)$ be a cut-off that is identically 1 near $0$. Given $a \in S^{m,n}(\mathcal{D})$, write 
$$
T_1 f(x) = \int \int \phi(x-y) a(x,\xi) f(y) e^{2\pi i (x-y) \cdot \xi} dy d\xi.
$$
Then $T_a = T_1$ modulo an infinitely smoothing operator, and the key is to show that there exists $\tilde{a}_1 \in S^{m,n}(d\Phi(\mathcal{D}))$, such that $T_1 f(x) = T_{\tilde{a}_1} \tilde{f} (\Phi(x))$. Now write $\Psi = \Phi^{-1}$, and write
$$
\Psi(\tilde{x})-\Psi(\tilde{y}) = \mathcal{L}_{\tilde{x},\tilde{x}-\tilde{y}}(\tilde{x}-\tilde{y})
$$
for some variable linear map $\mathcal{L}_{\tilde{x},\tilde{u}}$. By restricting the support of $\phi$, we may assume that $\mathcal{L}_{\tilde{x},\tilde{x}-\tilde{y}}$ is invertible whenever $\phi(\Psi(\tilde{x})-\Psi(\tilde{y})) \ne 0$. Thus if we let $\tilde{x} = \Phi(x)$, then for any $f \in \s$,
\begin{align*}
T_1 f(x) 
= \int \int \tilde{c}(\tilde{x},y,\xi) \tilde{f}(y) e^{2\pi i (\tilde{x}-y) \cdot \xi} dy d\xi
\end{align*}
where
$$
\tilde{c}(x,y,\xi) := \phi(\Psi(x)-\Psi(y)) a(\Psi(x),(\mathcal{L}_{x,x-y}^{-1})^t \xi) \det(\Psi'(y)) \det(\mathcal{L}_{x,x-y}^{-1}).
$$
\newpage \noindent
But $\tilde{c}(x,y,\xi) \in CS^{m,n}(d\Phi (\mathcal{D}))$, with 
$$
\rho_{x,y}(\xi) = \rho_{\Psi(x)} \left( (\mathcal{L}_{x,x-y}^{-1})^t \xi \right).
$$
Thus the existence of the desired $\tilde{a}_1$ follows from  Theorem~\ref{thm:compoundtosymbol}.
\end{proof}

\begin{proof}[Proof of Theorem~\ref{thm:compoundtosymbol}]
We will assume for simplicity that $c(x,y,\xi)$ has compact support in $y$ and $\xi$, and so do all the $c_{\sigma}^{\alpha,\beta,I}(x,y,\xi)$ arising in the expansion (\ref{eq:compoundexpansion}). Then $T_c f = T_a f$ for all $f \in \s$, where 
\begin{equation} \label{eq:compoundtosymbol}
a(x,\xi) = \int_{\mathbb{R}^N} \int_{\mathbb{R}^N} c(x,x-w,\xi-\zeta) e^{-2\pi i w \cdot \zeta} dw d\zeta.
\end{equation}
As a shorthand we write $$a = [c]$$ when this identity holds.
Then we write $$c(x,y,\xi) = \sum_{\sigma} (x-y)^{\sigma} c_{\sigma}(x,y,\xi) $$ and substitute into (\ref{eq:compoundtosymbol}).
Writing $-2 \pi i w e^{-2\pi i w \cdot \zeta} = \partial_{\zeta} e^{-2\pi i w \cdot \zeta}$ and integrating by parts, we get
$$a(x,\xi) = \sum_{\sigma} (2\pi i)^{-|\sigma|} [\partial_{\xi}^{\sigma} c_{\sigma}].$$
But now each $$\partial_{\xi}^{\sigma} c_{\sigma} (x,y,\xi) \in  PCS^{m,n}.$$ Thus to show the $L^{\infty}$ bound of $a(x,\xi)$, one only needs to invoke Proposition~\ref{prop:restrictedcompoundtosymbol} below. Similarly one can estimate all derivatives of $a(x,\xi)$, namely $\partial_{\xi}^{\alpha} D_{\xi}^{\beta} D^I a(x,\xi)$, in $L^{\infty}$. We omit the details.
\end{proof}

\begin{prop} \label{prop:restrictedcompoundtosymbol}
Suppose $c(x,y,\xi) \in PCS^{m,n}$ has compact support in $y$ and $\xi$. Then $a(x,\xi)$, defined by (\ref{eq:compoundtosymbol}), satisfies $$|a(x,\xi)| \lesssim (1+|\xi|)^m (1+\rho_x(\xi)+|\xi|^{1/2})^n.$$
\end{prop}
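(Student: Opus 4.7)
The plan is to estimate the oscillatory integral (\ref{eq:compoundtosymbol}) via integration by parts in $w$ and $\zeta$, combined with a decomposition of the $\zeta$-domain along the non-isotropic scale $1+\rho_x(\cdot)+|\cdot|^{1/2}$. As a preliminary simplification, I would substitute $\xi\mapsto M_x^{-1}\xi$, whose Jacobian is uniformly bounded above and below, to reduce matters to the model distribution $\mathcal{D}^0$ in which $\rho_x(\cdot)$ becomes $|\xi'|$ and the correct non-isotropic size is $\|\xi\|:=|\xi'|+|\xi_N|^{1/2}$. In these coordinates the PCS bound reads
\[
|\partial_\eta^\alpha \partial_y^\gamma c_0(x,y,\eta)|\lesssim (1+|\eta|)^{m+(|\gamma|-|\alpha|)/2}\prod_j(1+\rho^{(j)}_{x,y}(\eta)+|\eta|^{1/2})^{n_j},
\]
with each $\rho^{(j)}_{x,y}$ compatible with $|\eta'|$. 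On the regime $|w|\lesssim |\eta|^{-1/2}$ the compatibility condition (\ref{eq:rhoxycond}) lets us replace the product by $(1+\|\eta\|)^n$; on the complementary regime only a crude $L^\infty$ bound by a power of $|\eta|$ is needed, and it is absorbed by the $w$-decay produced below.

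The workhorse is iterated integration by parts via $(-w_j)e^{-2\pi i w\cdot\zeta}=(2\pi i)^{-1}\partial_{\zeta_j}e^{-2\pi i w\cdot\zeta}$ and $(-\zeta_j)e^{-2\pi i w\cdot\zeta}=-(2\pi i)^{-1}\partial_{w_j}e^{-2\pi i w\cdot\zeta}$. Applying $(1-(2\pi)^{-2}\Delta_\zeta)^K$ and $(1-(2\pi)^{-2}\Delta_w)^L$ to the phase, transferring them onto the amplitude, and using $\partial_w=-\partial_y$ when acting on $c_0(x,x-w,\cdot)$, the PCS bound on the worst summand $\partial_\eta^{2K}\partial_y^{2L}c_0$ yields
\[
|a(x,\xi)|\lesssim \int\!\!\int\frac{(1+|\xi-\zeta|)^{m+L-K}(1+\|\xi-\zeta\|)^n}{(1+|w|^2)^K(1+|\zeta|^2)^L}\,dw\,d\zeta.
\]
Taking $2K>N$ makes the $w$-integral uniformly convergent.

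The remaining $\zeta$-integral is split along the non-isotropic scale $\|\xi\|$. On the \emph{near} piece $\{\|\zeta\|\leq \|\xi\|/2\}$, a short case analysis on whether $|\xi'|^2$ or $|\xi_N|$ dominates gives $|\xi-\zeta|\sim|\xi|$ and $\|\xi-\zeta\|\sim \|\xi\|$ (for $|\xi|$ large), and so this region contributes at most $(1+|\xi|)^{m+L-K}(1+\|\xi\|)^n$ times the non-isotropic ball volume $\sim\|\xi\|^{N+1}$; since $\|\xi\|\lesssim 1+|\xi|$, choosing $K-L$ sufficiently large (at least $N+1$) absorbs the volume into the target factor $(1+|\xi|)^m$. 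On the \emph{far} piece $\{\|\zeta\|>\|\xi\|/2\}$ one has $\|\xi-\zeta\|\lesssim \|\zeta\|$, and for $L$ sufficiently large relative to $m,n,N$ the weight $(1+|\zeta|^2)^{-L}$ overwhelms the growth of $(1+|\xi-\zeta|)^{m+L-K}(1+\|\xi-\zeta\|)^n$, producing a contribution of the desired size. The main obstacle is that PCS $\xi$-derivatives improve only the isotropic factor $(1+|\eta|)^m$ and never the non-isotropic factor $(1+\|\eta\|)^n$, so the sharp $\|\xi\|$-dependence of the final bound cannot be produced by further integration by parts; it has to be captured by the geometric splitting above and a careful comparison of $\|\xi-\zeta\|$ with $\|\xi\|$ in each region.
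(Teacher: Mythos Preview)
Your argument has two concrete failures. First, transferring $(1-(2\pi)^{-2}\Delta_\zeta)^K(1-(2\pi)^{-2}\Delta_w)^L$ onto the amplitude produces a \emph{sum} of terms $\partial_\eta^\alpha\partial_y^\gamma c_0$ over all $0\le|\alpha|\le 2K$, $0\le|\gamma|\le 2L$, and the PCS estimate assigns each the exponent $m+(|\gamma|-|\alpha|)/2$ on $(1+|\xi-\zeta|)$. The summand you label ``worst'' ($|\alpha|=2K$, $|\gamma|=2L$) is in fact the \emph{best} one for large $|\xi-\zeta|$; the actual worst is $|\alpha|=0$, $|\gamma|=2L$, with exponent $m+L$, which is insensitive to $K$. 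So taking $K-L$ large cannot absorb the ball volume for this term, and your displayed bound with exponent $m+L-K$ is not what the operator delivers. Second, the non-isotropic near region $\{\|\zeta\|\le\|\xi\|/2\}$ does \emph{not} force $|\xi-\zeta|\sim|\xi|$: for $\xi=(R,0,\dots,0)$ and $\zeta=(0,\dots,0,R^2/16)$ one has $\|\zeta\|=R/4$ but $|\xi-\zeta|\sim R^2$. Your case analysis breaks exactly when $|\xi'|^2>|\xi_N|$, since $\|\zeta\|\lesssim|\xi'|$ still permits $|\zeta_N|$ of size $|\xi'|^2\gg|\xi|$.

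Both issues have the same root: the amplitude is of type $(1/2,1/2)$ (each $\partial_y$ costs $|\eta|^{1/2}$, each $\partial_\eta$ gains only $|\eta|^{-1/2}$), and unscaled Laplacians are mis-adapted to this. The paper's remedy is to localize \emph{isotropically} to $|\zeta|\le|\xi|/2$ (so $|\xi-\zeta|\sim|\xi|$ is automatic) and integrate by parts with the \emph{scaled} operator $\bigl(I-|\xi|^{-1}\Delta_w-|\xi|\Delta_\zeta\bigr)\big/\bigl(1+4\pi^2|\xi|\,|w|^2+4\pi^2|\xi|^{-1}|\zeta|^2\bigr)$, whose numerator acts on $c$ with no net loss and whose denominator is integrable in the rescaled variables $(|\xi|^{1/2}w,\,|\xi|^{-1/2}\zeta)$. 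The non-isotropic factor $(1+\rho_x(\xi)+|\xi|^{1/2})^n$ then arises not from a $\zeta$-splitting but from the pointwise comparison $|\rho_{x,x-w}(\xi-\zeta)-\rho_x(\xi)|\lesssim|\zeta|+|w|\,|\xi|=|\xi|^{1/2}\bigl(|\xi|^{-1/2}|\zeta|+|\xi|^{1/2}|w|\bigr)$, which is absorbed by the scaled denominator.
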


\begin{proof}
Heuristically, one performs a Taylor expansion in $\zeta$ in (\ref{eq:compoundtosymbol}). More precisely, suppose $|\xi| \geq 1$. Then we choose a smooth function $\phi$ on $\mathbb{R}$ that is supported on $(-1,1)$, and that is identically equal to 1 on $(-1/2,1/2)$, so that $a(x,\xi)$ is equal to
\begin{equation} \label{eq:PCSphi}
\int \int \phi \left( \frac{|\zeta|}{|\xi|/2} \right) c(x, x-w, \xi-\zeta) e^{-2\pi i w \cdot \zeta} dw d\zeta
\end{equation}
up to an error that is rapidly decreasing in $\xi$. To estimate (\ref{eq:PCSphi}), we write the exponential in the integral as
$$
e^{-2\pi i w \cdot \zeta} = \left( \frac{I - |\xi|^{-1} \Delta_w - |\xi| \Delta_{\zeta}}{1 + 4\pi^2 |\xi| |w|^2 + 4\pi^2 |\xi|^{-1} |\zeta|^2} \right)^{M} e^{-2\pi i w \cdot \zeta},
$$
and integrate by parts. On the support of this integral, $|\zeta| \leq |\xi|/2$, so in particular $|\xi-\zeta| \simeq |\xi|$. Using this, and estimates on the derivatives of $c$, we see that (\ref{eq:PCSphi}) is bounded by 
\begin{align*}
\int_{|\zeta| \leq |\xi|/2} \int  \frac{(1+|\xi|)^{m} (1+\rho_{x,x-w}(\xi-\zeta)+|\xi|^{1/2})^{n}}{(1+4\pi^2 |\xi| |w|^2 + 4\pi^2 |\xi|^{-1} |\zeta|^2)^{M}} dw d\zeta
\end{align*}
if $c$ satisfies (\ref{eq:restcompsymb2}) with $k=1$.
Now if $n \geq 0$, on the support of $II$, we bound
\begin{equation} 
1+\rho_{x,x-w}(\xi-\zeta)+|\xi|^{1/2}
\lesssim (1+\rho_x(\xi) + |\xi|^{1/2}) (1 + |\xi|^{-1/2} |\zeta| +  |\xi|^{1/2} |w|).
\end{equation}
If $n < 0$, on the support of $II$, we use the bound 
\begin{equation} 
\frac{1}{1+\rho_{x,x-w} (\xi-\zeta)+|\xi|^{1/2}} \lesssim \frac{(1+|\xi|^{-1/2}|\zeta|) (1+|\xi|^{1/2} |w|)}{1+\rho_x(\xi)+|\xi|^{1/2}}.
\end{equation}
Altogether, (\ref{eq:PCSphi}) is bounded by $(1+|\xi|)^m (1+\rho_x(\xi)+|\xi|^{1/2})^n.$
One can easily adapt this argument if $c$ satisfies (\ref{eq:restcompsymb2}) for a general $k$, or if $c$ is a sum of terms each satisfying (\ref{eq:restcompsymb2}). Thus we are done in this case. The case when $|\xi| \leq 1$ is much easier.
\end{proof}

\section{Kernel representations}

In this section, we describe differential inequalities for the kernels of pseudodifferential operators arising from symbols of class $S^{m,n}$. We also describe a partial converse, which states the extent to which these differential inequalities on the kernels characterize the pseudodifferential operators. We assume throughout this section that 
$$m > -1, \quad n > -(N-1).$$

\subsection{The good derivatives on $\mathbb{R}^N$}

In order to formulate our kernel estimates, we need to identify some good derivatives on $\mathbb{R}^N$, which are in some sense dual to the ones we have in Section~\ref{subsect:goodcotder} and~\ref{subsect:compound}. 

First, the $X_1, \dots, X_{N-1}$, which we introduced in Section~\ref{subsect:assump}, are the good $x$-derivatives on $\mathbb{R}^N$. When we want to distinguish derivatives in the $x$ and the $y$ variables, we put a subscript of $x$ or $y$ respectively. e.g.
$$
(X_i)_y := \sum_{j=1}^N A_i^j(y) \frac{\partial}{\partial y^j}.
$$
We write $X'$ for any of the $X_1, \dots, X_{N-1}$'s.

We need $N$ further derivatives in both $x$ and $y$, which we denote by $D_{x,y,i}$, $i=1,\dots,N$. Here $D_{x,y,i}$ is the unique differential operator of the form 
$\frac{\partial}{\partial x^i} + \sum_{j=1}^N b_{ij}(x,y) \frac{\partial}{\partial y^j},$
such that
$$
D_{x,y,i} [F(\Theta_0(x,y))] = 0 \quad \text{for all functions $F$, and all $x$, $y$}.
$$
It follows that
$$
D_{x,y,i} = \frac{\partial}{\partial x^i} + \frac{\partial}{\partial y^i} + \sum_{k=1}^N \sum_{p=1}^N \sum_{l=1}^N \frac{\partial B_k^p}{\partial x^i}(x) A_p^l(x) (x^k-y^k) \frac{\partial}{\partial y^l}.
$$
If $I = (i_1, \dots, i_k)$ with each $i_j \in \{1, \dots, N\}$, we also write $D_{x,y}^I = D_{x,y,i_1} \dots D_{x,y,i_k}$.

\subsection{Main theorems}

\begin{thm} \label{thm:KernelTheta}
Suppose $m > -1$, $n > -(N-1)$. Let $a \in S^{m,n}$. Then there exists a distribution $K(x,y)$ on $\mathbb{R}^N \times \mathbb{R}^N$, such that the following holds:
\newpage \noindent
\begin{enumerate}[(a)]
\item \label{7a} We have $$T_a f(x) = \int_{\mathbb{R}^N} K(x,y) f(y) dy$$ for all $f \in \s$, in the sense that
$$\langle T_a f, g \rangle = \left \langle K(x,y) , f(y) g(x) \right \rangle$$
for all $f, g \in \s$.
\item \label{7b} The distribution $K(x,y)$ is smooth away from the diagonal $\{x=y\}$, and satisfies the differential inequalities
\begin{equation} \label{eq:kernelest1}
|D_{x,y}^I (X'_{x,y})^{\gamma} \partial_{x,y}^{\lambda} K(x,y)| \lesssim \frac{1}{|x-y|^{(N-1)+n+|\gamma|+M} \,  d(x,y)^{2(1+m+|\lambda|)}}
\end{equation}
for all $M \geq 0$, where by $X'_{x,y}$ we mean the derivative can be either with respect to $x$ or $y$; similarly for $\partial_{x,y}$.
\end{enumerate}
\end{thm}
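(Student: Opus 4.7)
Part (a) is immediate: since $T_a$ acts continuously on $\s$, the Schwartz kernel theorem produces a tempered distribution $K(x,y)$ realizing the pairing in the statement. For (b), the plan is to invoke Theorem~\ref{thm:Thetarep} with the compatible map $\Theta := \Theta_0$, which gives some $a_0 \in S^{m,n}(\mathcal{D}^0)$ and a cutoff $\phi$ such that
$$
K(x,y) = \phi(x-y) \int_{\mathbb{R}^N} a_0(x,\xi)\, e^{2\pi i \Theta_0(x,y)\cdot\xi}\, d\xi + e(x,y),
$$
where $e \in C^\infty(\mathbb{R}^N \times \mathbb{R}^N)$ is bounded together with all its derivatives and so satisfies (\ref{eq:kernelest1}) trivially. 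For $x \ne y$ with $|x-y|$ small, repeated integration by parts in $\xi$ against the non-vanishing phase $\Theta_0(x,y)$ shows the oscillatory integral is smooth. The remaining task is to bound this integral, call it $I(x,y)$, and its derivatives near the diagonal.

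For the pointwise estimate of $I(x,y)$, I would introduce a bi-parameter dyadic partition of unity $\{\chi_{j,k}\}_{j,k\geq 0}$ of $\xi$-space adapted to the two natural scales $|\xi'| \sim 2^j$ and $|\xi_N|^{1/2} \sim 2^k$ (with a suitable low-frequency piece), and decompose $I = \sum_{j,k} I_{j,k}$ where
$$
I_{j,k}(x,y) := \int_{\mathbb{R}^N} a_0(x,\xi)\chi_{j,k}(\xi)\, e^{2\pi i \Theta_0(x,y)\cdot\xi}\, d\xi.
$$
On the support of $\chi_{j,k}$ one has $|\xi| \sim 2^{\max(j,2k)}$ and $\|\xi\| \sim 2^{\max(j,k)}$, so $|a_0| \lesssim 2^{m\max(j,2k)+n\max(j,k)}$, with analogous bounds on its derivatives. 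Integration by parts in $\xi'$ gains $|(\Theta_0(x,y))'|^{-1} \sim |x-y|^{-1}$ per step and costs $\|\xi\|^{-1}$ from $\partial_{\xi'}a_0$; IBP in $\xi_N$ gains $|\Theta_0(x,y)_N|^{-1}$ per step and costs $(1+|\xi|)^{-1}$. Choosing the numbers of IBPs in each variable adaptively to the dyadic cell, and using that $|(\Theta_0(x,y))'| \sim |x-y|$ while $|\Theta_0(x,y)_N|^{1/2}$ is controlled by $d(x,y)$, the dyadic series sums to
$$
|I(x,y)| \lesssim \frac{1}{|x-y|^{(N-1)+n}\, d(x,y)^{2(1+m)}}.
$$
The hypotheses $m > -1$ and $n > -(N-1)$ enter precisely to guarantee absolute convergence of the dyadic sums at low frequencies.

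For the full derivative bound in (\ref{eq:kernelest1}) I would use the defining property of $D_{x,y,i}$: by construction $D_{x,y,i}[F(\Theta_0(x,y))]=0$ for every $F$, so it annihilates $e^{2\pi i \Theta_0(x,y) \cdot \xi}$, passes inside the integral, and differentiates only $a_0$, producing a new symbol in $S^{m,n}(\mathcal{D}^0)$; hence $D_{x,y}^I$ contributes no loss. A derivative $X'_{x,y}$ applied to the phase brings down a factor of size $\rho_x(\xi)\sim \|\xi\|$ (plus, in the $y$-version, an $O(|x-y|)\cdot|\xi|$ remainder absorbable by $d(x,y)$), which by the same bi-parameter analysis yields an extra $|x-y|^{-1}$ per derivative; applied to $a_0$ it keeps it in $S^{m,n}$. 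Each unrestricted $\partial_{x,y}$ derivative on the phase brings down $|\xi|$, which costs $d(x,y)^{-2}$ per derivative. The main obstacle is the bookkeeping in the bi-parameter dyadic sum: the scales $|\xi|$ and $\|\xi\|$ are genuinely different, so the optimal IBP direction depends on the dyadic cell, and the spatial regimes $|x-y|^2 \gtrsim |\Theta_0(x,y)_N|$ and $|x-y|^2 \ll |\Theta_0(x,y)_N|$ must be combined into a single uniform estimate in $|x-y|$ and $d(x,y)$. The free parameter $M \geq 0$ appearing in the exponent of $|x-y|$ simply reflects that one can always integrate by parts once more in $\xi'$, trading decay in $d(x,y)$ for additional decay in $|x-y|$.
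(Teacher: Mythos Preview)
Your reduction is the same as the paper's: both pass to $a_0 \in S^{m,n}(\mathcal{D}^0)$ via the $\Theta_0$-representation (the paper states this as ``the crux of the matter is the case where the distribution at hand is $\mathcal{D}^0$''), and both treat the derivatives $D_{x,y}^I$, $X'_{x,y}$, $\partial_{x,y}$ exactly as you describe. The difference is in how the oscillatory integral is organized. The paper does \emph{not} decompose $\xi$-space; instead it fixes $u = (u',u'')$ (with $u' = \Theta_0(x,y)'$, $u'' = \Theta_0(x,y)_N$) and splits the \emph{physical} variable into three regions: $|u'| < |u''|$, $|u'| > |u''|^{1/2}$, and the intermediate zone $|u''| \le |u'| \le |u''|^{1/2}$, identifying the last as the critical one. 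In the two outer regions one of the scales $|x-y|$, $d(x,y)$ dominates the other, so the $\xi$-integral reduces to an essentially single-parameter (isotropic or non-isotropic) stationary-phase/IBP estimate; only in the intermediate region is the genuinely two-parameter behavior visible. Your bi-parameter dyadic decomposition in $\xi$ is the Fourier-dual organization of the same computation: the cells with $j \ge 2k$, $j \le k$, and $k \le j \le 2k$ correspond, after summation, to the paper's three $u$-regions. Both routes are correct; the paper's trichotomy localizes the difficulty to a single region and avoids the double-sum bookkeeping you flag as the ``main obstacle,'' while your approach is more uniform and makes the role of the hypotheses $m > -1$, $n > -(N-1)$ (low-frequency summability) slightly more transparent.
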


Note that the `isotropic' index $m$, and the `non-isotropic' index $n$, of the symbols in $S^{m,n}$, now appear in reverse roles of in the estimates of the kernels.

There is a partial converse of the above theorem:

\begin{thm} \label{thm:KernelThetaEasyConverse}
Assume in addition $$m < 0, \quad n < 0.$$  Suppose $K(x,y)$ is a distribution on $\mathbb{R}^N \times \mathbb{R}^N$, and it satisfies part (\ref{7b}) of Theorem~\ref{thm:KernelTheta}. Then there exists $a \in S^{m,n}$, such that part (\ref{7a}) of Theorem~\ref{thm:KernelTheta} holds.
\end{thm}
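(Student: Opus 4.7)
The strategy is Fourier inversion of the kernel in the second variable, after straightening out the distribution $\mathcal{D}$ via the compatible map $\Theta_0(x,y) = L_x(x-y)$. By Theorem~\ref{thm:Thetarep}(b) applied with $\Theta = \Theta_0$, it is enough to produce $a_0 \in S^{m,n}(\mathcal{D}^0)$ whose $\Theta_0$-representation has kernel $K(x,y)$ modulo an infinitely smoothing error; Theorem~\ref{thm:Thetarep}(b) will then convert $a_0$ into the desired $a \in S^{m,n}(\mathcal{D})$.

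Introduce the change of variable $u = \Theta_0(x,y) = L_x(x-y)$ and set $\tilde{K}(x,u) := K(x, x - L_x^{-1} u)$. A direct chain-rule computation translates the differential inequalities of Theorem~\ref{thm:KernelTheta}(b) for $K$ into analogous ones for $\tilde{K}(x,u)$, relative to the flat distribution $\mathcal{D}^0$: for all multiindices $I, \alpha, \beta$ and all $M \geq 0$,
$$
|\partial_x^I \partial_{u'}^{\alpha} \partial_{u_N}^{\beta} \tilde{K}(x,u)|
\;\lesssim\; |u|^{-(N-1)-n-|\alpha|-M}\, \bigl(|u'|+|u_N|^{1/2}\bigr)^{-2(1+m)-2|\beta|}.
$$
Define the candidate symbol by a cut-off $u$-Fourier transform,
$$
a_0(x,\xi) := \int_{\mathbb{R}^N} \phi(u)\, \tilde{K}(x,u)\, e^{-2\pi i u \cdot \xi}\, du,
$$
where $\phi \in C^{\infty}_c(\mathbb{R}^N)$ equals $1$ near $u=0$ and is supported where $\Theta_0$ is a diffeomorphism in $y$. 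The assumptions $m<0$ and $n<0$ ensure (by a short computation in non-isotropic polar coordinates) that $\phi \tilde{K}(x,\cdot)$ is integrable in $u$ uniformly in $x$, so $a_0$ is a well-defined continuous function; meanwhile $(1-\phi)\tilde{K}$ is smooth with rapid decay in $u$ and so contributes only an infinitely smoothing operator.

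The heart of the argument is to verify the symbol estimate
$$
|\partial_{\xi'}^{\alpha} \partial_{\xi_N}^{\beta} \partial_x^{I} a_0(x,\xi)|
\;\lesssim\; (1+|\xi|)^{m-|\beta|}\, \bigl(1+|\xi'|+|\xi_N|^{1/2}\bigr)^{n-|\alpha|}.
$$
My approach is a two-parameter dyadic decomposition $\phi \tilde{K} = \sum_{k\le j\le 2k} \tilde{K}_{k,j}$, where $\tilde{K}_{k,j}$ is supported in $\{|u| \sim 2^{-j}\} \cap \{|u'|+|u_N|^{1/2} \sim 2^{-k}\}$. On each piece, integration by parts trades one $\partial_{u'}$ for a factor $|u|^{-1} \lesssim 2^{j}$ but earns $|\xi'|^{-1}$, while one $\partial_{u_N}$ costs $(|u'|+|u_N|^{1/2})^{-2} \lesssim 2^{2k}$ and earns $|\xi_N|^{-1}$. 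Combining with the $L^{\infty}$-bound on $\tilde{K}_{k,j}$ and the measure of its support gives $|a_{0,k,j}(x,\xi)| \lesssim 2^{jn+2km}$ on the anisotropic frequency box $\{|\xi'| \lesssim 2^{j},\ |\xi_N| \lesssim 2^{2k}\}$, with rapid decay outside. Summing over admissible $(j,k)$ reproduces the target mixed symbol profile in every frequency regime; $\partial_x$ and further $\partial_\xi$ derivatives are handled by the same scheme.

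Finally, Fourier inversion yields $\int a_0(x,\xi) e^{2\pi i u\cdot\xi} d\xi = \phi(u)\tilde{K}(x,u)$, and re-substituting $u=\Theta_0(x,y)$ shows that the $\Theta_0$-representation of $a_0$ recovers $K(x,y)$ on the near-diagonal region; the far-diagonal part (where $1-\phi \neq 0$) is Schwartz by the arbitrary-$M$ decay in the kernel bound and so is absorbed into a smoothing operator. Theorem~\ref{thm:Thetarep}(b) then supplies the desired $a \in S^{m,n}(\mathcal{D})$. The \textbf{main obstacle} is the third paragraph: the dyadic bookkeeping and integration-by-parts strategy must correctly reproduce the mixed anisotropic profile $(1+|\xi|)^m(1+\|\xi\|)^n$ in every regime of $(|\xi'|, |\xi_N|)$. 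The summability of the dyadic sum is delivered precisely by the hypothesis $m<0,\ n<0$, which is also the reason no cancellation conditions on $K$ need to be imposed.
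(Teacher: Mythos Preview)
Your approach is sound and closely parallels the paper's sketch, though organized on the dual side. Both reduce first to the flat distribution $\mathcal{D}^0$ (the paper: ``the crux of the matter is the case where the distribution at hand is $\mathcal{D}^0$''); your explicit appeal to Theorem~\ref{thm:Thetarep}(b) to pass back from $a_0\in S^{m,n}(\mathcal{D}^0)$ to $a\in S^{m,n}(\mathcal{D})$ is the natural device. The only real difference is in establishing the symbol bound for $a_0$: the paper simply declares a ``corresponding splitting of the $\xi$ space'' into three regions (dual to the three $u$-regions $|u'|<|u''|$, $|u'|>|u''|^{1/2}$, and the intermediate range, used in the forward direction of Theorem~\ref{thm:KernelTheta}) and omits the details, whereas you carry out a two-parameter dyadic decomposition of the kernel in $u$-space with $|u|\sim 2^{-j}$, $|u'|+|u_N|^{1/2}\sim 2^{-k}$, $k\le j\le 2k$, and then sum the Fourier transforms of the pieces. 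The constraint $k\le j\le 2k$ is exactly the dyadic shadow of the paper's three regions (the endpoints $j\sim k$ and $j\sim 2k$ correspond to the two extreme regions, the interior to the intermediate one), so the two organizations are genuinely dual: you localize in $u$ and estimate uniformly in $\xi$, the paper localizes in $\xi$ and integrates in $u$. Either way, the hypotheses $m<0$, $n<0$ enter precisely to make the relevant sums (respectively integrals) converge with no cancellation condition on $K$, which you identify correctly.
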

 
On the other hand, when one of $m$ or $n$ is non-negative, the kernel $K(x,y)$ satisfies some cancellation conditions, which together with the kernel estimates in part (\ref{7b}) of Theorem~\ref{thm:KernelTheta}, characterize the kernels arising from symbols of the class $S^{m,n}$. We will only state the cancellation conditions in the case where $m = n = 0$.

A function $\phi$ is said to be a \textbf{normalized bump function}, if $\phi$ is smooth and supported on the unit ball, with $\|\partial^I \phi\|_{L^{\infty}} \leq C_I$ for all $I$.

\begin{thm} \label{thm:KernelS00}
\begin{enumerate}[(a)]
\item \label{9a} Suppose $a \in S^{0,0}$, and $K(x,y)$ is a distribution on $\mathbb{R}^N \times \mathbb{R}^N$ for which part (\ref{7a}) of Theorem~\ref{thm:KernelTheta} holds. Then there exists a distribution $k_0(x,u)$ on $\mathbb{R}^N \times \mathbb{R}^N$, with $$K(x,y) = k_0(x,\Theta_0(x,y)),$$ such that the following cancellation conditions hold:
whenever $\phi_1(u')$, $\phi_2(u'')$ are normalized bump functions, and $R_1, R_2 \geq 1$, $M' \geq 0$, we have
\begin{align} 
\label{eq:canccondz}
\left| \int_{\mathbb{R}^{N-1}} \partial_x^I \partial_{u''}^{\lambda} k_0(x,u) \phi_1(R_1 u') du' \right| & \lesssim \frac{1}{|u''|^{1+|\lambda|+M'}} \\
\label{eq:canccondt}
\left| \int_{\mathbb{R}} \partial_x^I \partial_{u'}^{\gamma} k_0(x,u) \phi_2(R_2 u'') du'' \right| & \lesssim \frac{1}{|u'|^{(N-1)+|\gamma|+M'}} \\
\label{eq:canccondzt}
\left| \int_{\mathbb{R}^N} \partial_x^I k_0(x,u) \phi_1(R_1 u') \phi_2(R_2 u'') du \right| & \lesssim 1.
\end{align} 
Here $u = (u',u'') \in \mathbb{R}^{N-1} \times \mathbb{R}$, and $R_1 u'$ and $R_2 u''$ are the Euclidean dilations of $u'$ and $u''$ respectively.
\item \label{9b} Conversely, if $K(x,y)$ is a distribution on $\mathbb{R}^N \times \mathbb{R}^N$ that satisfies part (\ref{7b}) of Theorem~\ref{thm:KernelTheta} with $m = n = 0$, and $K(x,y) = k_0(x,\Theta_0(x,y))$ for some distribution $k_0(x,u)$  on $\mathbb{R}^N \times \mathbb{R}^N$ that satisfies the cancellation conditions (\ref{eq:canccondz}), (\ref{eq:canccondt}) and (\ref{eq:canccondzt}), then there exists $a \in S^{m,n}$, such that part (\ref{7a}) of Theorem~\ref{thm:KernelTheta} holds.
\end{enumerate}
\end{thm}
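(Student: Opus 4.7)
The plan is to reduce, via Theorem~\ref{thm:Thetarep}, to an equivalent statement in which the cancellation conditions (\ref{eq:canccondz})--(\ref{eq:canccondzt}) appear as the classical cancellation conditions for flag kernels adapted to the Euclidean flag $\{0\} \subset \mathbb{R}^{N-1} \subset \mathbb{R}^N$, and in which $k_0(x,u)$ and a symbol $a_0(x,\xi) \in S^{0,0}(\mathcal{D}^0)$ are related by partial Fourier transform in the fiber variable.

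For part (\ref{9a}), Theorem~\ref{thm:Thetarep}(a) lets us write, modulo an infinitely smoothing operator,
\[
T_a f(x) = \iint \phi(x-y) a_0(x,\xi) e^{2\pi i \Theta_0(x,y)\cdot\xi} f(y)\, dy\, d\xi
\]
for some $a_0 \in S^{0,0}(\mathcal{D}^0)$. I would then define
\[
k_0(x,u) := \int a_0(x,\xi) e^{2\pi i u\cdot\xi}\, d\xi,
\]
interpreted as an oscillatory integral in $u$, so that $K(x,y) = k_0(x,\Theta_0(x,y))$ up to a smooth correction (which can be absorbed into $k_0$). Condition (\ref{eq:canccondz}) is then verified by Fourier inversion in $u'$:
\[
\int \partial_x^I \partial_{u''}^\lambda k_0(x,u)\, \phi_1(R_1 u')\, du' = R_1^{-(N-1)} \int \hat\phi_1(-\xi'/R_1) \int (2\pi i \xi_N)^\lambda \partial_x^I a_0(x,\xi)\, e^{2\pi i u''\xi_N}\, d\xi_N\, d\xi',
\]
followed by iterated integration by parts in $\xi_N$ using $|\partial_{\xi_N}^M \partial_x^I a_0| \lesssim (1+|\xi|)^{-M}$; rapid decay in $|u''|$ follows directly, while uniformity in $R_1 \geq 1$ comes from the change of variable $\xi' = R_1\eta$ paired with Schwartz decay of $\hat\phi_1$. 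Condition (\ref{eq:canccondt}) is symmetric, using $|\partial_{\xi'}^\alpha a_0| \lesssim (1+\|\xi\|)^{-|\alpha|}$ with $|\alpha|$ taken large enough that the $\xi'$-integral converges. Condition (\ref{eq:canccondzt}) is immediate from $|a_0| \lesssim 1$ and the $L^1$ bound for $\hat\phi_j$.

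For the converse (\ref{9b}), I would produce $a_0 \in S^{0,0}(\mathcal{D}^0)$ as a regularized partial Fourier transform in $u$ of $k_0(x,u)$, and then invoke Theorem~\ref{thm:Thetarep}(b) to obtain $a \in S^{0,0}(\mathcal{D})$ with the desired kernel. Since $\int k_0(x,u) e^{-2\pi i u\cdot\xi}\,du$ does not converge absolutely, the definition proceeds via a flag-style dyadic decomposition
\[
k_0(x,u) = \sum_{j,k \in \mathbb{Z}} k_0^{(j,k)}(x,u),
\]
with $k_0^{(j,k)}$ concentrated where $|u'| \sim 2^{-j}$ and $|u''|^{1/2} \sim 2^{-k}$. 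The pulled-back differential inequalities of Theorem~\ref{thm:KernelTheta}(\ref{7b}) show that each $k_0^{(j,k)}$ is a rescaled flag bump, while the cancellation hypotheses let one control sums over either index. Fourier transforming, each $\widehat{k_0^{(j,k)}}(x,\xi)$ is essentially localized in a rectangle $|\xi'| \lesssim 2^j$, $|\xi_N|^{1/2} \lesssim 2^k$, and regrouping the pieces by $(\xi',\xi_N)$-scale yields $a_0$ with the symbol estimate $|\partial_{\xi'}^\alpha \partial_{\xi_N}^\beta \partial_x^I a_0| \lesssim (1+|\xi|)^{-|\beta|}(1+\|\xi\|)^{-|\alpha|}$.

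The main obstacle is this converse: a single dyadic piece $k_0^{(j,k)}$ carries no cancellation on its own, so the symbol bounds must be extracted from appropriately grouped partial sums, invoking (\ref{eq:canccondz})--(\ref{eq:canccondzt}) to tame the contributions where one or both scales are very small. This is in the spirit of the Fourier-side characterization of flag kernels in the Heisenberg setting \cite{NRSW2012}, with the additional bookkeeping that the $x$-dependence must be carried uniformly through the decomposition; once the dyadic summation is set up, the passage back through Theorem~\ref{thm:Thetarep}(b) to produce $a \in S^{0,0}(\mathcal{D})$ is routine.
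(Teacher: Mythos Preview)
Your reduction to $\mathcal{D}^0$ and the Fourier duality between $a_0$ and $k_0$ is exactly the paper's strategy, and both of you appeal to \cite{NRSW2012} for the underlying mechanism. The paper's sketch, however, is organized differently: for part~(\ref{9a}) it splits the $u$-space near the origin into the three regions $|u'| < |u''|$, $|u'| > |u''|^{1/2}$, and the intermediate $|u''| \le |u'| \le |u''|^{1/2}$, handling each separately (the intermediate one being the delicate case); for part~(\ref{9b}) it uses the dual three-region decomposition of the $\xi$-space. Your route---direct integration by parts in $\xi_N$ or $\xi'$ for (\ref{9a}), and a full two-parameter dyadic decomposition $\sum_{j,k} k_0^{(j,k)}$ for (\ref{9b})---is equally valid and arguably more systematic, though the three-region split has the advantage of isolating the genuinely ``mixed'' behavior in a single critical region rather than distributing it across all $(j,k)$ with $j \le k \le 2j$.

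One minor remark: for part~(\ref{9a}) you do not need Theorem~\ref{thm:Thetarep}, since for $\Theta_0$ the representation~(\ref{eq:Theta0rep}) already holds exactly, with no smoothing error to absorb. Also, your claim that (\ref{eq:canccondt}) is ``symmetric'' to (\ref{eq:canccondz}) should be read loosely---the $\xi'$ and $\xi_N$ variables scale differently in $S^{0,0}(\mathcal{D}^0)$, so the integration-by-parts bookkeeping for (\ref{eq:canccondt}) uses $|\partial_{\xi'}^{\alpha} a_0| \lesssim (1+\|\xi\|)^{-|\alpha|}$ rather than $(1+|\xi|)^{-|\alpha|}$; this still yields the required $|u'|$-decay, but is not a verbatim repetition of the $\xi_N$ argument.
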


We remark that the above three theorems are invariant under changes of frames, and under admissible changes of coordinates. 

The proofs of these theorems follow the philosophy of \cite{NRSW2012}, where the result of Theorem~\ref{thm:KernelS00} is proved in the context of 2-step nilpotent groups when $m = n = 0$.  The crux of the matter is the case where the distribution at hand is $\mathcal{D}^0$, which we focus from now on. In order to prove Theorem~\ref{thm:KernelTheta} and Theorem~\ref{thm:KernelS00}(\ref{9a}), we write $u = x-y$, and split $u = (u', u'') \in \mathbb{R}^{N-1} \times \mathbb{R}$. We break up the $u$ space near the origin into 3 regions, where $|u'| < |u''|$, where $|u'| > |u''|^{1/2}$, and the intermediate case where $|u''| \leq |u'| \leq |u''|^{1/2}$. The intermediate case then gives rise to the most critical part of the integral. In proving Theorem~\ref{thm:KernelThetaEasyConverse} and Theorem~\ref{thm:KernelS00}(\ref{9b}), we use a corresponding splitting of the $\xi$ space. The details are omitted.

\section{The $L^p$ theory}

Our main theorem in this section is the $L^p$ boundedness of pseudodifferential operators whose symbols are in the class $S^{0,0}$:

\begin{thm} \label{thm:Lptwoflag}
If $a(x,\xi) \in S^{0,0}$, then $T_a$ maps $L^p(\mathbb{R}^N)$ to $L^p(\mathbb{R}^N)$, for all $1 < p < \infty$.
\end{thm}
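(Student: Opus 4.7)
The plan is to establish $L^2$ boundedness first by showing that $S^{0,0}\subset S^0_{1/2,1/2}$ (the standard symbol class of type $(1/2,1/2)$), and then to obtain $L^p$ for $p\ne 2$ by exploiting the kernel characterization from Theorems~\ref{thm:KernelTheta} and~\ref{thm:KernelS00} (with $m=n=0$) to reduce to the $L^p$ theory for intersected flag kernels developed in~\cite{NRSW2012}.

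For the $L^2$ step, if $a\in S^{0,0}$, then the defining estimate \eqref{eq:symbolclassmn} gives $|\partial_\xi^\alpha a(x,\xi)|\lesssim (1+|\xi|)^{-|\alpha|/2}$, since $\rho_x(\xi)+|\xi|^{1/2}\gtrsim (1+|\xi|)^{1/2}$ for $|\xi|\ge 1$. Rewriting $\partial/\partial x^i$ by means of \eqref{eq:Didef} as $D_i$ minus a term whose coefficients grow linearly in $\xi$ times $\partial_\xi$ yields $|\partial_x^I a|\lesssim (1+|\xi|)^{|I|/2}$. Hence $a\in S^0_{1/2,1/2}$, and Calderón--Vaillancourt gives $\|T_a\|_{L^2\to L^2}<\infty$.

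For $p\ne 2$, the plan is to use Theorem~\ref{thm:KernelS00}(\ref{9a}): the kernel of $T_a$ takes the form $K(x,y)=k_0(x,\Theta_0(x,y))$, where for each fixed $x$ the distribution $k_0(x,\cdot)$ satisfies the differential inequalities \eqref{eq:kernelest1} of Theorem~\ref{thm:KernelTheta}(\ref{7b}) together with the cancellation conditions \eqref{eq:canccondz}, \eqref{eq:canccondt}, \eqref{eq:canccondzt}. These are precisely the conditions identifying $k_0(x,\cdot)$ with the intersection of the two flag kernel classes on $\mathbb{R}^N=\mathbb{R}^{N-1}\times\mathbb{R}$ whose (convolution) $L^p$ theory is the subject of~\cite{NRSW2012}. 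I would then transfer that argument to the variable-coefficient setting via a Littlewood--Paley decomposition $a=\sum_{j,k} a_{j,k}$ on the two natural scales ($1+|\xi|\simeq 2^j$ and $1+\rho_x(\xi)+|\xi|^{1/2}\simeq 2^k$, with $0\le k\le j/2+O(1)$), proving almost-orthogonality for the $T_{a_{j,k}}$ by integration by parts in the good derivatives $D_\xi$, $D_i$ (relying on the algebra property, Theorem~\ref{thm:compose}), and then summing via a two-parameter Littlewood--Paley square function adapted to the two flags.

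The main obstacle will be the variable-coefficient nature of $\rho_x(\xi)$ and of $\Theta_0(x,y)$: the flag structure is only approximately preserved as the base point $x$ varies, so the convolution-setting arguments of~\cite{NRSW2012} must be adapted with care. I expect to control the resulting error terms by a freezing-of-coefficients localization (justified by the admissible coordinate invariance, Theorem~\ref{thm:admisscoord}), together with the algebra property (Theorem~\ref{thm:compose}) and the off-diagonal decay built into the kernel estimates \eqref{eq:kernelest1}.
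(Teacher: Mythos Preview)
Your $L^2$ step via Calder\'on--Vaillancourt is correct and is a legitimate shortcut; the paper does not isolate $p=2$ but treats all $1<p<\infty$ uniformly.

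For $p\ne 2$, the paper's route differs from yours in a significant way: it does \emph{not} pass through the kernel characterization (Theorems~\ref{thm:KernelTheta}, \ref{thm:KernelS00}) or attempt to reduce to the translation-invariant theory of \cite{NRSW2012} by freezing coefficients. Instead it works directly on the operator side, using two explicit Littlewood--Paley families: the standard isotropic $P_j$ (a constant-coefficient Fourier multiplier localizing $|\xi|\simeq 2^j$) and a variable-coefficient non-isotropic $Q_k$ (with symbol $\psi_k(M_x\xi)$, localizing $\|M_x\xi\|\simeq 2^k$). The crucial technical input is a pointwise bound (Lemma~\ref{lem:twoflagmaxest}): for any $a\in S^{m,n}$,
\[
|T_a P_j^* Q_k^* f(x)| \lesssim 2^{jm}2^{kn}\,\M f(x),
\]
where $\M$ is a strong maximal function dominated by $M_{\D}M$ (hence bounded on $L^p$, $p>1$). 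Combined with square-function equivalences for both $\{P_j\}$ and $\{Q_k\}$ (the latter, \eqref{eq:Lpequiv}, being itself a nontrivial variable-coefficient statement), this gives almost-orthogonality (Lemma~\ref{lem:LpTacancel}) and then the theorem by a standard double-sum argument. The point is that the variable-coefficient nature of $\rho_x(\xi)$ is absorbed directly into the definition of $Q_k$; no freezing is ever needed.

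Your proposed detour---kernel characterization, identification with intersected flag kernels, then freezing to invoke \cite{NRSW2012}---is broadly in the right spirit (two-parameter Littlewood--Paley plus almost-orthogonality), but the freezing step is where I would worry. Since $T_a$ for $a\in S^{0,0}$ is \emph{not} of weak type $(1,1)$ (as noted explicitly in Section~5), the usual Calder\'on--Zygmund machinery for passing from constant to variable coefficients is unavailable, and controlling the commutator errors in a genuinely two-parameter setting is delicate. The paper's direct approach sidesteps this entirely, at the cost of having to build the variable-coefficient square function \eqref{eq:Lpequiv} and the approximate reproducing formula for $\{Q_k\}$ from scratch.
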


The proof resembles that of the corresponding result for flag kernels on the Heisenberg group $\mathbb{H}^n$; see e.g. \cite{MR1376298}  or \cite{MR2949616}. There one needs to use two versions of Littlewood-Paley decompositions, one of which is adapted to the non-isotropic (aka automorphic) dilations on $\mathbb{H}^n$, and another is a Littlewood-Paley projection in the central variable $t$. In what follows, we will also need two Littlewood-Paley decompositions. But we will use, instead of Littlewood-Paley projections in the single variable $t$, Littlewood-Paley projections that are `isotropic' in nature.

\subsection{The Littlewood-Paley decompositions}

We now turn to the two versions of Littlewood-Paley projections that we need.

The first version is an `isotropic' one, which we denote by $P_j$. Let $\phi \in C^{\infty}_c(\mathbb{R}^N)$ be such that 
$$ \phi(\xi)  = \begin{cases} 1 \quad \text{if $|\xi| \leq 1$} \\   0 \quad \text{if $|\xi| \geq 2$} \end{cases},$$ 
and define $$p_0(\xi) = \phi(\xi), \quad p_j(\xi) = \phi(2^{-j} \xi) - \phi(2^{-(j-1)} \xi) \quad \text{for $j \geq 1$}.$$ 
Here $2^{-j} \xi$ is the isotropic dilation of $\xi$ by $2^{-j}$. We then have 
$\sum_{j=0}^{\infty} p_j = 1.$
Now for $f \in \s$, $j \geq 0$, we define $$P_j f(x) = \int_{\mathbb{R}^N} p_j(\xi) \widehat{f}(\xi) e^{2\pi i x \cdot \xi} d\xi.$$ Then 
for all $f \in L^p(\mathbb{R}^N)$, we have $f = \sum_{j=0}^{\infty} P_j f$ with convergence in $L^p(\mathbb{R}^N)$.
Now write $\tilde{P}_j = \sum_{|j'-j| \leq 1} P_{j'}$. Then 
$$f = \sum_{j=0}^{\infty} \tilde{P}_j^* P_j f.$$
We also have the Littlewood-Paley inequality
$$\|f\|_{L^p} \simeq \left\| \left( \sum_{j=0}^{\infty} |P_j f|^2 \right)^{1/2} \right\|_{L^p}, \quad 1 < p < \infty.$$

The second version is a `non-isotropic' one. Let $\psi \in C^{\infty}_c(\mathbb{R}^N)$ such that 
$$ \psi(\xi)  = \begin{cases} 1 \quad \text{if $|\xi| \leq 1$} \\   0 \quad \text{if $|\xi| \geq 2$} \end{cases}.$$ 
Let 
$$
\psi_0(\xi) = \psi(\xi), \quad \psi_k(\xi) = \psi(2^{-k} \circ \xi) - \psi(2^{-(k-1)} \circ \xi) \quad \text{for $k \geq 1$}.
$$
Here $2^{-k} \circ \xi$ is the non-isotropic dilation of $\xi$, defined by $$2^{-k} \circ (\xi', \xi_N) = (2^{-k} \xi', 2^{-2k} \xi_N).$$ Now let
$$ 
q_k(x,\xi) = \psi_k(M_x \xi) \quad \text{for $k \geq 0$,}
$$ 
and define, for $f \in \s$,
$$Q_k f(x) = \int_{\mathbb{R}^N} q_k(x,\xi) \widehat{f}(\xi) e^{2\pi i x \cdot \xi} d\xi.$$ 
One can show that
\begin{equation} \label{eq:Lpequiv}
\left\| \left( \sum_{k=0}^{\infty} |Q_k f|^2 \right)^{1/2} \right\|_{L^p} \simeq_p \|f\|_{L^p}
\end{equation}
for all $f \in L^p(\mathbb{R}^N)$, $1 < p < \infty$.

Furthermore, one can show that given $1 < p < \infty$, there exists $R = R(p)$, such that if
$$\tilde{Q}_k = \sum_{|k'-k| \leq R} Q_{k'},$$ then there exists a pseudodifferential operator $E = E_p \colon L^p(\mathbb{R}^N) \to L^p(\mathbb{R}^N)$, with $I-E$ is invertible on $L^p(\mathbb{R}^N)$, such that
$$f = \sum_{k=0}^{\infty} \tilde{Q}_k^* Q_k (I-E)^{-1}  f$$ for all $f \in L^p(\mathbb{R}^N)$. Here the convergence in $L^p(\mathbb{R}^N)$.

\subsection{The strong maximal function}

We will also need to introduce three maximal functions. The first one is just the standard (isotropic) Hardy-Littlewood maximal function:
$$Mf(x) = \sup_{r > 0} \fint_{|y-x| < r} |f(y)| dy.$$
(Here $\fint$ denotes the average over the domain of integration.) The second one is the `non-isotropic' maximal function, adapted to the real-variable structure given by the quasi-distance $d(x,y)$:
$$M_{\D} f(x) = \sup_{s > 0} \fint_{d(y,x) < s} |f(y)| dy.$$
Finally, let $Q_{r,s}(x)$ be the `rectangular cube' 
\begin{align*}
Q_{r,s}(x) = & \left\{ y \in \mathbb{R}^N \colon \sum_{i=1}^{N-1} \left| \Theta_0(x,y)_i \right| \leq s, \left| \Theta_0(x,y)_N \right| \leq r \right\}.
\end{align*}
The last maximal function we need is a `strong' maximal function, defined by
$$\M_0 f(x) = \sup_{\substack{0 < r,\, s < 1 \\ s^2 \leq r \leq s}} \, \fint_{Q_{r,s}(x)} |f(y)| dy.$$
It is well-known that $M$ and $M_{\D}$ are weak-type (1,1) and strong type $(p,p)$ for all $1 < p \leq \infty$. On the other hand, 
\begin{equation} \label{eq:Mmixedbound}
\M_0 f(x) \lesssim M_{\D} M f(x).
\end{equation}
In fact, if $0 < r, s < 1$ and $s^2 \leq r \leq s$, then writing $\eta$ for the characteristic function of the Euclidean unit ball, we have
\begin{align*}
\fint_{Q_{s^2,s}(x)} \fint_{|y-z|<r} |f(y)| dy dz
&= \frac{1}{s^{N+1} r^N} \int \chi_{Q_{s^2,s}(x)}(z) \eta(\frac{z-y}{r}) |f(y)| dy dz \\
&\geq C \frac{1}{s^{N+1} r^N} \int_{Q_{r,s}(x)} \left( \int \eta(\frac{z-y}{r}) \chi_{Q_{s^2,s}(x)}(z) dz \right) |f(y)| dy \\
&= C \frac{1}{s^{N-1} r} \int_{Q_{r,s}(x)} |f(y)| dy \\
& \geq C \M_0 f(x).
\end{align*}
As a result, (\ref{eq:Mmixedbound}) follows. In particular, $\M_0$ is bounded on $L^p(\mathbb{R}^N)$ for all $1 < p \leq \infty$.

Note that $\M_0$ defined above considers only sup over small values of $r$ and $s$. In applications, it is often convenient to consider the global maximal function
$$
\M f(x) = \M_0 f(x) +  \sup_{r > 1} \fint_{|y-x| < r} |f(y)| dy + \sup_{s > 1} \fint_{d(y,x) < s} |f(y)| dy.
$$
Then
$$
M f \leq \M f, \quad M_{\D} f \leq \M f, \quad \text{and} \quad \M f \leq M_{\D} M f.
$$
In particular, $\M$ is bounded on $L^p(\mathbb{R}^N)$ for all $1 < p \leq \infty$.

\subsection{The $L^p$ boundedness}

We now proceed to prove Theorem~\ref{thm:Lptwoflag}. The main estimate we need in the proof of Theorem~\ref{thm:Lptwoflag} is the following:

\begin{lemma} \label{lem:twoflagmaxest}
Let $a \in S^{m,n}$. Then for $f \in L^p(\mathbb{R}^N)$, we have
$$
|T_a P_j^* Q_k^* f| \lesssim 2^{jm} 2^{kn} \M f
$$
almost everywhere.
\end{lemma}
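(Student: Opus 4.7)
The plan is to compute the Schwartz kernel $K_{j,k}(x,y)$ of $T_a P_j^* Q_k^*$ explicitly and show it is pointwise dominated by $2^{jm+kn}$ times a normalised average over a rectangle admissible for the strong maximal function. A direct computation (writing $Q_k^*f(y)=\int\int \overline{q_k(z,\xi)}f(z)e^{2\pi i(y-z)\cdot\xi}dz\,d\xi$ and applying $P_j^*$ as a Fourier multiplier) gives
$$
K_{j,k}(x,y)=\int a(x,\xi)\,\overline{p_j(\xi)\,q_k(y,\xi)}\,e^{2\pi i(x-y)\cdot\xi}\,d\xi.
$$
On the joint $\xi$-support one has $|\xi|\sim 2^j$ and $\rho_y(\xi)+|\xi|^{1/2}\sim 2^k$, which forces $k\le j\le 2k$ (outside this range the supports are essentially disjoint and $K_{j,k}$ is trivially Schwartz-small). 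In this regime the $\xi$-volume is $\lesssim 2^{k(N-1)+j}$; moreover once one knows $|x-y|\lesssim 2^{-k}$ on the effective support, the bound $|\rho_x-\rho_y|\lesssim |x-y||\xi|\lesssim 2^{j-k}\le 2^k$ upgrades this to $\rho_x(\xi)+|\xi|^{1/2}\sim 2^k$, so that $|a(x,\xi)|\lesssim 2^{jm+kn}$ there.

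The heart of the argument is extracting decay in the two geometrically distinct components $u'=\Theta_0(x,y)'$ and $u_N=\Theta_0(x,y)_N$. I would integrate by parts in the straightened derivatives $\partial_{\eta_i}:=\sum_j B_j^i(x)\partial_{\xi_j}$, which satisfy $\partial_{\eta_i}e^{2\pi i(x-y)\cdot\xi}=2\pi i\,u_i\,e^{2\pi i(x-y)\cdot\xi}$; note $\partial_{\eta_N}$ is precisely the good derivative $D_\xi$ of Section~\ref{subsect:goodcotder}. Using the characterisation $\tilde a(x,\eta):=a(x,B_x\eta)\in S^{m,n}(\mathcal{D}^0)$ from (\ref{eq:symbolclassmnequiv}), a single $\partial_{\eta_i}$ with $i<N$ gains $2^{-k}$ from $a$, $2^{-j}$ from $p_j$, and $2^{-k}$ from $q_k$; while $\partial_{\eta_N}$ gains $2^{-j}$ from $a$ and from $p_j$ and, via $A_yB_x=I+O(|x-y|)$, gains at most $2^{-2k}+|x-y|\cdot 2^{-k}$ from $q_k$. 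Restricting a priori to $|x-y|\lesssim 2^{-k}$ (the complementary regime is handled by a crude Euclidean integration by parts in $\partial_\xi$ that makes $K_{j,k}$ Schwartz-small there), the $q_k$-gain under $\partial_{\eta_N}$ becomes $\lesssim 2^{-2k}\le 2^{-j}$ (using $j\le 2k$). Taking $M_1$ integrations by parts in $\partial_{\eta'}$ together with $M_2$ in $\partial_{\eta_N}$ then produces
$$
|K_{j,k}(x,y)|\lesssim 2^{jm+kn}\cdot 2^{k(N-1)+j}\,(1+2^k|u'|)^{-M_1}(1+2^j|u_N|)^{-M_2}.
$$

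Finally, since $k\le j\le 2k$, the rectangle $Q_{2^{-j},2^{-k}}(x)$ has parameters $r=2^{-j}$, $s=2^{-k}$ satisfying $s^2\le r\le s$, so it is admissible for $\M_0$, with measure $\sim 2^{-k(N-1)-j}$. Changing variables $y\mapsto u=\Theta_0(x,y)$ (Jacobian bounded) and decomposing the kernel bound as a telescoping sum of normalised averages over dyadic dilates of this rectangle yields $\int |K_{j,k}(x,y)||f(y)|\,dy\lesssim 2^{jm+kn}\M_0 f(x)\le 2^{jm+kn}\M f(x)$, which is the desired estimate. The main obstacle will be the variable-coefficient accounting for $q_k=\psi_k(M_y\xi)$: the off-diagonal $O(|x-y|)$ corrections in $A_yB_x$ degrade the $\partial_{\eta_N}$ gain on $q_k$, which is exactly why the Euclidean case-split at $|x-y|\sim 2^{-k}$ must precede the refined directional integrations by parts rather than being deduced from them.
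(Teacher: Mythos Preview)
Your overall approach matches the paper's: compute the kernel $K_{j,k}$ explicitly, observe the support constraint $k\lesssim j\lesssim 2k$, extract decay in $u'=\Theta_0(x,y)'$ at scale $2^k$ and in $u_N=\Theta_0(x,y)_N$ at scale $2^j$ via integration by parts, and then dominate by the strong maximal function. Your displayed kernel bound is equivalent to (in fact slightly sharper than) the paper's $(1+2^k|x-y|+2^j|u_N|)^{-M}$ bound, and the final passage to $\M_0$ is correct.

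There is, however, a real gap in your treatment of the region $|x-y|\gtrsim 2^{-k}$. Crude Euclidean $\partial_\xi$ integration by parts produces only $(1+2^k|x-y|)^{-L}$ decay: every factor in the integrand (including $q_k$) loses at best $2^{-k}$ under $\partial_\xi$, so no faster rate is available. But the $\xi$-support has volume $\sim 2^{k(N-1)+j}$, not $2^{kN}$, so integrating $(1+2^k|x-y|)^{-L}$ against $|f|$ over this region gives a contribution of order $2^{jm+kn}\cdot 2^{j-k}\cdot Mf(x)$, too large by a factor $2^{j-k}$ (up to $2^k$ when $j\approx 2k$). The missing ingredient is decay in $u_N$ at scale $2^j$ \emph{uniformly} in $|x-y|$, not merely for $|x-y|\lesssim 2^{-k}$. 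One way to recover it: the bad error from $D_\xi$ hitting $\tilde q_k$ carries an \emph{explicit} factor of $(x-y)$; writing $(x-y)^p e^{2\pi i(x-y)\cdot\xi}=(2\pi i)^{-1}\partial_{\xi_p}e^{2\pi i(x-y)\cdot\xi}$ and performing a secondary integration by parts converts this into an additional $\partial_\xi$-derivative (gain $2^{-k}$), so the error contributes $2^{-2k}/|u_N|\le 2^{-j}/|u_N|$ rather than $|x-y|2^{-k}/|u_N|$. Iterating this carefully gives the uniform kernel bound without any case split. Your parenthetical remark that the crude regime ``must precede'' the refined one has the dependency backwards: it is the refined $D_\xi$ argument, augmented by the secondary IBP, that must be made to work globally.
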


One can see this by writing 
\begin{equation} \label{eq:TaPj*Qk*}
T_a P_j^* Q_k^* f(x) = \int_{\mathbb{R}^N} \int_{\mathbb{R}^N} a(x,\xi) p_j(\xi) q_k(y,\xi) f(y) e^{2\pi i (x-y) \cdot \xi} dy d\xi
\end{equation} 
and estimating the kernel of $T_a P_j^* Q_k^*$, namely 
$$
K_{j,k}(x,y) = \int_{\mathbb{R}^N} a(x,\xi) p_j(\xi) q_k(y,\xi) e^{2\pi i (x-y) \cdot \xi} d\xi.
$$
(Note that the use of $P_j^*$ and $Q_k^*$ (instead of their adjoints) allows for a simple expression of this kernel $K_{j,k}$.) In fact, the kernel $K_{j,k}(x,y)$ is only non-zero when $k-\ell_0 \leq j \leq 2k+\ell_0$ for some absolute constant $\ell_0$, and satisfies
$$
|K_{j,k}(x,y)| \lesssim 
\displaystyle{\frac{2^{km} 2^{jn} 2^{k(N-1)} 2^j}{ (1+2^k|x-y|+2^j|\Theta_0(x,y)_N|)^{-M}}}.
$$
We omit the details.

As a result of Lemma~\ref{lem:twoflagmaxest}, we have the following:

\begin{lemma} \label{lem:LpTacancel}
Suppose $a \in S^{0,0}$. Then for $f \in L^p(\mathbb{R}^N)$, we have
$$|Q_{k'} P_{j'} T_a \tilde{P}_j^* \tilde{Q}_k^* f| \lesssim_R 2^{-|j-j'|} 2^{-|k-k'|} \M f$$ 
almost everywhere.
\end{lemma}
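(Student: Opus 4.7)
The strategy is to leverage the near-orthogonality of the two Littlewood--Paley decompositions. Using Theorem~\ref{thm:compose} I would fold the ``outer'' projections $P_{j'}$ and $Q_{k'}$ into the symbol of $T_a$, and then invoke Lemma~\ref{lem:twoflagmaxest} after gaining $2^{-|j-j'|}$ and $2^{-|k-k'|}$ from support considerations on the Littlewood--Paley cutoffs.

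First I would apply Theorem~\ref{thm:compose} twice, together with the Kohn--Nirenberg formula (available here because $P_{j'}$ is isotropic and $Q_{k'}$ is non-isotropic), to write $Q_{k'}P_{j'}T_a = T_{a'}$ for some $a'\in S^{0,0}$ whose leading symbol is $q_{k'}(x,\xi)\,p_{j'}(\xi)\,a(x,\xi)$, with lower-order corrections produced by $\partial_\xi$ hitting $p_{j'}$ (gaining a factor $2^{-j'}$) or $q_{k'}$ (gaining $2^{-k'}$). Then I would decompose $\tilde P_j^* \tilde Q_k^* = \sum_{|j''-j|\le 1,\,|k''-k|\le R} P_{j''}^* Q_{k''}^*$, so that the problem reduces to bounding $T_{a'}P_{j''}^*Q_{k''}^* f$ for each $(j'',k'')$ in this finite range. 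For each such pair one computes the kernel
\[
K_{j'',k''}(x,y) = \int a'(x,\xi)\,p_{j''}(\xi)\,q_{k''}(y,\xi)\,e^{2\pi i(x-y)\xi}\,d\xi
\]
exactly as for $K_{j,k}$ in the discussion following Lemma~\ref{lem:twoflagmaxest}.

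The extra decay comes from inspecting the $\xi$-support of $a'$. The leading-order contribution contains the factor $p_{j'}(\xi)p_{j''}(\xi)$, which vanishes identically when $|j'-j''|\ge 2$, and the factor $q_{k'}(x,\xi)q_{k''}(y,\xi)$, which vanishes for $|k'-k''|$ larger than an absolute constant depending on $R$---here one uses the compatibility estimate (\ref{eq:rhoxycond}) to compare $\rho_x$ and $\rho_y$ on the support of the kernel, where $|x-y|$ is controlled on the relevant non-isotropic scale. Every higher-order term in the Kohn--Nirenberg expansion still has essential $\xi$-support in $|\xi|\sim 2^{j'}$ and $\rho_x(\xi)\sim 2^{k'}$, so after sufficiently many integrations by parts in $\xi$ one obtains
\[
|K_{j'',k''}(x,y)| \lesssim 2^{-|j-j'|}\,2^{-|k-k'|}\cdot\frac{2^{\min(k,k')(N-1)}\,2^{\min(j,j')}}{(1+2^{\min(k,k')}|x-y|+2^{\min(j,j')}|\Theta_0(x,y)_N|)^M},
\]
of the same shape as the kernel bound recorded after Lemma~\ref{lem:twoflagmaxest} multiplied by the required decay. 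Integrating against $|f|$ and recognising the right-hand side as a non-isotropic box-average then yields the pointwise bound $2^{-|j-j'|}2^{-|k-k'|}\,\M f(x)$ stated in the lemma.

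The main obstacle is running the two decays simultaneously: the $\partial_\xi$-integrations by parts used to extract the isotropic gain $2^{-|j-j'|}$ must not spoil the non-isotropic support structure responsible for the gain $2^{-|k-k'|}$, and conversely. The mixed-homogeneity structure of $S^{0,0}$---with its two independent weights $1+|\xi|$ and $1+\rho_x(\xi)+|\xi|^{1/2}$ and its decoupled derivative systems $D_\xi$ and $\partial_\xi$---is precisely what allows both estimates to be carried out in parallel.
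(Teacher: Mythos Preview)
Your approach is quite different from the paper's, and also considerably more laborious. The paper does not re-estimate any kernels at all. Instead, for each of the four sign combinations of $(j-j',k-k')$ it simply rescales the outer projections: e.g.\ when $j<j'$ and $k>k'$ one writes
\[
Q_{k'}P_{j'}T_a \;=\; 2^{-j'}2^{k'}\bigl(2^{-k'}Q_{k'}\bigr)\bigl(2^{j'}P_{j'}\bigr)T_a \;=\; 2^{-j'}2^{k'}\,T_b,
\]
noting that $2^{j'}p_{j'}\in S^{1,0}$ and $2^{-k'}q_{k'}\in S^{0,-1}$ \emph{uniformly} in $j',k'$ (since on their supports $2^{j'}\simeq 1+|\xi|$ and $2^{-k'}\simeq (1+\rho_x(\xi)+|\xi|^{1/2})^{-1}$). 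By Theorem~\ref{thm:compose} this gives $b\in S^{1,-1}$ uniformly, and Lemma~\ref{lem:twoflagmaxest} with $(m,n)=(1,-1)$ then produces the factor $2^{j}2^{-k}$, which combines with $2^{-j'}2^{k'}$ to give $2^{-|j-j'|}2^{-|k-k'|}$. The other cases use $(m,n)\in\{(\pm 1,\pm 1)\}$ analogously. The whole proof is two lines per case.

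Your direct-kernel route has a real gap on the non-isotropic side. The isotropic orthogonality is fine: $p_{j'}(\xi)p_{j''}(\xi)$ genuinely vanishes for $|j'-j''|\ge 2$. But your parallel claim that $q_{k'}(x,\xi)q_{k''}(y,\xi)$ vanishes for $|k'-k''|$ large is false as stated, because these two cutoffs live at \emph{different} base points. Their $\xi$-supports are only disjoint when $|x-y|$ is small enough that $\rho_x(\xi)\approx\rho_y(\xi)$ on the relevant shell; for general $x,y$ they can overlap no matter how large $|k'-k''|$ is. Your appeal to ``$|x-y|$ being controlled on the relevant non-isotropic scale'' is circular: that control is exactly what the kernel bound you are trying to prove would give. (There is also a smaller issue: the Kohn--Nirenberg expansion is only asserted in the paper when one of the two factors is isotropic, so composing $Q_{k'}$ with $P_{j'}T_a$ does not fall under that hypothesis, and the precise ``essential support'' of $a'$ you need is not immediate from an asymptotic expansion anyway.) One can likely rescue your strategy by splitting into $|x-y|$ small/large and arguing separately, but this is substantially more work than the paper's two-line rescaling trick, which sidesteps the difficulty entirely by pushing the gain into the orders $(m,n)$ and letting Lemma~\ref{lem:twoflagmaxest} do the rest.
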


\begin{proof}
Suppose for instance $j < j'$ and $k > k'$. Then we write $$Q_{k'} P_{j'} T_a = 2^{-j'} 2^{k'} (2^{-k'} Q_{k'})(2^{j'} P_{j'}) T_a = 2^{-j'} 2^{k'} T_b$$ for some $b \in S^{1,-1}$. Thus invoking the previous lemma, we have $$|Q_{k'} P_{j'} T_a \tilde{P}_j^* \tilde{Q}_k^* f| =  2^{-j'} 2^{k'} | T_b \tilde{P}_j^* \tilde{Q}_k^* f| \lesssim 2^{-j'+j} 2^{k'-k} \M f,$$ as desired. The other cases can be handled similarly.
\end{proof}

\begin{proof}[Proof of Theorem~\ref{thm:Lptwoflag}]
Assume without loss of generality that $f \in \s$. We want to show $\|T_a f\|_{L^p} \lesssim \|f\|_{L^p}$. Now
\begin{align*}
\|T_a f\|_{L^p} & \lesssim_p \left\| \left( \sum_{j=-\infty}^{\infty} \sum_{k=-\infty}^{\infty} |Q_k P_j T_a f|^2 \right)^{1/2} \right\|_{L^p}
\end{align*}
where we define $Q_k = P_j = 0$ if $j, k < 0$.
But
$$f = \sum_{k'=-\infty}^{\infty} \sum_{j'=-\infty}^{\infty} \tilde{P}_{j'}^* \tilde{Q}_{k'}^* Q_{k'} (I-E)^{-1} P_{j'}  f$$ with convergence in $L^p(\mathbb{R}^N)$. Thus
$$Q_k P_j T_a f = \sum_{k'} \sum_{j'} Q_k P_j T_a \tilde{P}_{j+j'}^* \tilde{Q}_{k+k'}^* Q_{k+k'} (I-E)^{-1} P_{j+j'} f.$$
It then follows from Lemma~\ref{lem:LpTacancel} that
$$
|Q_k P_j T_a f| \lesssim \sum_{k'} \sum_{j'}  2^{-|j'|-|k'|} \M Q_{k+k'} (I-E)^{-1} P_{j+j'} f.
$$
Hence
\begin{align*}
\|T_a f\|_{L^p}
& \lesssim_p \sum_{k'} \sum_{j'} 2^{-|j'|-|k'|} \left\| \left( \sum_{j} \sum_{k} |\M Q_{k+k'} (I-E)^{-1} P_{j+j'} f|^2 \right)^{1/2} \right\|_{L^p},
\end{align*}
which is bounded by $C_p \|f\|_{L^p}$.
\end{proof}

\section{Two special ideals in $S^{0,0}$}

Within $S^{0,0}$ there are two special ideals of symbols, namely $S^{\ep,-2\ep}$ and $S^{-\ep,\ep}$, $\ep > 0$, that enjoy better properties than symbols in $S^{0,0}$.

\subsection{Weak-type (1,1) estimates}

Theorem~\ref{thm:Lptwoflag} shows that operators of class $S^{0,0}$ are bounded on $L^p$ for $1 < p < \infty$. On the other hand, in general, an operator of class $S^{0,0}$ is not of weak-type $(1,1)$. This can be seen, for instance, by considering the operators $S_z$ in the Section~\ref{sect:smoothing}, when $\text{Re}\, z = 0$. Nevertheless, the two ideals of $S^{0,0}$ mentioned above, namely $S^{\ep,-2\ep}$ and $S^{-\ep,\ep}$, $\ep > 0$, give rise to operators that are weak-type $(1,1)$, as is shown in the following theorem.

\begin{thm} \label{thm:Sep-2epweak11}
Suppose $a \in S^{\ep,-2\ep}$ or $S^{-\ep,\ep}$ for some $\ep > 0$. Then $T_a$ is of weak-type $(1,1)$.
\end{thm}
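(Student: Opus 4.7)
The plan is to establish weak-type $(1,1)$ by a Calder\'on-Zygmund decomposition, exploiting the $\varepsilon$-slack in each of the two ideals. The starting point is the $L^2$ boundedness of $T_a$, which follows from Theorem~\ref{thm:Lptwoflag} and the inclusions $S^{\varepsilon,-2\varepsilon},\, S^{-\varepsilon,\varepsilon} \subset S^{0,0}$, together with the pointwise kernel bounds from Theorem~\ref{thm:KernelTheta} (valid for $\varepsilon$ in the admissible range where $m>-1$ and $n>-(N-1)$).

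For $a \in S^{\varepsilon,-2\varepsilon}$ I would carry out the CZ decomposition of $f \in L^1$ at height $\alpha$ with respect to the non-isotropic quasi-distance $d$ --- the triple $(\mathbb{R}^N, d, dx)$ is a space of homogeneous type by Section~\ref{subsect:quasidistance} --- producing $f = g + \sum_i b_i$ with $g$ bounded, each $b_i$ supported in a $d$-ball $B_i$, $\int b_i = 0$, and $\sum_i |B_i| \lesssim \alpha^{-1} \|f\|_{L^1}$. For $a \in S^{-\varepsilon,\varepsilon}$ the same scheme is run with Euclidean balls in place of $d$-balls. Chebyshev plus $L^2$ boundedness handle the good part, yielding $|\{|T_a g| > \alpha\}| \lesssim \alpha^{-2} \|T_a g\|_{L^2}^2 \lesssim \alpha^{-1}\|f\|_{L^1}$. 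After discarding $\bigcup_i 2B_i$, whose total measure is $\lesssim \alpha^{-1} \|f\|_{L^1}$, the bad part reduces to establishing
$$
\int_{x \notin 2B_i} |T_a b_i(x)|\, dx \lesssim \|b_i\|_{L^1},
$$
which, using $\int b_i = 0$, follows from a H\"ormander-type integral bound on the kernel $K(x,y)$ in the appropriate quasi-distance.

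The main obstacle is verifying this H\"ormander condition, since $K(x,y)$ is simultaneously singular in $|x-y|$ and in $d(x,y)$. The differential inequalities (\ref{eq:kernelest1}) show that an Euclidean derivative $\partial_{x,y}$ costs a factor $d(x,y)^{-2}$, a horizontal derivative $X'_{x,y}$ costs $|x-y|^{-1}$, while the ``mixed'' derivatives $D^I_{x,y}$ are essentially free (with arbitrary Euclidean decay $|x-y|^{-M}$). The $\varepsilon$-slack in each ideal provides exactly the integrability one needs. For $a \in S^{-\varepsilon,\varepsilon}$, passing from $K(x,y_0)$ to $K(x,y)$ along an Euclidean segment uses $\partial_{x,y}$, and its $d^{-2}$ cost is compensated by the improved exponent $-2(1-\varepsilon)$ on $d$ that $m = -\varepsilon$ confers in (\ref{eq:kernelest1}). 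For $a \in S^{\varepsilon,-2\varepsilon}$, moving along a $d$-geodesic one decomposes the displacement into a horizontal part (handled by $X'_{x,y}$) and a vertical part, and both costs are absorbed by the improved Euclidean exponent $-(N-1)+2\varepsilon$ that $n = -2\varepsilon$ provides. In each case the freedom in the parameter $M$ is used to secure convergence at infinity, and the integration region is split according to whether $d(x,x_i) \simeq |x-x_i|$ or $d(x,x_i) \gg |x-x_i|$ so that the two regimes, dictated by the geometry of $\mathcal{D}$, can be estimated separately.
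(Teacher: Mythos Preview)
Your proposal is correct and follows essentially the same route as the paper: reduce to a H\"ormander condition via the Calder\'on--Zygmund machinery, matching $S^{\varepsilon,-2\varepsilon}$ with the non-isotropic quasi-distance $d$ and $S^{-\varepsilon,\varepsilon}$ with the Euclidean distance, and derive the H\"ormander integral bound from the kernel inequalities~(\ref{eq:kernelest1}) of Theorem~\ref{thm:KernelTheta}. The paper states exactly the two H\"ormander conditions (\ref{eq:weak111}) and (\ref{eq:weak113}) as the key step and omits their verification, whereas you go further in sketching how the $\varepsilon$-slack in the exponents absorbs the cost of the relevant derivative.
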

 
\begin{proof}
Suppose $a(x,\xi) \in S^{\ep,-2\ep}$ for some $\ep > 0$. The key is to prove that for some absolute constant $C_0 > 1$, the kernel $K(x,y)$ of $T_a$ satisfies, for any $y_1, y_2 \in \mathbb{R}^N$,
\begin{equation} \label{eq:weak111}
\int_{d(x,y_1) > C_0 d(y_1,y_2)} |K(x,y_1) - K(x,y_2)| dx \lesssim 1.
\end{equation}
Similarly, when $a \in S^{-\ep,\ep}$, the key is to show that
\begin{equation} \label{eq:weak113}
\int_{|x-y_1| > C_0 |y_1-y_2|} |K(x,y_1) - K(x,y_2)| dx \lesssim 1.
\end{equation}
These are consequences of the kernel estimates in Theorem~\ref{thm:KernelTheta}. We omit the proofs.
\end{proof}

\subsection{Preservation of H\"older spaces}

Let $\Lambda^{\alpha}$, $\alpha > 0$ be the ordinary `isotropic' Lipschitz space on $\mathbb{R}^N$. We will also need a `non-isotropic' Lipschitz space $\Gamma^{\alpha}$, which we define as follows.

For $0 < \alpha < 1$, we say $f \in \Gamma^{\alpha}$, if and only if $f \in L^{\infty}$, and $$|f(x)-f(y)| \leq C d(x,y)^{\alpha} \quad \text{for all $x,y \in \mathbb{R}^N$}.$$
We then define $$\|f\|_{\Gamma^{\alpha}} \simeq \|f\|_{L^{\infty}} + \sup_{x \ne y} \frac{|f(x)-f(y)|}{d(x,y)^{\alpha}}.$$

More generally, for $\alpha > 0$, let $s$ be the integer so that $\alpha \in [s,s+1)$. We say $f \in \Gamma^{\alpha}$, if $$\|f\|_{L^{\infty}} \leq C,$$ and for each $x \in \mathbb{R}^N$, $r > 0$, there exists a polynomial $P_{x,r}(y)$ of degree $\leq s$ such that 
\begin{equation} \label{eq:estbyTaylor}
\sup_{y \in B(x,r)} |f(y)-P_{x,r}(y)| \leq C r^{\alpha}.
\end{equation}
Here $B(x,r)$ is the non-isotropic ball of radius $r$. We write $\|f\|_{\Gamma^{\alpha}}$ for the least possible $C$ in the above inequalities.

We then have:

\begin{prop} \label{prop:equivnonisoLip2}
Suppose $\alpha > 0$. 
\begin{enumerate}[(a)]
\item  If $f \in \Gamma^{\alpha}$, then there exists a decomposition $$f = \sum_{k=0}^{\infty} f_k, \quad \text{with} \quad \|\partial_x^{\gamma} X'^{\lambda} f_k\|_{L^{\infty}} \leq C 2^{k(2|\gamma|+|\lambda|-\alpha)}$$
for all $k \geq 0$ and all $0 \leq |\gamma| + |\lambda| \leq \alpha + 1$. Here $C \lesssim \|f\|_{\Gamma^{\alpha}}$.
\item Conversely, if $f$ admits a decomposition as in part (a), then $f \in \Gamma^{\alpha}$ with $\|f\|_{\Gamma^{\alpha}} \lesssim C.$
\end{enumerate}
\end{prop}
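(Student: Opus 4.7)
The plan is to prove both implications by mimicking the classical Littlewood--Paley characterization of isotropic H\"older spaces, but using the non-isotropic projections $Q_k$ from Section~3 in place of their isotropic counterparts. Throughout, set $s = \lfloor \alpha \rfloor$.

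For part~(a), I take $f_k := Q_k f$, after first replacing the symbol $\psi_k$ by one with enough cancellation that the associated kernel $K_k(x,y)$ annihilates polynomials in $\Theta_0(x,y)$ of non-isotropic weighted degree $\leq s+1$ (where $u_N$ is given weight~$2$ and $u_i$, $i<N$, weight~$1$). This kernel is essentially supported on the non-isotropic ball $B(x,2^{-k})$, with $L^1(dy)$-norm $\lesssim 1$. Using its cancellation together with the approximation property~(\ref{eq:estbyTaylor}) for $f \in \Gamma^\alpha$, one writes
\[
Q_k f(x) = \int K_k(x,y)\bigl[f(y) - P_{x,2^{-k}}(y)\bigr]\,dy,
\]
and obtains $|Q_k f(x)| \lesssim 2^{-k\alpha}\|f\|_{\Gamma^\alpha}$. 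For derivative estimates, I write $f_k = \tilde Q_k Q_k f$ where $\tilde Q_k$ fattens $Q_k$ over the same frequency band, and verify by direct kernel estimates that $\partial_x^\gamma X'^\lambda \tilde Q_k$ is bounded on $L^\infty$ with norm $\lesssim 2^{k(2|\gamma|+|\lambda|)}$---each $\partial_{x^j}$ falling on the kernel costs $2^{2k}$ and each $X'$ costs $2^k$, reflecting the non-isotropic scaling. Composing gives the claimed bound, and $\sum_k f_k = f$ pointwise; any low-frequency ambiguity is absorbed into $f_0$, which automatically satisfies the claimed bounds via $\|f\|_{L^\infty} \leq \|f\|_{\Gamma^\alpha}$.

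For part~(b), fix $x$ and $r = 2^{-k_0}$, and split $f = f^{\mathrm{low}} + f^{\mathrm{high}}$ with $f^{\mathrm{low}} = \sum_{k \leq k_0} f_k$. Since $\alpha > 0$, the tail bound gives $\|f^{\mathrm{high}}\|_{L^\infty} \leq C\sum_{k>k_0} 2^{-k\alpha} \lesssim r^\alpha$. For $f^{\mathrm{low}}$, let $P_{x,r}(y)$ be the Taylor polynomial of $f^{\mathrm{low}}$ at~$x$ in the good coordinates adapted to $\Theta_0$, truncated at non-isotropic weighted degree~$s$. For any $(\gamma,\lambda)$ with $2|\gamma|+|\lambda| > \alpha$, the hypothesis gives
\[
\|\partial_x^{\gamma} X'^{\lambda} f^{\mathrm{low}}\|_{L^\infty}
\leq C\sum_{k \leq k_0} 2^{k(2|\gamma|+|\lambda|-\alpha)}
\lesssim 2^{k_0(2|\gamma|+|\lambda|-\alpha)},
\]
so the Taylor remainder at $y \in B(x,r)$, which scales like $r^{|\lambda|+2|\gamma|}$ times these top-order derivatives, is bounded by $r^\alpha$ times the constant $C$.

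The main obstacle I expect is reconciling, in part~(b), the natural non-isotropic Taylor polynomial (organized by weighted degree, with $\partial_{x^N}$ counted twice) with the requirement that $P_{x,r}$ be an ordinary polynomial in $y$ of degree $\leq s$. Terms whose non-isotropic weight is $\leq s$ but whose ordinary degree exceeds $s$ must either be absorbed into $f^{\mathrm{high}}$-type errors (by showing directly that they contribute at most $r^\alpha$) or re-expressed via the nonlinear relation between $y-x$ and $\Theta_0(x,y)$ to lower the ordinary degree. A secondary subtlety in part~(a) is the construction of a $Q_k$ with enough cancellation to annihilate polynomials up to weighted order $s+1$: the naive difference $\psi_k$ kills only the mean, so one must iterate the construction or subtract an explicit low-order symbol, in the spirit of the two-parameter analysis of~\cite{NRSW2012}.
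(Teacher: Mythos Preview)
The paper does not actually give its own proof of this proposition: immediately after the statement it only says ``See Campanato~\cite{MR0167862}, Krantz~\cite{MR556266}, and \cite[Section 9]{MR549321} for some relevant facts,'' and then moves on. So there is nothing to compare your argument against line by line. That said, your outline is exactly the standard route taken in those references (and in the omitted proof of Lemma~\ref{lem:QkGammagamma}, which is essentially the $|\gamma|=|\lambda|=0$ case of part~(a) for the particular choice $f_k=Q_kf$): use the non-isotropic Littlewood--Paley pieces $Q_kf$ together with the polynomial approximation~(\ref{eq:estbyTaylor}) to get the decay $2^{-k\alpha}$, and recover derivative bounds from the scaling of the kernel; for the converse, split at scale $r=2^{-k_0}$ and Taylor-expand the low-frequency part.

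The two concerns you flag are real but both are handled in the cited sources. For the polynomial-degree issue in~(b), the point is that in the definition of $\Gamma^\alpha$ one only needs \emph{some} polynomial of ordinary degree $\leq s$ achieving~(\ref{eq:estbyTaylor}); terms in the non-isotropic Taylor expansion of weighted degree $\leq s$ but ordinary degree $> s$ involve at least one factor of $\Theta_0(x,y)_N$ beyond what their ordinary degree accounts for, and since $|\Theta_0(x,y)_N|\lesssim r^2$ on $B(x,r)$ these terms are already $O(r^{s+1})=O(r^\alpha)$ and can simply be dropped. For the cancellation issue in~(a), one does not need to modify $\psi_k$: the kernel of $Q_k$ already annihilates all polynomials in $u=\Theta_0(x,y)$ (its symbol $\psi_k(M_x\xi)$ vanishes near $\xi=0$ for $k\geq 1$, so the kernel integrates to zero against every monomial), which is more than enough. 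Your device of writing $f_k=\tilde Q_k Q_k f$ is unnecessary and, as stated, not quite correct in the variable-coefficient setting (one does not have $\tilde Q_k Q_k = Q_k$ exactly here); it is cleaner to differentiate $Q_k f$ directly and use that $\partial_x^\gamma X'^\lambda$ applied to the kernel of $Q_k$ costs $2^{k(2|\gamma|+|\lambda|)}$ in $L^1(dy)$, which follows from the symbol estimates for $q_k\in S^0_{\mathcal D}$.
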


See Campanato \cite{MR0167862}, Krantz \cite{MR556266}, and \cite[Section 9]{MR549321} for some relevant facts.

As is known, $S^0$ does not preserve $\Gamma^{\alpha}$ for $\alpha > 0$, and $S^0_{\mathcal{D}}$ does not preserve $\Lambda^{\alpha}$ for $\alpha > 0$. Nonetheless, we have the following theorem:

\begin{thm} \label{thm:twoflagHolder}
Let $a \in S^{\varepsilon,-2\varepsilon}$ or $S^{-\varepsilon,\varepsilon}$ for some $\varepsilon > 0$. Then 
\begin{enumerate}[(i)]
\item $T_a \colon \Lambda^{\alpha} \to \Lambda^{\alpha}$ for all $\alpha > 0$; and
\item \label{thm:twoflagHoldernoniso} $T_a \colon \Gamma^{\alpha} \to \Gamma^{\alpha}$ for all $\alpha > 0$.
\end{enumerate}
\end{thm}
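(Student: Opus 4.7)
The strategy is to combine Littlewood–Paley characterizations of the two Lipschitz spaces with a refined almost-orthogonality estimate, in the spirit of Lemma~\ref{lem:LpTacancel} but sharpened to exploit the $\varepsilon$ of extra smoothing built into the special ideals. I focus on statement (ii) with $a \in S^{\varepsilon,-2\varepsilon}$; the other three cases follow symmetrically, interchanging the roles of $P_j$ and $Q_k$ and of the two inclusions in (\ref{eq:symbol_inclusion}).

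First I would record the two Littlewood–Paley descriptions: classically, a bounded $f$ lies in $\Lambda^\alpha$ iff $\|P_j f\|_{L^\infty} \lesssim 2^{-j\alpha}$ for every $j \geq 0$; and, as the non-isotropic analogue, a bounded $f$ lies in $\Gamma^\alpha$ iff $\|Q_k f\|_{L^\infty} \lesssim 2^{-k\alpha}$ for every $k \geq 0$. The latter follows from Proposition~\ref{prop:equivnonisoLip2}: applied to the pieces $f_k := \tilde Q_k^* Q_k (I-E)^{-1} f$ from the non-isotropic reproducing formula, the non-isotropic scale $2^{-k}$ of the symbol $q_k(x,\xi)$ yields $\|\partial_x^\gamma X'^\lambda f_k\|_{L^\infty} \lesssim 2^{k(2|\gamma|+|\lambda|)} \|Q_k f\|_{L^\infty}$, which is exactly condition (a) of the proposition, once we know $(I-E)^{-1}$ is bounded on $\Gamma^\alpha$ (an easy consequence of $E$ being infinitely smoothing). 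The converse direction is a standard summation argument using the pointwise derivative bounds on $q_k$.

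The main analytic step is the almost-orthogonality estimate: for $a \in S^{\varepsilon,-2\varepsilon}$ and every $N_0 \geq 0$,
$$
\|Q_{k'} T_a \tilde Q_k^* g\|_{L^\infty} \lesssim_{N_0} 2^{-N_0|k-k'|} \|g\|_{L^\infty},
$$
together with $\|P_{j'} T_a \tilde P_j^* g\|_{L^\infty} \lesssim_{N_0} 2^{-N_0|j-j'|} \|g\|_{L^\infty}$ for $a \in S^{-\varepsilon,\varepsilon}$. This is an oscillatory-integral estimate on the kernel of $Q_{k'} T_a \tilde Q_k^*$ parallel to the estimate on the kernel of $T_a P_j^* Q_k^*$ sketched after Lemma~\ref{lem:twoflagmaxest}: one writes the kernel as an iterated integral and integrates by parts in $\xi$ using both $D_\xi$ (which gains a full $|\xi|^{-1}$ on $a$) and $\partial_\xi$ (which gains the non-isotropic weight $(1+\rho_x(\xi)+|\xi|^{1/2})^{-1} \sim 2^{-\max(k,k')}$ on $a$), exploiting the localization of $q_{k'}$ and $\tilde q_k$ at the non-isotropic scales $2^{k'}$ and $2^k$. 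The extra $(1+\rho_x(\xi)+|\xi|^{1/2})^{-2\varepsilon}$ built into $S^{\varepsilon,-2\varepsilon}$ ensures these integrations by parts may be iterated to any order without exhausting the symbol's non-isotropic decay, yielding the arbitrary-order gain in $|k-k'|$.

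With this in hand, the conclusion is immediate: for $f \in \Gamma^\alpha$, writing $f = \sum_k \tilde Q_k^* Q_k (I-E)^{-1} f$ and applying the Littlewood–Paley characterization,
$$
\|Q_{k'} T_a f\|_{L^\infty} \leq \sum_{k \geq 0} \|Q_{k'} T_a \tilde Q_k^*\|_{L^\infty \to L^\infty} \|Q_k (I-E)^{-1} f\|_{L^\infty} \lesssim \sum_{k \geq 0} 2^{-N_0|k-k'|} 2^{-k\alpha} \|f\|_{\Gamma^\alpha},
$$
which upon choosing $N_0 > \alpha$ is bounded by $2^{-k'\alpha}\|f\|_{\Gamma^\alpha}$, proving $T_a f \in \Gamma^\alpha$. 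The main obstacle is the arbitrary-order decay in the almost-orthogonality estimate: Lemma~\ref{lem:LpTacancel} affords only single-power decay, so one must iterate integration by parts to arbitrary order, which is made possible precisely by the extra $\varepsilon$-smoothing of the special ideals (and is unavailable for $S^{0,0}$ alone); moreover, the $L^\infty$ framework offers no orthogonality to absorb errors, so every constant in the iterated kernel estimate must be tracked uniformly.
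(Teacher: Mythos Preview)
Your strategy—Littlewood--Paley characterization plus an almost-orthogonality estimate—is in the same spirit as the paper's, but the central step has a gap. You claim $\|Q_{k'} T_a \tilde Q_k^*\|_{L^\infty\to L^\infty} \lesssim_{N_0} 2^{-N_0|k-k'|}$ for every $N_0$, justified by saying that ``the extra $(1+\rho_x(\xi)+|\xi|^{1/2})^{-2\varepsilon}$ built into $S^{\varepsilon,-2\varepsilon}$ ensures these integrations by parts may be iterated to any order.'' That reasoning is not right: $\varepsilon$ is a fixed small number and contributes only a fixed extra factor $(1+\|\xi\|)^{-2\varepsilon}$; it cannot by itself produce decay of arbitrary order. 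The paper's Lemmas~\ref{lem:idealLinfty} and~\ref{lem:QkGammagamma} do obtain arbitrary-$M$ decay, but that decay comes from the frequency localization of the $q_k$'s, and it is only \emph{one-sided}: $2^{-M(k-l)}$ for $k\ge l$ in Lemma~\ref{lem:idealLinfty}, and $2^{-M(l-m)}$ for $l\ge m$ in Lemma~\ref{lem:QkGammagamma}. The hypothesis $a\in S^{\varepsilon,-2\varepsilon}$ (or $S^{-\varepsilon,\varepsilon}$) enters through Lemma~\ref{lem:idealLinfty}, whose uniform $L^\infty$ bounds are what fail for general $S^{0,0}$; it is not the mechanism for the arbitrary-order gain.

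Because only one-sided decay is available, the paper cannot close the argument with a single sum as you do. It writes $T_a f=\sum_k T_a Q_k^* f$ (using $\sum_k Q_k^*=I$, so no $(I-E)^{-1}$ is needed and your side issue of its $\Gamma^\alpha$-boundedness does not arise), and then verifies the hypotheses of Proposition~\ref{prop:equivnonisoLip2}(b) directly for the pieces $F_k=T_a Q_k^* f$. This requires inserting \emph{two} further resolutions $\sum_m Q_m$ and $\sum_l Q_l^*$: for fixed $m\le k$ the $l$-sum is split at the midpoint $(k+m)/2$, so that on one half the one-sided estimate of Lemma~\ref{lem:idealLinfty} applies and on the other half that of Lemma~\ref{lem:QkGammagamma}; the two combine to give $2^{-\frac{M}{2}(k-m)}$, which is summable in $m$ once $M>2\alpha$. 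Your single two-sided estimate attempts to collapse this three-index argument into one step; the missing idea is precisely this intermediate $l$-splitting that replaces the unavailable two-sided bound by a pair of one-sided ones.
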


We will only prove part (\ref{thm:twoflagHoldernoniso}) of the theorem, since the proof of the other part is similar. The key are the following two lemma (whose proofs we omit):
\begin{lemma} \label{lem:idealLinfty}
Suppose $a \in S^{\varepsilon,-2\varepsilon}$ or $S^{-\varepsilon,\varepsilon}$ for some $\varepsilon > 0$. Then
$$\|\partial_x^{\gamma} X'^{\lambda} T_a Q_k^*\|_{L^{\infty} \to L^{\infty}} \leq C 2^{k(2|\gamma|+|\lambda|)}$$
and for all $M \geq 0$, we have
$$\|\partial_x^{\gamma} X'^{\lambda} T_a Q_k^* Q_l^*\|_{L^{\infty} \to L^{\infty}} \leq C_M 2^{-M(k-l)} 2^{l(2|\gamma|+|\lambda|)} \quad \text{if $k \geq l$}.$$
\end{lemma}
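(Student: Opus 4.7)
My plan is to reduce both inequalities to pointwise estimates on integral kernels and then integrate in $y$, using $\|S\|_{L^\infty\to L^\infty} = \sup_x \int |K_S(x,y)|\,dy$. The compound symbol calculus gives
$$K_k(x,y) = \int a(x,\xi)\, \overline{q_k(y,\xi)}\, e^{2\pi i(x-y)\cdot\xi}\, d\xi$$
as the kernel of $T_a Q_k^*$, and
$$K_{k,l}(x,y) = \iiint a(x,\xi)\, \overline{q_k(z,\xi)}\, \overline{q_l(y,\eta)}\, e^{2\pi i[(x-z)\cdot\xi + (z-y)\cdot\eta]}\, d\xi\, d\eta\, dz$$
as the kernel of $T_a Q_k^* Q_l^*$. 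Applying $\partial_x^{\gamma} X'^{\lambda}$ and letting derivatives fall on the phase $e^{2\pi i(x-\cdot)\cdot\xi}$ produces a leading symbol factor of size $|\xi|^{|\gamma|}\rho_x(\xi)^{|\lambda|}$; derivatives of $a$ or of the frame coefficients $A_i^j(x)$ are lower order and handled analogously.

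For the first inequality, on $\operatorname{supp} q_k(y,\cdot)$ one has $|\xi| \lesssim 2^{2k}$ and $\rho_y(\xi) \lesssim 2^k$, and the discrepancy $|\rho_x-\rho_y| \leq C|x-y|\cdot|\xi|$ is absorbed into the spatial decay below. Since $|a(x,\xi)| \lesssim 1$ for $a\in S^{\varepsilon,-2\varepsilon}\cup S^{-\varepsilon,\varepsilon}$, the symbol is bounded pointwise by $C\cdot 2^{k(2|\gamma|+|\lambda|)}$. To extract spatial decay I change variable $\tilde\xi = M_x\xi$, so the phase becomes $\Theta_0(x,y)\cdot\tilde\xi$, and integrate by parts in $\tilde\xi$ at the anisotropic scale (gain $2^{-k}$ per derivative in the first $N-1$ directions, $2^{-2k}$ in the last). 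This yields, for any $M\geq 0$,
$$|\partial_x^{\gamma} X'^{\lambda} K_k(x,y)| \lesssim \frac{2^{k(2|\gamma|+|\lambda|)}\,2^{k(N+1)}}{(1+2^k d(x,y))^{M}},$$
whose $y$-integral is $\lesssim 2^{k(2|\gamma|+|\lambda|)}$ since the non-isotropic ball of radius $2^{-k}$ has volume $\simeq 2^{-k(N+1)}$.

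For the second inequality, the analogous $\xi$- and $\eta$-integration-by-parts give anisotropic decay in $d(x,z)$ and $d(z,y)$ at scales $2^k$ and $2^l$. The new ingredient is almost-orthogonality from the $z$-integration: if $q_k(z,\xi)$ were independent of $z$, then integrating $e^{2\pi iz(\eta-\xi)}$ in $z$ would force $\eta=\xi$ through a $\delta$-function, but the supports $\|M_y\xi\|\sim 2^k$ and $\|M_y\eta\|\sim 2^l$ are incompatible for $k\geq l+R$. To exploit this I expand $q_k(z,\xi)$ in a Taylor series about $z=y$; the constant-in-$z$ term contributes zero by the delta-argument above, while each higher remainder carries $|z-y|^j$ that trades with the $\eta$-IBP decay at scale $2^{-l}$ to produce $2^{-j(k-l)}$. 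Iterating, and absorbing the surplus $2^{(k-l)(2|\gamma|+|\lambda|)}$ (from rewriting the derivative bound at scale $k$ in terms of scale $l$) into $2^{-M(k-l)}$ since $M$ is arbitrary, yields
$$|\partial_x^{\gamma} X'^{\lambda} K_{k,l}(x,y)| \lesssim 2^{-M(k-l)}\cdot \frac{2^{l(2|\gamma|+|\lambda|)}\,2^{l(N+1)}}{(1+2^l d(x,y))^{M'}},$$
whose $y$-integral gives the claim.

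The main obstacle is the bookkeeping in the second inequality: extracting the almost-orthogonality factor $2^{-M(k-l)}$ while simultaneously preserving the anisotropic spatial decay at the smaller scale $2^l$, controlling the variable-coefficient errors $|\rho_x-\rho_z|,|\rho_z-\rho_y|\leq C|\cdot|\cdot|\xi|$, and managing the interaction between the Taylor remainders and the $\eta$-IBP. This combination of two anisotropic scales, frequency mismatch, and variable coefficients is what makes the proof technical and is why the paper omits the details.
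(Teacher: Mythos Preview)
The paper omits the proof of this lemma, so there is no line-by-line comparison to make; your overall strategy (kernel estimates via anisotropic integration by parts, almost-orthogonality via Taylor expansion in $z$) is the natural one and is consistent with the spirit of Lemma~\ref{lem:twoflagmaxest}. However, there is a genuine gap in your argument for the first inequality: you never actually use the hypothesis $a\in S^{\varepsilon,-2\varepsilon}\cup S^{-\varepsilon,\varepsilon}$ beyond the consequence $|a|\lesssim 1$, and that consequence alone is \emph{not} enough.

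The problem is your claim that integration by parts in $\tilde\xi_N$ gains $2^{-2k}$. This is true when the derivative lands on $q_k$ (since $\partial_{\eta_N}\psi_k\sim 2^{-2k}$), but when it lands on $a_0$ the $S^{0,0}$ estimate gives only $|D_\xi a|\lesssim (1+|\xi|)^{-1}$, and on $\operatorname{supp}q_k$ one has $2^k\lesssim|\xi|\lesssim 2^{2k}$, so the gain ranges from $2^{-2k}$ up to merely $2^{-k}$. Consequently the amplitude $a_0\cdot\psi_k$ is not a single anisotropic bump but rather spreads across $\sim k$ isotropic scales $|\xi|\sim 2^j$, $k\le j\le 2k$; your pointwise kernel bound $|K_k|\lesssim 2^{k(N+1)}(1+2^kd(x,y))^{-M}$ is in fact false at intermediate values of $|\Theta_0(x,y)_N|$. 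If one instead inserts $\sum_j P_j=I$ and uses the kernel bound underlying Lemma~\ref{lem:twoflagmaxest}, one finds $\|K_k(x,\cdot)\|_{L^1}\lesssim\sum_{j=k}^{2k}2^{jm+kn}$ for $a\in S^{m,n}$. For $(m,n)=(0,0)$ this is $\sim k$, a logarithmic loss that would only give $T_a:\Gamma^\alpha\to\Gamma^{\alpha-}$ in Theorem~\ref{thm:twoflagHolder}. The ideal hypothesis is precisely what makes the sum geometric: for $(m,n)=(\varepsilon,-2\varepsilon)$ it is dominated by $j=2k$ and equals $O(1)$; for $(m,n)=(-\varepsilon,\varepsilon)$ it is dominated by $j=k$ and again equals $O(1)$. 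You need to make this step explicit. (The second inequality is less sensitive, since any logarithm there can be absorbed into $2^{-M(k-l)}$ by enlarging $M$.)
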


\begin{lemma} \label{lem:QkGammagamma}
Suppose $f \in \Gamma^{\alpha}$ for some $\alpha > 0$ with $\|f\|_{\Gamma^{\alpha}} \leq 1$. Then
$$\|Q_m f \|_{L^{\infty}} \leq C 2^{-m \alpha}$$
and for all $M \geq 0$, we have
$$\|Q_l^* Q_m f\|_{L^{\infty}} \leq C_M 2^{-M(l-m)} 2^{-m\alpha}  \quad \text{if $l \geq m$}.$$
\end{lemma}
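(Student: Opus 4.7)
\emph{Plan.} For part~(a), I would invoke Proposition~\ref{prop:equivnonisoLip2} to decompose $f = \sum_{k \geq 0} f_k$ with $\|\partial_x^\gamma X'^\lambda f_k\|_{L^\infty} \lesssim 2^{k(2|\gamma|+|\lambda|-\alpha)}$, and split $Q_m f = \sum_k Q_m f_k$ at the index $k=m$. For $k \geq m$, the change of variable $\eta = M_x \xi$ reduces the kernel of $Q_m$ to a non-isotropic rescaling of $\check\psi_m$, which has uniformly bounded $L^1$ norm; hence $\|Q_m f_k\|_{L^\infty} \lesssim \|f_k\|_{L^\infty} \lesssim 2^{-k\alpha}$, and summing the geometric series yields $\lesssim 2^{-m\alpha}$. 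For $k < m$, the point is cancellation: since $\psi_m$ vanishes in a fixed Euclidean neighborhood of the origin for $m \geq 1$, and since $|M_x\xi| \simeq |\xi|$ uniformly in $x$, the symbol $q_m(x,\xi) = \psi_m(M_x\xi)$ vanishes in a uniform neighborhood of $\xi = 0$, so $Q_m$ annihilates every polynomial. Subtracting the non-isotropic Taylor polynomial $P_{x,L}$ of $f_k$ at base point $x$ of degree $L$, the remainder obeys $|f_k(y) - P_{x,L}(y)| \lesssim 2^{k(L+1-\alpha)} d(x,y)^{L+1}$ by the derivative bounds of the proposition, while the concentration of $k_m$ at non-isotropic scale $2^{-m}$ yields $\int |k_m(x,y)| d(x,y)^{L+1}\, dy \lesssim 2^{-m(L+1)}$. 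Combining gives $|Q_m f_k(x)| \lesssim 2^{-(m-k)(L+1)} 2^{-k\alpha}$, and choosing $L > \alpha$ and summing over $k < m$ completes part~(a).

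For part~(b), I would combine (a) with a near-orthogonality estimate. First I would observe that $Q_m f$ is smooth at non-isotropic scale $2^{-m}$: differentiating under the integral defining $Q_m f$ and throwing the derivatives onto $\psi_m(M_y\xi)$, one obtains $\|X'^\lambda \partial^\gamma (Q_m f)\|_{L^\infty} \lesssim 2^{m(2|\gamma|+|\lambda|)} \|Q_m f\|_{L^\infty}$, which by (a) is $\lesssim 2^{m(2|\gamma|+|\lambda|)} 2^{-m\alpha}$. Then I would rerun the cancellation argument of part~(a), now with $Q_l^*$ in place of $Q_m$ and $Q_m f$ in place of $f_k$: subtract the non-isotropic Taylor polynomial of $Q_m f$ at $x$ of degree $L$, bound the remainder by $2^{m(L+1)} \cdot 2^{-m\alpha} \cdot d(x,z)^{L+1}$, and integrate against $|k_l^*(x,z)|$, which satisfies $\int |k_l^*(x,z)| d(x,z)^{L+1}\, dz \lesssim 2^{-l(L+1)}$ by the non-isotropic concentration of the kernel of $Q_l^*$ at scale $2^{-l}$. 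This yields $|Q_l^* Q_m f(x)| \lesssim 2^{-(l-m)(L+1)} 2^{-m\alpha}$, and choosing $L = M$ gives the claim.

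The main obstacle is licensing the Taylor-subtraction step in part~(b), which requires that $Q_l^*$ applied to a polynomial produce something negligible. While $Q_l$ itself kills polynomials transparently (because $q_l(x,\xi) = \psi_l(M_x\xi)$ vanishes for $|\xi|$ small uniformly in $x$), its adjoint is described only abstractly via Theorem~\ref{thm:adjoint}, as $T_{a_l^*}$ for some $a_l^* \in S^{0,0}$ produced by passing through a compound symbol. To circumvent this, I would work directly with the compound representation $c(x,y,\xi) = \overline{q_l(y,\xi)}$ for $Q_l^*$, which manifestly vanishes for $|\xi|$ uniformly small: changing variables $u = \Theta_0(z,x)$ in the kernel and Taylor-expanding the non-constant Jacobian and polynomial factor in $u$, one checks that $Q_l^*$ applied to a polynomial produces only corrections weighted by higher non-isotropic moments of $\check\psi_l$, all of which are rapidly decaying in $l$ and therefore absorbable into the $2^{-M(l-m)}$ factor. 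This verification is local and explicit but somewhat delicate; it is the step I expect to require the most care.
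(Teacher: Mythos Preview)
The paper explicitly omits the proof of this lemma (see the sentence immediately preceding it: ``The key are the following two lemma (whose proofs we omit)''), so there is no in-paper argument to compare against. Your outline is a plausible reconstruction along the lines the authors presumably intend, and the overall strategy --- decompose $f$ via Proposition~\ref{prop:equivnonisoLip2}, split at the index $m$, use the uniform $L^1$-bound on the kernel of $Q_m$ for the pieces with $k\ge m$, and use moment cancellation for $k<m$ --- is the natural one in this framework.

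Two points deserve care. In part~(a), Proposition~\ref{prop:equivnonisoLip2} supplies the bounds $\|\partial_x^\gamma X'^\lambda f_k\|_{L^\infty}\lesssim 2^{k(2|\gamma|+|\lambda|-\alpha)}$ only for $|\gamma|+|\lambda|\le \alpha+1$, so the non-isotropic Taylor order $L$ you may use is constrained; you should check that the choice $L=\lfloor\alpha\rfloor$ (so that $L+1>\alpha$ and the geometric sum over $k<m$ closes) is covered by those derivative bounds.

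In part~(b), the phrasing ``$\|X'^\lambda\partial^\gamma(Q_mf)\|_{L^\infty}\lesssim 2^{m(2|\gamma|+|\lambda|)}\|Q_mf\|_{L^\infty}$'' is not what you actually prove: rather, $X'^\lambda\partial^\gamma Q_m$ is an operator of the same type as $Q_m$ with amplitude $2^{m(2|\gamma|+|\lambda|)}$, and one reruns part~(a) for that operator (its symbol still vanishes near $\xi=0$, so the cancellation step survives). More substantively, the obstacle you flag is genuine: because the change of variable $u=\Theta_0(z,x)$ carries a $z$-dependent Jacobian, $Q_l^*$ does not annihilate polynomials exactly. Your proposed fix --- Taylor-expand the Jacobian-weighted integrand in $u$ and use that all moments $\int u^\sigma\check\psi_l(u)\,du$ vanish --- is correct and yields $|Q_l^*P(x)|\lesssim_{L'} C(P)\,2^{-l(L'+1)}$ for every $L'$. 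Since the coefficients of the Taylor polynomial of $Q_mf$ grow like powers of $2^m$, you must choose $L'$ large relative to both $M$ and the degree $L$ to absorb those powers into the factor $2^{-M(l-m)}$; once that bookkeeping is done the argument closes.
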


\begin{proof}[Proof of Theorem~\ref{thm:twoflagHolder} (\ref{thm:twoflagHoldernoniso})] 
Suppose $a \in S^{\varepsilon,-2\varepsilon}$ or $S^{-\varepsilon,\varepsilon}$ for some $\varepsilon > 0$. Then 
$$T_a f = \sum_{k = 0}^{\infty} F_k, \quad \text{where} \quad F_k = T_a Q_k^* f.$$ 
We want to show that 
$$
\|\partial_x^{\gamma} X'^{\lambda} F_k\|_{L^{\infty}}\leq C 2^{k(2|\gamma|+|\lambda|-\alpha)}.
$$
But
$$
\|\partial_x^{\gamma} X'^{\lambda} F_k\|_{L^{\infty}} \leq \sum_{m \geq 0} \|\partial_x^{\gamma} X'^{\lambda} T_a Q_k^* Q_m f\|_{L^{\infty}} = \sum_{0 \leq m \leq k} + \sum_{m > k}.
$$
The sum over $m > k$ can be estimated by 
$$\sum_{m > k} C 2^{k(2|\gamma|+|\lambda|)} 2^{-m\alpha} = C  2^{k(2|\gamma|+|\lambda|)} 2^{-k\alpha}.$$
In order to take the sum over $0 \leq m \leq k$, let's fix one such $m$. Then
$$
\|\partial_x^{\gamma} X'^{\lambda} T_a Q_k^* Q_m f\|_{L^{\infty}} \leq \sum_{0 \leq l \leq \frac{k+m}{2}} + \sum_{l > \frac{k+m}{2}} \|\partial_x^{\gamma} X'^{\lambda} T_a Q_k^* Q_l^* Q_m f\|_{L^{\infty}} = I + II.
$$
The first sum is estimated by
\begin{align*}
|I| 
&\leq \sum_{0 \leq l \leq \frac{k+m}{2}} \|\partial_x^{\gamma} X'^{\lambda} T_a Q_k^* Q_l^*\|_{L^{\infty} \to L^{\infty}} \|Q_m f\|_{L^{\infty}} \\
& \leq \sum_{0 \leq l \leq \frac{k+m}{2}} C_M 2^{-M(k-l)} 2^{l(2|\gamma|+|\lambda|)} 2^{-m \alpha} \\
& \leq C_M 2^{-\frac{M}{2}(k-m)} 2^{k(2 |\gamma| + |\lambda|)} 2^{-m \alpha}
\end{align*}
for any $M \geq 0$. Next, the second sum is bounded by 
\begin{align*}
|II| 
&\leq \sum_{l > \frac{k+m}{2}} \|\partial_x^{\gamma} X'^{\lambda} T_a Q_k^*\|_{L^{\infty} \to L^{\infty}} \|Q_l^* Q_m f\|_{L^{\infty}} \\
& \leq \sum_{l > \frac{k+m}{2}} C_M 2^{k(2|\gamma|+|\lambda|)} 2^{-M(l-m)} 2^{-m \alpha} \\
& = C_M 2^{k(2 |\gamma| + |\lambda|)} 2^{-\frac{M}{2}(k-m)} 2^{-m \alpha} 
\end{align*}
which is the same bound as we have obtained in $I$. Now pick $M$ such that $M/2 > \alpha$. Then we can sum this over all $0 \leq m \leq k$, and bound this by $C 2^{k(2 |\gamma| + |\lambda|)} 2^{-k \alpha}$ as desired. 
\end{proof}

\section{Smoothing properties in $L^p$} \label{sect:smoothing}

\begin{thm} \label{thm:twoflagsmoothing}
Let $a \in S^{m,n}$ for some 
$$-1 < m < 0, \quad -(N-1) < n < 0.$$ For $p \geq 1$, define
$$\frac{1}{p^*} = \frac{1}{p} - \gamma, \quad \gamma := \min\left\{ \frac{|m+n|}{N}, \frac{|2m+n|}{N+1} \right\},$$ if $1/p > \gamma$.  
Then:
\begin{enumerate}[(i)]
\item $T_a \colon L^p \to L^{p^*}$ whenever $1 < p \leq p^* < \infty$;
\item \label{item:Thmtwoflagsmoothing_b} If in addition $$\frac{m+n}{N} \neq \frac{2m+n}{N+1},$$ i.e. if $n \ne m(N-1)$, then $T_a$ is weak-type $(1,1^*)$.
\end{enumerate}
\end{thm}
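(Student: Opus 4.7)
My plan is to derive the fractional-integration properties of $T_a$ from the kernel estimate of Theorem~\ref{thm:KernelTheta}(b), namely
$$|K(x,y)| \lesssim |x-y|^{-(N-1+n)}\,d(x,y)^{-(2+2m)}$$
for $d(x,y) \lesssim 1$ (with rapid decay outside a bounded neighborhood of the diagonal), by combining a Hedberg-style splitting with the strong maximal function $\mathcal{M}$ introduced earlier. The two candidate gains $|m+n|/N$ and $|2m+n|/(N+1)$ arise respectively as isotropic and non-isotropic fractional-integration exponents of $T_a$, obtained by splitting $T_a f(x) = \int K(x,y) f(y)\,dy$ at an isotropic scale $|x-y|=R$ or a non-isotropic scale $d(x,y)=\lambda$. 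The technical heart is a bi-annular dyadic decomposition of the region near the diagonal into shells
$$A_{j,i}(x) := \{y \in \mathbb{R}^N : |x-y| \sim 2^{-j},\, |\Theta_0(x,y)_N| \sim 2^{-i}\},\qquad i \geq j,$$
on each of which $|K|$ is essentially constant; the constraint $i \geq j$ reflects $|\Theta_0(x,y)_N| \lesssim |x-y|$.

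The first step is to establish the near-diagonal pointwise bounds
$$\int_{\{d(x,y)\le \lambda\}} |K(x,y) f(y)|\, dy \lesssim \lambda^{|2m+n|}\,\mathcal{M} f(x),\qquad \int_{\{|x-y|\le R\}} |K(x,y) f(y)|\, dy \lesssim R^{|m+n|}\,\mathcal{M} f(x),$$
for $0 < \lambda, R \leq 1$, by summing the contributions from the $A_{j,i}$. Shells in the regime $j \leq i \leq 2j$ (where $d \sim |\Theta_0(x,y)_N|^{1/2}$) are rectangular cubes of the type averaged by $\mathcal{M}_0$, so $\fint_{A_{j,i}} |f| \leq \mathcal{M} f(x)$. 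Shells in the regime $i \geq 2j$ (where $d \sim |x-y|$) are each individually too thin to be captured by any fixed maximal function; but for fixed $j$ their union $\bigcup_{i\geq 2j} A_{j,i}$ coincides with a non-isotropic ball of radius $\sim 2^{-j}$ and comparable volume, so these must be summed collectively and the resulting integral is controlled by $M_\mathcal{D} f \leq \mathcal{M} f$. For the far-diagonal part, an analogous dyadic bi-annular calculation yields $\|K(x,\cdot)\|_{L^{p'}(\{d(x,y)>\lambda\})} \lesssim \lambda^{-A_1(p)}$ and $\|K(x,\cdot)\|_{L^{p'}(\{|x-y|>R\})} \lesssim R^{-A_2(p)}$ for appropriate exponents $A_1, A_2$, whereupon H\"older's inequality bounds the corresponding far integrals by $\lambda^{-A_1(p)}\|f\|_{L^p}$ and $R^{-A_2(p)}\|f\|_{L^p}$.

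Balancing the near and far estimates via the standard Hedberg optimization produces, for each of the two gains $\gamma \in \{|m+n|/N,\, |2m+n|/(N+1)\}$, a pointwise inequality
$$|T_a f(x)| \lesssim \mathcal{M} f(x)^{1-p\gamma}\,\|f\|_{L^p}^{p\gamma}.$$
Raising to the $p^*$-th power, integrating in $x$, and using the $L^p$-boundedness of $\mathcal{M}$ on $\mathbb{R}^N$, one obtains $T_a: L^p \to L^{p^*}$ at each of the two corresponding values of $p^*$. Taking the smaller $p^*$, i.e. $\gamma = \min$, gives~(i).

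For~(ii), the hypothesis $n \neq m(N-1)$ is precisely the condition that $|m+n|/N \neq |2m+n|/(N+1)$, leaving some slack between the two candidate gains. I would prove the weak-$(1,1^*)$ endpoint by a Calder\'on--Zygmund decomposition $f = g + \sum_Q b_Q$ at level $\alpha > 0$, adapted to whichever of the isotropic or non-isotropic Calder\'on--Zygmund structure on $\mathbb{R}^N$ is associated with the smaller of the two gains. The good part is controlled by the strong-type estimate of~(i) at a nearby $p > 1$ combined with Chebyshev's inequality; the bad part is controlled by the kernel-difference estimates from Theorem~\ref{thm:KernelTheta}(b) together with the mean-zero condition on each $b_Q$, with the non-degeneracy $n \neq m(N-1)$ providing just enough room to match the target exponent $1^*$. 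The main obstacle is the bi-annular bookkeeping in the near-diagonal step, which is what yields the ``surprising'' gains $|m+n|$ and $|2m+n|$ instead of the weaker naive-composition gain $|m|/N + |n|/(N+1)$; in particular, the collective treatment of the thin shells with $i \geq 2j$ (outside the scope of $\mathcal{M}_0$) is essential. The weak endpoint~(ii) is also delicate because at the borderline $n = m(N-1)$ the two gains coincide and there is no remaining slack.
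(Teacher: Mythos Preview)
Your approach differs substantially from the paper's in both parts, and while your near-diagonal maximal estimates are essentially correct, the remaining steps have genuine gaps.

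For (i), the paper does not use a Hedberg argument. It first invokes the inclusions (\ref{eq:symbol_inclusion}) to reduce to the critical case $m=-\gamma$, $n=-(N-1)\gamma$, and then splits the kernel $k_0(x,u)$ of $T_a$ into three pieces according to whether $|u'|<|u''|$, $|u'|>|u''|^{1/2}$, or $|u''|\le|u'|\le|u''|^{1/2}$. The two outer pieces are dominated pointwise by a purely isotropic, respectively purely non-isotropic, fractional-integral kernel and are handled classically. For the intermediate piece the paper builds an explicit analytic family $S_z$, with kernel $K_z(u)=z^2\chi(|u''|/|u'|)\chi(|u'|^2/|u''|)\,|u'|^{-(N-1)(1-z)}|u''|^{-(1-z)}$; at $\text{Re}\,z=1$ this is bounded, while at $\text{Re}\,z=0$ Theorem~\ref{thm:KernelS00} shows $S_{iy}\in\Psi^{0,0}$ and hence $S_{iy}$ is bounded on every $L^q$ by Theorem~\ref{thm:Lptwoflag}. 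Stein interpolation then gives $S_\gamma\colon L^p\to L^{p^*}$, which controls the intermediate piece. Your Hedberg scheme, by contrast, requires that the far-diagonal $L^{p'}$ norm of $K$ have precisely the exponent that balances against the near gain to output $\gamma$; but a bi-annular computation shows the $i$-sum over $j\le i\le 2j$ is governed by the sign of $(1+m)p'-1$, and for $p<1/|m|$ (in particular for all admissible $p$ on the critical line, where $|m|=\gamma$) it is dominated by $i=2j$, producing a far exponent tied to the non-isotropic homogeneous dimension $N+1$ even under the isotropic cut $|x-y|>R$. The resulting optimization then yields $1/q>1/p-\gamma$, strictly weaker than claimed, so the pointwise Hedberg inequality you assert does not hold.

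For (ii), a Calder\'on--Zygmund decomposition is the wrong mechanism: CZ converts kernel regularity plus atom cancellation into a weak-$(1,1)$ bound, but for a fractional-type kernel the off-diagonal integral $\int_{\text{far}}|K(x,y)-K(x,c_Q)|\,dx$ carries a positive power of $r_Q$ that does not sum to something independent of the cube sizes. Nor can your Hedberg argument be pushed to $p=1$, since the strong maximal function $\mathcal{M}$ appearing in your near estimate is not weak-$(1,1)$. The paper's proof is much simpler: since $1^*>1$, weak-$L^{1^*}$ is a normed space, and by Minkowski the weak-$(1,1^*)$ bound follows at once from the single kernel inequality $\sup_{y}\|K(\cdot,y)\|_{L^{1^*,\infty}}\le C$, recorded as Proposition~\ref{prop:kernelweak1*}.
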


This easily implies that such $T_a$ maps $L^p$ to $L^q$ whenever $p \leq q < p^*$, $p \geq 1$. One can also show that these results are sharp, by homogeneity considerations about the classes $S^{m,0}$ and $S^{0,n}$.

Note that Theorems~\ref{thm:Lptwoflag} and \ref{thm:Sep-2epweak11} can be thought of as a version of the present theorem in the limiting case $\gamma = 0$. Furthermore, the result in the present theorem is stronger than the estimate predicted by the equation
$$\frac{1}{p^*} = \frac{1}{p} - \frac{|m|}{N} - \frac{|n|}{N+1},$$ which would be the one obtained if $S^{m,n}$ smoothes only like $S^{m,0} \circ S^{0,n}$. This can be seen in the figure below: suppose $P = (m,n)$ is on the `critical' dotted line, where $m = -\gamma$ and $n = -(N-1)\gamma$ for some $\gamma \in (0,1)$. Suppose further that $A = (-N\gamma,0)$, $B = (0,-(N+1)\gamma)$. Let $1/p^* = 1/p - \gamma$, $p > 1$. If $a \in S^{m,n}$ with $(m,n)$ on the solid lines, then Theorem~\ref{thm:twoflagsmoothing} says that $T_a$ maps $L^p$ to $L^{p^*}$. If $S^{m,n}$ were smoothing only like $S^{m,0} \circ S^{0,n}$, then only those symbols on the dashed line map $L^p$ to $L^{p^*}$.

\begin{center}
\psset{xunit=3cm}
\psset{yunit=2cm}
\begin{pspicture}(-1.5,-2.5)(0.5,1)
\psaxes[labels=none,ticks=none]{->}(0,0)(-1.5,-2.2)(0.5,0.5) 
\psline[linestyle=dotted]{-}(0,0)(-1,-2)
\psline[linestyle=solid,linewidth=0.01]{-*}(-0.9,0)(-0.3,-0.6)
\psline[linestyle=solid,linewidth=0.01](-0.3,-0.6)(0,-1.8)
\psline[linestyle=dashed,linewidth=0.01]{*-*}(-0.9,0)(0,-1.8)
\uput[0](0.5,0){$m$}
\uput[90](0,0.5){$n$}
\uput[90](-0.9,0){$A$}
\uput[0](0,-1.8){$B$}
\uput[45](-0.3,-0.6){$P$}
\end{pspicture}
\end{center}

To prove the part (i) of Theorem~\ref{thm:twoflagsmoothing}, by the inclusion relations (\ref{eq:symbol_inclusion}), one only needs to consider the case when $(m,n)$ is on the critical line, i.e. when $m=-\gamma$, $n=-(N-1)\gamma$, for some $\gamma \in (0,1)$. Let 
$$K_z(u) = z^2 \chi\left( \frac{|u''|}{|u'|} \right) \chi \left( \frac{|u'|^2}{|u''|} \right) \frac{1}{|u'|^{(N-1)(1-z)} |u''|^{1-z}}$$ where $\chi \in C^{\infty}_c([1/4,4])$ and is identically 1 on $[1/2,2]$. Then $$S_z f(x) = \int K_z(\Theta_0(x,y)) f(y) dy$$ is an analytic family of operators, and maps $L^1$ to $L^{\infty}$ when $\textrm{Re}\, z = 1$. Furthermore, $S_z$ maps $L^q$ to $L^q$ for all $1 < q < \infty$ when $\textrm{Re}\, z = 0$, with a bound that grows polynomially in $z$; this is because it can be shown, by Theorem~\ref{thm:KernelS00}, that $S_{iy}$ arise as pseudodifferential operators with symbols in $S^{0,0}$. Thus $S_{\gamma}$ maps $L^p$ to $L^{p^*}$ if $0 < \gamma < 1$, $p > 1$ and $p^* < \infty$. Now given $a \in S^{m,n}$ as in the statement of the theorem, one represents $T_a f(x)$ as $\int_{\mathbb{R}^N} f(y) k_0(x,\Theta_0(x,y)) dy$ as in Theorem~\ref{thm:KernelS00}. Then one can split $k_0$ into 3 parts, according to whether $|u'| < |u''|$, $|u'| > |u''|^{1/2}$, or $|u''| \leq |u'| \leq |u''|^{1/2}$. The contribution of the last part is bounded by $S_{\gamma}$. The rest can be bounded by purely isotropic or non-isotropic fractional integrals. Hence we are done.

To prove part (ii) of Theorem~\ref{thm:twoflagsmoothing}, since now $1^* \in (1,\infty)$, so that weak-$L^{1^*}$ is a normed space, one only need to show

\begin{prop} \label{prop:kernelweak1*} 
If $a \in S^{m,n}$, where $m$, $n$ satisfies the assumption in part (\ref{item:Thmtwoflagsmoothing_b}) of Theorem~\ref{thm:twoflagsmoothing}, then the kernel $K(x,y)$ of $T_a$ satisfies
$$
\sup_{y \in \mathbb{R}^N} \|K(x,y)\|_{L^{1^*,\infty}(dx)} \leq C.
$$
\end{prop}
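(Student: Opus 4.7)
My plan is to estimate the distribution function of $K(\cdot,y)$ directly from the pointwise kernel bounds in Theorem~\ref{thm:KernelTheta}(\ref{7b}), uniformly in $y$. With no derivatives applied and with the free parameter $M \geq 0$, that theorem gives
\[
|K(x,y)| \lesssim \frac{1}{|x-y|^{(N-1)+n+M}\, d(x,y)^{2(1+m)}}.
\]
Fix $y$. On the tail $|x-y| \geq 1$, using $d \simeq |x-y|$ and taking $M$ large makes $|K(x,y)| \lesssim |x-y|^{-(N+1+M)}$, whose contribution to any superlevel set of $K(\cdot,y)$ is harmlessly controlled. The real task is local: I change variables to $u = \Theta_0(x,y)$, a diffeomorphism with uniformly bounded Jacobian, writing $u = (u',u'') \in \mathbb{R}^{N-1} \times \mathbb{R}$, so that $|x-y| \simeq |u|$ and $d(x,y) \simeq |u'| + |u''|^{1/2}$.

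I would then partition $\{|u| \leq C\}$ into the three regions used elsewhere in the paper, reading off $|K|$ in each:
\begin{enumerate}[(I)]
\item $|u'|^2 \geq |u''|$: here $|u| \simeq |u'|$ and $d \simeq |u'|$, so $|K| \lesssim |u'|^{-(N+1+n+2m)}$;
\item $|u'| \leq |u''|$: here $|u| \simeq |u''|$ and $d \simeq |u''|^{1/2}$, so $|K| \lesssim |u''|^{-(N+m+n)}$;
\item $|u''| < |u'| < |u''|^{1/2}$: here $|u| \simeq |u'|$ and $d \simeq |u''|^{1/2}$, so $|K| \lesssim |u'|^{-(N-1+n)}\, |u''|^{-(1+m)}$.
\end{enumerate}
Writing $p_1 = N/(N+m+n) = 1/(1-\gamma_1)$ and $p_2 = (N+1)/(N+1+n+2m) = 1/(1-\gamma_2)$, so that $1^* = \min(p_1, p_2)$, direct calculations of the superlevel sets in Regions I and II --- routine integrals of $|u'|^{N-2}$ weighted by the relevant slab-thickness in $u''$, and vice versa --- give measures $\lesssim \lambda^{-p_2}$ and $\lesssim \lambda^{-p_1}$ respectively. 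Thus each ``pure'' region is individually weak-$L^{p_i}$ with $p_i$ matching its homogeneity.

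The heart of the matter is Region III. Setting $R_1 = |u'|$, $R_2 = |u''|$, $a = N-1+n > 0$, $b = 1+m > 0$, the set $\{|K|>\lambda\}$ inside Region III is carved out by $R_1^a R_2^b \lesssim \lambda^{-1}$ subject to $R_1^2 \leq R_2 \leq R_1$, and its measure is controlled by $\iint R_1^{N-2}\, dR_1\, dR_2$ over this domain. Integrating $R_2$ first splits the $R_1$-range at the threshold $R_1 = \lambda^{-1/(a+b)}$: below it the product constraint is inactive and the integral produces $\lesssim \lambda^{-N/(a+b)} = \lambda^{-p_1}$; above it the $R_2$-integral contributes $\lambda^{-1/b} R_1^{-a/b}$, and one must integrate $R_1^{N-2-a/b}$ between $\lambda^{-1/(a+b)}$ and the upper cutoff $\lambda^{-1/(a+2b)}$. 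A short calculation identifies the exponent $N-2-a/b = -1$ with the precise condition $n = (N-1)m$; this is the unique case in which the $R_1$-integral produces a logarithm instead of a clean power of $\lambda$. Excluding it, the upper piece contributes $\lesssim \lambda^{-p_2}$, so Region III contributes $\lesssim \lambda^{-p_1} + \lambda^{-p_2}$ in total.

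Assembling, $|\{|K(\cdot,y)|>\lambda\}| \lesssim \lambda^{-p_1} + \lambda^{-p_2} \lesssim \lambda^{-\min(p_1,p_2)} = \lambda^{-1^*}$ for $\lambda \geq 1$, uniformly in $y$. For $\lambda \leq 1$ the local region has measure $O(1) \leq \lambda^{-1^*}$ trivially, while the tail with $M$ large contributes at most $\lambda^{-N/(N+1+M)} \leq \lambda^{-1^*}$, completing the estimate. The sole real obstacle is the Region III integral and the logarithmic divergence at $n=(N-1)m$, which is exactly why this locus must be excluded from the hypothesis; everything else is power-weight bookkeeping.
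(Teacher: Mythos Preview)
The paper itself omits the proof of this proposition entirely (``We omit the proof''), so there is no argument to compare your approach against. Your proposal is correct and follows what is the natural route here: you invoke the pointwise kernel bound of Theorem~\ref{thm:KernelTheta}(\ref{7b}) with $M=0$, pass to the coordinates $u=\Theta_0(x,y)$ (whose Jacobian is uniformly controlled, giving the uniformity in $y$), and estimate the superlevel sets on the same three regions $|u'|^2 \geq |u''|$, $|u'|\leq |u''|$, and $|u''|<|u'|<|u''|^{1/2}$ that the paper uses in its kernel analysis and in the proof of part (i) of Theorem~\ref{thm:twoflagsmoothing}. Your bookkeeping is accurate: Regions I and II produce $\lambda^{-p_2}$ and $\lambda^{-p_1}$ respectively, and in Region III the $R_1$-integral of $R_1^{N-2-a/b}$ between $\lambda^{-1/(a+b)}$ and $\lambda^{-1/(a+2b)}$ yields endpoint contributions $\lambda^{-p_1}$ and $\lambda^{-p_2}$, with a logarithm precisely when $N-2-a/b=-1$, i.e.\ $(N-1)(1+m)=N-1+n$, i.e.\ $n=(N-1)m$. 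This is exactly the excluded locus in the hypothesis, and explains its role. The handling of $\lambda\leq 1$ and of the far-field $|x-y|\geq 1$ via large $M$ is routine. In short, your argument would serve as a complete proof of the proposition.
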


We omit the proof.
 
\section{The operators on a compact manifold}

Let $M$ be a smooth manifold of real dimension $N$, and suppose a distribution $\mathcal{D}$ of codimension 1 tangent subspaces of $M$ is given. For simplicity, we will assume that $M$ is compact. We will now construct an algebra of pseudodifferential operators on $M$, that is adapted to the distribution $\mathcal{D}$, and that has mixed homogeneities. We will then see that most theorems in the previous sections continue to hold in our present context.

First, given any point $p$ on $M$, there exists a contractible open set $U$ containing $p$, a coordinate chart $x \colon U \simeq B(0,1) \subset \mathbb{R}^N$, and a frame of tangent vectors $X_1, \dots, X_N$ on $U$, such that $\mathcal{D}$ is spanned by $X_1, \dots, X_{N-1}$ at every point in $U$. Such a coordinate system is said to be an admissible coordinate system on $M$. We identify $U$ with an open subset of $\mathbb{R}^N$ via such a  coordinate chart $x$, and transplant the distribution $\mathcal{D}$ from $U$ onto this open subset (which we will still denote by $\mathcal{D}$ by abuse of notation). We extend this transplanted $\mathcal{D}$ into all of $\mathbb{R}^N$ as in our discussion in Section~\ref{subsect:Theta}.
A linear operator $S \colon C^{\infty}(M) \to C^{\infty}(M)$ is said to be a pseudodifferential operator of order $(m,n)$ adapted to $\mathcal{D}$, written $S \in \Psi^{m,n}(\mathcal{D})$, if the following holds:
\begin{enumerate}[(a)]
\item For any admissible coordinate chart $x \colon U \to \mathbb{R}^N$ and any $\chi_1, \chi_2 \in C^{\infty}_c(U)$, the operator $\chi_1 S \chi_2$ is given by $T_a$ for some $a \in S^{m,n}(\mathcal{D})$ in the coordinate system $x$.
\item For any $\psi_1$, $\psi_2 \in C^{\infty}_c(M)$ with disjoint support, there exists a smooth kernel $k(x,y) \in C^{\infty}(M \times M)$, such that
$$\psi_1 S \psi_2 f(x) = \int_M k(x,y) f(y) dy$$
for all $f \in C^{\infty}(M)$.
\end{enumerate}
We remark that the class of operators $\Psi^{m,n}(\mathcal{D})$ is well-defined, and invariant under diffeomorphisms. We will write $\Psi^{m,n}$ for $\Psi^{m,n}(\mathcal{D})$ when there is no confusion about the distribution $\mathcal{D}$ that is given. Again we can define an isotropic class of pseudodifferential operators, which we denote by $\Psi^m(\mathcal{D})$, and a non-isotropic class of pseudodifferential operators, which we denote by $\Psi^n_{\mathcal{D}}$.

Next, there is a counterpart, in our present context, of many of the previous theorems. For example,

\begin{thm} \label{thm:compose2}
If $S_1 \in \Psi^{m_1,n_1}$ and $S_2 \in \Psi^{m_2,n_2}$, then $S_1 \circ S_2 \in \Psi^{m,n},$ where $m = m_1 + m_2$, $n = n_1 + n_2$.
\end{thm}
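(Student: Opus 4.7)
The plan is to verify the two conditions (a) and (b) in the definition of $\Psi^{m,n}(\mathcal{D})$ for $S := S_1 \circ S_2$ with $m = m_1 + m_2$, $n = n_1 + n_2$, reducing everything to the Euclidean composition Theorem~\ref{thm:compose} together with the pseudolocality of $S_1$ and $S_2$. Two preliminary observations form the backbone. First, any operator on $M$ with a smooth kernel on $M \times M$ lies in $\Psi^{m',n'}(\mathcal{D})$ for every $(m',n')$: in an admissible chart, cutting by $\chi_1, \chi_2 \in C_c^\infty(U)$ produces a smooth kernel compactly supported in $U \times U$, whose associated symbol is then in $S^{-\infty} \subset S^{m',n'}(\mathcal{D})$. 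Second, the smoothing operators form a two-sided ideal under composition with operators in our classes, since the latter and their formal transposes preserve $C^\infty(M)$ continuously, and hence can be applied in either variable of a smooth kernel to return a smooth kernel.

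For condition (b), let $\psi_1, \psi_2 \in C_c^\infty(M)$ have disjoint supports and choose $\eta \in C^\infty(M)$ equal to $1$ on a neighborhood of $\mathrm{supp}\,\psi_1$ with $\mathrm{supp}\,\eta$ disjoint from $\mathrm{supp}\,\psi_2$. Decompose
$$
\psi_1 S_1 S_2 \psi_2 \;=\; (\psi_1 S_1)\,(\eta S_2 \psi_2) \;+\; \bigl(\psi_1 S_1 (1-\eta)\bigr)\,(S_2 \psi_2).
$$
In the first summand, $\eta$ and $\psi_2$ have disjoint supports, so $\eta S_2 \psi_2$ is smoothing by condition (b) of $S_2$; in the second, $\psi_1$ and $1-\eta$ have disjoint supports, so $\psi_1 S_1 (1-\eta)$ is smoothing by condition (b) of $S_1$. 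The ideal property then shows both summands, and hence $\psi_1 S \psi_2$, are smoothing.

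For condition (a), fix an admissible chart $x \colon U \to \mathbb{R}^N$ and $\chi_1, \chi_2 \in C_c^\infty(U)$. Pick $\eta \in C_c^\infty(U)$ equal to $1$ on $\mathrm{supp}\,\chi_1 \cup \mathrm{supp}\,\chi_2$ and $\tilde\eta \in C_c^\infty(U)$ equal to $1$ on $\mathrm{supp}\,\eta$, so that $\eta = \tilde\eta\,\eta$. Then
$$
\chi_1 S_1 S_2 \chi_2 \;=\; (\chi_1 S_1 \tilde\eta)\,(\eta S_2 \chi_2) \;+\; \chi_1 S_1 (1-\eta) S_2 \chi_2.
$$
For the first term, condition (a) for $S_i$ realizes $\chi_1 S_1 \tilde\eta$ and $\eta S_2 \chi_2$ in the chart as $T_{a_i}$ with $a_i \in S^{m_i,n_i}(\mathcal{D})$; Theorem~\ref{thm:compose} then produces $a \in S^{m,n}(\mathcal{D})$ with $T_{a_1} T_{a_2} = T_a$. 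For the second term, choose $\zeta \in C_c^\infty(M)$ equal to $1$ on a neighborhood of $\mathrm{supp}\,\eta$, and split $1 - \eta = (1-\eta)\zeta + (1-\zeta)$; here $(1-\eta)\zeta$ is disjoint from $\mathrm{supp}\,\chi_2$ and $1-\zeta$ is disjoint from $\mathrm{supp}\,\chi_1$, so by pseudolocality of $S_1$ and $S_2$ and the ideal property, $\chi_1 S_1 (1-\eta) S_2 \chi_2$ is smoothing with kernel compactly supported in $U \times U$, and thus equals $T_e$ for some $e \in S^{-\infty}$ in the chart. Hence $\chi_1 S \chi_2 = T_{a+e}$ with $a+e \in S^{m,n}(\mathcal{D})$.

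The main obstacle is bookkeeping: arranging the nested cutoffs $\eta$, $\tilde\eta$, $\zeta$ so that every troublesome composition splits cleanly into a piece handled by the Euclidean Theorem~\ref{thm:compose} inside a single chart, and a piece that is smoothing by pseudolocality of $S_1$ or $S_2$. The one ``soft'' ingredient that must be recorded en route is the continuity on $C^\infty(M)$ of operators in $\Psi^{m',n'}$ and of their transposes, which underpins the two-sided ideal property of smoothing operators.
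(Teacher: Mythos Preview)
Your argument is correct; the paper states Theorem~\ref{thm:compose2} without proof, so there is no proof of its own to compare against. Your reduction via nested cutoffs and pseudolocality to the Euclidean composition result (Theorem~\ref{thm:compose}) is the standard and expected route, and the two preliminary observations you record (smooth-kernel operators lie in every $\Psi^{m',n'}$, and smoothing operators form a two-sided ideal) are exactly the soft facts that make the bookkeeping go through.

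One small tightening: in the verification of condition~(a), take $\eta$ equal to $1$ on a \emph{neighborhood} of $\mathrm{supp}\,\chi_1 \cup \mathrm{supp}\,\chi_2$, not merely on the set itself. As written, $\mathrm{supp}\,((1-\eta)\zeta)$ is contained in the closure of $\{\eta<1\}$, which could touch $\mathrm{supp}\,\chi_2$; with the neighborhood version, the supports are genuinely disjoint and condition~(b) for $S_2$ applies cleanly. The same care is already present in your choice of $\zeta$ (equal to $1$ on a neighborhood of $\mathrm{supp}\,\eta$), so this is just a matter of matching that precision.
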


For the next set of results, we need to pick a smooth volume form, and define some function spaces on $M$. To begin with, we define $L^p(M)$ and weak-$L^p(M)$ with respect to any smooth volume form on $M$. Moreover, we define $\Lambda^{\alpha}(M)$ to be the set of all functions $f$, such that $\chi f \in \Lambda^{\alpha}$ on $\mathbb{R}^N$ whenever $\chi$ is a smooth cut-off supported in an admissible coordinate chart. Similarly we define $\Gamma^{\alpha}(M)$.

\begin{thm} \label{thm:adjoint2}
Let $S \in \Psi^{m,n}$. Then the adjoint $S^*$ of $S$ with respect to any smooth measure on $M$ is in $\Psi^{m,n}$.
\end{thm}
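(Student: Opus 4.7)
The plan is to verify the two defining properties (a) and (b) of $\Psi^{m,n}(\mathcal{D})$ for the adjoint $S^*$. Property (b) will be essentially immediate: if $\psi_1, \psi_2 \in C^{\infty}_c(M)$ have disjoint supports then $\psi_2 S \psi_1$ has, by hypothesis, a smooth kernel $k(x,y)$, and hence $\psi_1 S^* \psi_2 = (\psi_2 S \psi_1)^*$ has the kernel $\overline{k(y,x)}$ (possibly adjusted by the smooth ratio of density factors of $d\mu$), which is again smooth on $M \times M$.

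For property (a), I would fix an admissible chart $x \colon U \to \mathbb{R}^N$ and cutoffs $\chi_1, \chi_2 \in C^{\infty}_c(U)$, and produce $b \in S^{m,n}(\mathcal{D})$ with $\chi_1 S^* \chi_2 = T_b$ in this chart. The basic identity is $\chi_1 S^* \chi_2 = (\chi_2 S \chi_1)^*$, where the adjoint is taken in $L^2(M, d\mu)$. Since $S \in \Psi^{m,n}(\mathcal{D})$, property (a) for $S$ supplies $a \in S^{m,n}(\mathcal{D})$ with $\chi_2 S \chi_1 = T_a$ in the chart. Writing $d\mu = \rho(x)\, dx$ locally with $\rho$ smooth and positive, I would extend $\rho$ to a smooth positive function $\tilde{\rho}$ on all of $\mathbb{R}^N$ with uniformly bounded derivatives of all orders and uniformly bounded $\tilde{\rho}^{-1}$, agreeing with $\rho$ on a neighborhood containing $\text{supp}(\chi_1) \cup \text{supp}(\chi_2)$. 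A short calculation pairing against test functions then identifies the $L^2(\tilde{\rho}\, dx)$-adjoint of $T_a$ with $M_{\tilde{\rho}^{-1}} \circ T_{a^*} \circ M_{\tilde{\rho}}$, where $M_{\phi}$ denotes multiplication by $\phi$ and $a^* \in S^{m,n}(\mathcal{D})$ is supplied by the Euclidean adjoint Theorem~\ref{thm:adjoint}. Since multiplication by a smooth function with uniformly bounded derivatives lies in $S^0 \subset S^{0,0}$, an application of the composition Theorem~\ref{thm:compose} produces $b \in S^{m,n}(\mathcal{D})$ with $M_{\tilde{\rho}^{-1}} T_{a^*} M_{\tilde{\rho}} = T_b$.

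The main obstacle, which is rather mild, will be checking that the Euclidean operator $T_b$ defined on all of $\mathbb{R}^N$ genuinely represents $\chi_1 S^* \chi_2$ on the chart, independently of the arbitrary extension $\tilde{\rho}$. This should follow from the fact that the kernel of $T_a = \chi_2 S \chi_1$ is supported in $\text{supp}(\chi_2) \times \text{supp}(\chi_1)$, so the kernel of its adjoint is supported in $\text{supp}(\chi_1) \times \text{supp}(\chi_2)$, on which $\tilde{\rho} = \rho$; the $\tilde{\rho}$-adjoint on $\mathbb{R}^N$ therefore restricts to the genuine $L^2(M, d\mu)$-adjoint in the chart. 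Combined with (b), this will yield $S^* \in \Psi^{m,n}(\mathcal{D})$.
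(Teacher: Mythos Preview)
Your argument is correct and is precisely the standard reduction one would expect: localize, invoke the Euclidean adjoint result (Theorem~\ref{thm:adjoint}), and absorb the density factor of the smooth measure via the composition Theorem~\ref{thm:compose}, using that multiplication by a bounded smooth function lies in $S^0 \subset S^{0,0}$. The handling of property~(b) and of the extension $\tilde{\rho}$ is also fine, for the reason you give: the Schwartz kernel of $T_a$ is supported in $\text{supp}(\chi_2)\times\text{supp}(\chi_1)$, so the adjoint kernel does not see the extension outside the chart.

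As for comparison with the paper: there is nothing to compare against. The paper \emph{does not} give a proof of Theorem~\ref{thm:adjoint2}; it is simply listed among the manifold counterparts of the Euclidean results, with the remark earlier in the paper that ``the full proofs of many of the results stated below are rather lengthy, so only sketchy indications of the main ideas are given here. The details, as well as some further extensions, will appear elsewhere.'' Your proposal is exactly the natural way to fill in this omitted detail, and it goes through without issue.
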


\begin{thm} \label{thm:Lptwoflag2}
If $S \in \Psi^{0,0}$, then $S$ preserves $L^p(M)$, for all $1 < p < \infty$.
\end{thm}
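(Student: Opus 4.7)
The plan is to reduce Theorem~\ref{thm:Lptwoflag2} to its Euclidean analogue, Theorem~\ref{thm:Lptwoflag}, by a standard partition-of-unity argument exploiting compactness of $M$. First, I would cover $M$ by finitely many admissible coordinate charts $(U_\ell, x_\ell)$, and choose a partition of unity $\{\chi_i\}_{i=1}^L$ on $M$ refined enough so that whenever $\mathrm{supp}(\chi_i) \cap \mathrm{supp}(\chi_j) \neq \emptyset$, the union $\mathrm{supp}(\chi_i) \cup \mathrm{supp}(\chi_j)$ lies inside a single common chart $U_{\ell(i,j)}$. Writing $1 = \sum_i \chi_i$ and inserting on both sides, we get
$$S = \sum_{i,j} \chi_i \, S \, \chi_j,$$
a finite sum, so it suffices to bound each term on $L^p(M)$.

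Next, I would split the pairs $(i,j)$ into a \emph{near} part (overlapping supports) and a \emph{far} part (disjoint supports). For a far pair, property (b) of the definition of $\Psi^{m,n}(\mathcal{D})$ on $M$ gives that $\chi_i S \chi_j$ has a smooth, compactly supported Schwartz kernel on $M \times M$. Since $M$ is compact, such an operator maps $L^1(M)$ continuously into $L^\infty(M)$, and hence is bounded on $L^p(M)$ for every $1 \le p \le \infty$.

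For a near pair $(i,j)$, both cut-offs lie in the common chart $U = U_{\ell(i,j)}$, so by property (a) of the definition there exists $a_{ij} \in S^{0,0}(\mathcal{D})$ (after identifying $U$ with an open subset of $\mathbb{R}^N$ via $x_{\ell(i,j)}$ and extending $\mathcal{D}$ to all of $\mathbb{R}^N$ as in Section~\ref{subsect:Theta}) such that $\chi_i S \chi_j$ is represented by $T_{a_{ij}}$ in this chart. By Theorem~\ref{thm:Lptwoflag}, $T_{a_{ij}}$ is bounded on $L^p(\mathbb{R}^N)$ for $1 < p < \infty$. Since any smooth volume form on $M$ has Jacobian bounded above and below relative to the Lebesgue measure on the compact set $\mathrm{supp}(\chi_i) \cup \mathrm{supp}(\chi_j)$, and since both $\chi_j f$ and $\chi_i \cdot T_{a_{ij}}(\chi_j f)$ are supported in $U$, the $L^p(M)$-norm and the $L^p(\mathbb{R}^N)$-norm are comparable on these functions. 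Hence $\chi_i S \chi_j$ is bounded on $L^p(M)$.

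Summing the finitely many bounded contributions yields $\|Sf\|_{L^p(M)} \lesssim \|f\|_{L^p(M)}$ for $f \in C^\infty(M)$, and density extends $S$ to a bounded operator on $L^p(M)$. The argument is routine once Theorem~\ref{thm:Lptwoflag} is in hand; the only delicate point is choosing the partition of unity finely enough that every near pair lies in a single admissible chart, so that the local defining property of $\Psi^{0,0}(\mathcal{D})$ can be invoked directly. I do not expect a serious obstacle here.
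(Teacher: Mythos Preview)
Your argument is correct and is precisely the standard reduction one expects here. The paper does not give a separate proof of Theorem~\ref{thm:Lptwoflag2}; it merely states it as the manifold counterpart of Theorem~\ref{thm:Lptwoflag}, and the partition-of-unity reduction you wrote out---splitting $\sum_{i,j}\chi_i S\chi_j$ into near pairs handled by property~(a) plus Theorem~\ref{thm:Lptwoflag}, and far pairs handled by the smoothing property~(b)---is exactly the intended passage from $\mathbb{R}^N$ to $M$.
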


\begin{thm} \label{thm:Lptwoflagideal2}
Suppose $S \in \Psi^{\ep,-2\ep}$ or $\Psi^{-\ep,\ep}$ for some $\ep > 0$. Then:
\begin{enumerate}[(i)]
\item $S$ is of weak-type $(1,1)$; and
\item $S$ preserves $\Lambda^{\alpha}(M)$ and $\Gamma^{\alpha}(M)$ for all $\alpha > 0$.
\end{enumerate}
\end{thm}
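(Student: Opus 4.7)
The plan is to reduce Theorem~\ref{thm:Lptwoflagideal2} to its $\mathbb{R}^N$ counterparts, namely Theorem~\ref{thm:Sep-2epweak11} and Theorem~\ref{thm:twoflagHolder}, via a partition of unity subordinate to admissible coordinate charts on the compact manifold $M$.

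First I would cover $M$ by finitely many open sets $U_1,\dots,U_L$, each carrying an admissible coordinate chart and a frame realizing $\mathcal{D}$. Choose a smooth partition of unity $\{\chi_i\}$ subordinate to this cover, together with cut-offs $\tilde{\chi}_i \in C^{\infty}_c(U_i)$ satisfying $\tilde{\chi}_i \equiv 1$ on a neighbourhood of $\mathrm{supp}\,\chi_i$. Then
$$
S = \sum_{i=1}^L \chi_i\, S\, \tilde{\chi}_i + \sum_{i=1}^L \chi_i\, S\, (1-\tilde{\chi}_i).
$$
By property (b) in the definition of $\Psi^{m,n}(\mathcal{D})$, each summand $\chi_i\, S\, (1-\tilde{\chi}_i)$ in the second sum has a $C^{\infty}$ integral kernel on $M \times M$ (since $\chi_i$ and $1-\tilde{\chi}_i$ have disjoint supports), so that piece automatically maps $L^1(M)$ into $L^{\infty}(M)$ (hence is of weak-type $(1,1)$) and preserves $\Lambda^{\alpha}(M)$ and $\Gamma^{\alpha}(M)$ for every $\alpha>0$. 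Thus it suffices to handle each localized piece $\chi_i\, S\, \tilde{\chi}_i$.

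Next I would fix $i$ and pass to the coordinate chart $x\colon U_i \simeq B(0,1)$. By property (a), $\chi_i\, S\, \tilde{\chi}_i$ is represented in these coordinates by $T_{a_i}$ for some $a_i \in S^{\ep,-2\ep}(\mathcal{D})$ or $S^{-\ep,\ep}(\mathcal{D})$, after extending the locally defined distribution to all of $\mathbb{R}^N$ as in Section~\ref{subsect:Theta}. Theorem~\ref{thm:Sep-2epweak11} then yields the weak-$(1,1)$ bound for $T_{a_i}$ on $\mathbb{R}^N$, while Theorem~\ref{thm:twoflagHolder} yields the preservation of $\Lambda^{\alpha}$ and $\Gamma^{\alpha}$ on $\mathbb{R}^N$. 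These bounds pull back to bounds for $\chi_i\, S\, \tilde{\chi}_i$ on $M$ because the Lebesgue measure in the chart and any fixed smooth volume form on $M$ are mutually comparable on $\mathrm{supp}\,\tilde{\chi}_i$, and by the very definitions of $\Lambda^{\alpha}(M)$ and $\Gamma^{\alpha}(M)$ via cut-offs in admissible charts. Summing over the finitely many values of $i$ completes the argument.

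The main point requiring care, rather than a serious obstacle, will be the well-posedness of the definitions of $\Lambda^{\alpha}(M)$ and $\Gamma^{\alpha}(M)$: the Lipschitz norms obtained from different admissible charts must be comparable, which amounts to the invariance of $\Lambda^{\alpha}$ and $\Gamma^{\alpha}$ under admissible changes of coordinates preserving $\mathcal{D}$. For $\Lambda^{\alpha}$ this is classical; for $\Gamma^{\alpha}$ it follows from the invariance of the non-isotropic quasi-distance $d$ and of the good frame derivatives $X'$ under such coordinate changes, which rests on the geometric set-up of Section~\ref{sect:prelim} and the coordinate invariance already recorded in Theorem~\ref{thm:admisscoord}. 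Once this is in place, the argument is a straightforward transfer via partition of unity.
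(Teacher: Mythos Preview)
Your proposal is correct and matches the approach the paper implicitly takes: in Section~7 the paper states Theorem~\ref{thm:Lptwoflagideal2} (along with the other manifold versions) without a separate proof, simply presenting it as the counterpart of Theorems~\ref{thm:Sep-2epweak11} and~\ref{thm:twoflagHolder} once the class $\Psi^{m,n}(\mathcal{D})$ on $M$ is defined via admissible charts and the smoothing-off-diagonal condition. Your partition-of-unity reduction, splitting into near-diagonal pieces handled by the $\mathbb{R}^N$ theorems and off-diagonal pieces with smooth kernels, is exactly the intended transfer, and your remark about the coordinate invariance of $\Gamma^{\alpha}$ is the only point requiring a check beyond bookkeeping.
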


Furthermore, we have:

\begin{thm} \label{thm:Lptwoflagsmoothing2}
Suppose $-1 < m < 0$, $-(N-1) < n < 0$. For $p \geq 1$, define $p^*$ as in Theorem~\ref{thm:twoflagsmoothing}.
Then for $S \in \Psi^{m,n}$, we have:
\begin{enumerate}[(i)]
\item $S \colon L^p(M) \to L^{q}(M)$ whenever $p > 1$ and $q \leq p^* < \infty$;
\item If in addition $n \ne m(N-1)$, then $S$ is weak-type $(1,1^*)$.
\end{enumerate}
\end{thm}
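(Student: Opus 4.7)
The plan is to reduce the statement on the compact manifold $M$ to its Euclidean counterpart, Theorem~\ref{thm:twoflagsmoothing}, via a finite partition of unity. Cover $M$ by finitely many open sets $U_1, \dots, U_L$, each contained in an admissible coordinate chart, and choose a subordinate smooth partition of unity $\{\chi_\alpha\}_{\alpha=1}^L$ together with cutoffs $\tilde\chi_\alpha \in C_c^\infty(U_\alpha)$ with $\tilde\chi_\alpha \equiv 1$ on $\operatorname{supp} \chi_\alpha$. Split
$$
S f = \sum_{\alpha=1}^L \tilde\chi_\alpha\, S\,(\chi_\alpha f) + \sum_{\alpha=1}^L (1-\tilde\chi_\alpha)\, S\,(\chi_\alpha f).
$$
Each summand in the second sum involves cutoffs with disjoint supports, so by property (b) of the definition of $\Psi^{m,n}(\mathcal{D})$ it is given by a smooth kernel on $M \times M$. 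Since $M$ is compact, this off-diagonal remainder is bounded from $L^p(M)$ into $L^q(M)$ for every $p,q \in [1,\infty]$, and in particular satisfies both the strong estimate in (i) and the weak-type $(1,1^*)$ estimate in (ii) trivially.

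For the main sum, fix $\alpha$. By property (a), read in the admissible chart on $U_\alpha$, the operator $\tilde\chi_\alpha S \chi_\alpha$ is realized as $T_{a_\alpha}$ for some $a_\alpha \in S^{m,n}(\mathcal{D})$ on $\mathbb{R}^N$, after extending $\mathcal{D}$ from the chart to all of $\mathbb{R}^N$ as in Section~\ref{subsect:Theta}; the extension does not affect the action on functions supported in the chart. Because $\chi_\alpha f$ is supported in a compact subset of the chart, its $L^p(\mathbb{R}^N)$ norm is comparable to its $L^p(M)$ norm, and analogously for $\tilde\chi_\alpha T_{a_\alpha}(\chi_\alpha f)$ in $L^{p^*}$. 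For part (i), applying Theorem~\ref{thm:twoflagsmoothing}(i) to $T_{a_\alpha}$ on $\mathbb{R}^N$ yields $L^p(M)\to L^{p^*}(M)$ boundedness of each $\tilde\chi_\alpha S \chi_\alpha$; summing over the finitely many $\alpha$ gives $S\colon L^p(M)\to L^{p^*}(M)$, and since $M$ has finite measure the bound for any $q\le p^*$ follows by H\"older's inequality. For part (ii), under the additional hypothesis $n\ne m(N-1)$, Theorem~\ref{thm:twoflagsmoothing}(ii) shows that each $T_{a_\alpha}$ is of weak-type $(1,1^*)$ on $\mathbb{R}^N$; since the hypotheses on $m,n$ force $1^* \in (1,\infty)$, the space $L^{1^*,\infty}$ carries an equivalent norm, so the finite sum of weak-type $(1,1^*)$ operators remains weak-type $(1,1^*)$ on $M$.

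There is no serious obstacle here, because the analytical substance is entirely carried by Theorem~\ref{thm:twoflagsmoothing}; the remaining work is bookkeeping to pass between local charts and the global manifold. The only minor points requiring care are that part (b) genuinely furnishes an $L^p$--$L^q$ bound on the off-diagonal pieces (clear from compactness of $M$), and that weak-$L^{1^*}$ tolerates finite summation, which succeeds precisely because the hypothesis $-1 < m < 0$ and $-(N-1) < n < 0$ places $1^*$ strictly above $1$, making weak-$L^{1^*}$ normable. The same argument, applied in parallel, adapts to the analogous manifold versions of Theorems~\ref{thm:compose}, \ref{thm:adjoint}, \ref{thm:Lptwoflag}, and \ref{thm:Sep-2epweak11}.
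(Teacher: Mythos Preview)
Your proposal is correct and follows exactly the approach the paper has in mind: the paper does not give a separate proof of Theorem~\ref{thm:Lptwoflagsmoothing2} but simply lists it among the manifold counterparts of the Euclidean theorems, the passage being the standard localization via a finite partition of unity together with properties~(a) and~(b) in the definition of $\Psi^{m,n}(\mathcal{D})$. Your handling of the off-diagonal smoothing pieces, the use of H\"older on the finite-measure space for $q\le p^*$, and the normability of $L^{1^*,\infty}$ to sum the weak-type bounds are all in order.
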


\section{Applications}

Suppose now $M$ is the boundary of a smoothly bounded strongly pseudoconvex domain $\Omega$ in $\mathbb{C}^{d}$, $d \geq 2$. Then there is a natural distribution $\mathcal{D}$ of tangent subspaces on $M$, namely those spanned by the real and imaginary parts of the $(1,0)$ vectors that are tangent to $M$. 

One can then show that the relative solution operator $N$ of $\Box_b$ is an operator in the class $\Psi_{\mathcal{D}}^{-2}$. This holds because near the diagonal of $M \times M$, the kernel of $N$ is, up to better error terms, of the form
$N_0(\Theta(x,y))$, where $N_0$ is the relative solution operator of the standard $\Box_b$ on the Heisenberg group, and $\Theta(x,y)$ is defined locally by the exponential map as in (\ref{eq:Thetaexp}); see \cite{MR0367477}. One can then invoke an analog of Theorem~\ref{thm:KernelThetaEasyConverse} for $S^n_{\mathcal{D}}$ in place of $S^{m,n}$ in order to conclude the argument.

Next, it can be shown that the Szeg\"o projection $\textbf{S}$ on $M$ is in $\Psi^0_{\mathcal{D}}$, but one can say more about it: it is also in $\Psi^{\varepsilon,-2\varepsilon}$ for all $\varepsilon > 0$. In fact, since $\textbf{S}$ is a projection, i.e. $\textbf{S} = \textbf{S}^k$ for all $k$, by Theorem~\ref{thm:compose2}, it suffices to show that $\textbf{S} \in \Psi^{1,-2}$. But in the terminology of \cite{MR0450623}, the kernel $\textbf{S}(x,y)$ of $\textbf{S}$ is a kernel of weight $0$. Thus by Lemma 2 in that paper, there exists a kernel $K(x,y)$ of weight $1$, such that $\textbf{S}(x,y) = T_x K(x,y)$ for some vector field $T$ that is transverse to $\mathcal{D}$. ($T_x$ indicates that the derivative is in the $x$-variable.) Now a kernel of weight $1$ is an operator of class $\Psi^{-2}_{\mathcal{D}}$; this follows again from an analog of Theorem~\ref{thm:KernelThetaEasyConverse} for $S^n_{\mathcal{D}}$. It follows that $\textbf{S} \in \Psi^{1,-2}$ as desired.

Finally, in solving the $\overline{\partial}$-Neumann problem on $\Omega$, one is led to invert a Dirichlet-to-$\overline{\partial}$-Neumann operator $\Box^+$. As can be shown using \cite{MR0499319}, on p.110 when $d \geq 3$, and on p.118 when $d = 2$, and also using Propositions 3.2 through 3.5 of \cite{MR1194003}, there is a parametrix $A \in \Psi^{1,-2}$ such that
$$
\begin{cases}
\Box^+ A = I + E, \\
A \Box^+ = I + E'
\end{cases}
$$
where $E, E' \in \Psi^{-\infty}$. 

\begin{center}
\textbf{Acknowledgments}
\end{center}

Elias M. Stein was supported in part by the National Science Foundation (DMS-
0901040). Po-Lam Yung was supported in part by the National Science Foundation
(DMS-1201474).

\begin{bibdiv}
\begin{biblist}

\bib{MR953082}{book}{
   author={Beals, Richard},
   author={Greiner, Peter},
   title={Calculus on Heisenberg manifolds},
   series={Annals of Mathematics Studies},
   volume={119},
   publisher={Princeton University Press},
   place={Princeton, NJ},
   date={1988},
   pages={x+194},
}

\bib{MR0167862}{article}{
   author={Campanato, S.},
   title={Propriet\`a di una famiglia di spazi funzionali},
   language={Italian},
   journal={Ann. Scuola Norm. Sup. Pisa (3)},
   volume={18},
   date={1964},
   pages={137--160},
}

\bib{MR1194003}{article}{
   author={Chang, D.-C.},
   author={Nagel, A.},
   author={Stein, E. M.},
   title={Estimates for the $\overline\partial$-Neumann problem in
   pseudoconvex domains of finite type in ${\bf C}^2$},
   journal={Acta Math.},
   volume={169},
   date={1992},
   number={3-4},
   pages={153--228},
}

\bib{MR1111745}{article}{
   author={Epstein, C. L.},
   author={Melrose, R. B.},
   author={Mendoza, G. A.},
   title={Resolvent of the Laplacian on strictly pseudoconvex domains},
   journal={Acta Math.},
   volume={167},
   date={1991},
   number={1-2},
   pages={1--106},
}

\bib{MR0367477}{article}{
   author={Folland, G. B.},
   author={Stein, E. M.},
   title={Estimates for the $\bar \partial _{b}$ complex and analysis on
   the Heisenberg group},
   journal={Comm. Pure Appl. Math.},
   volume={27},
   date={1974},
   pages={429--522},
}

\bib{MR657581}{book}{
   author={Folland, G. B.},
   author={Stein, Elias M.},
   title={Hardy spaces on homogeneous groups},
   series={Mathematical Notes},
   volume={28},
   publisher={Princeton University Press},
   place={Princeton, N.J.},
   date={1982},
   pages={xii+285},
}

\bib{MR2602167}{article}{
   author={G{\l}owacki, Pawe{\l}},
   title={Composition and $L^2$-boundedness of flag kernels},
   journal={Colloq. Math.},
   volume={118},
   date={2010},
   number={2},
   pages={581--585},
}

\bib{MR2679042}{article}{
   author={G{\l}owacki, Pawe{\l}},
   title={Correction to ``Composition and $L^2$-boundedness of flag
   kernels'' [MR2602167]},
   journal={Colloq. Math.},
   volume={120},
   date={2010},
   number={2},
   pages={331},
}

\bib{MR0499319}{book}{
   author={Greiner, P. C.},
   author={Stein, E. M.},
   title={Estimates for the $\overline \partial $-Neumann problem},
   note={Mathematical Notes, No. 19},
   publisher={Princeton University Press},
   place={Princeton, N.J.},
   date={1977},
   pages={iv+195},
}

\bib{MR556266}{article}{
   author={Krantz, Steven G.},
   title={Geometric Lipschitz spaces and applications to complex function
   theory and nilpotent groups},
   journal={J. Funct. Anal.},
   volume={34},
   date={1979},
   number={3},
   pages={456--471},
}

\bib{MR1312498}{article}{
   author={M{\"u}ller, Detlef},
   author={Ricci, Fulvio},
   author={Stein, Elias M.},
   title={Marcinkiewicz multipliers and multi-parameter structure on
   Heisenberg (-type) groups. I},
   journal={Invent. Math.},
   volume={119},
   date={1995},
   number={2},
   pages={199--233},
}

\bib{MR1376298}{article}{
   author={M{\"u}ller, Detlef},
   author={Ricci, Fulvio},
   author={Stein, Elias M.},
   title={Marcinkiewicz multipliers and multi-parameter structure on
   Heisenberg (-type) groups. II},
   journal={Math. Z.},
   volume={221},
   date={1996},
   number={2},
   pages={267--291},
}

\bib{MR1818111}{article}{
   author={Nagel, Alexander},
   author={Ricci, Fulvio},
   author={Stein, Elias M.},
   title={Singular integrals with flag kernels and analysis on quadratic CR
   manifolds},
   journal={J. Funct. Anal.},
   volume={181},
   date={2001},
   number={1},
   pages={29--118},
}

\bib{MR2949616}{article}{
   author={Nagel, Alexander},
   author={Ricci, Fulvio},
   author={Stein, Elias},
   author={Wainger, Stephen},
   title={Singular integrals with flag kernels on homogeneous groups, I},
   journal={Rev. Mat. Iberoam.},
   volume={28},
   date={2012},
   number={3},
   pages={631--722},
}

\bib{NRSW2012}{article}{
   author={Nagel, Alexander},
   author={Ricci, Fulvio},
   author={Stein, Elias},
   author={Wainger, Stephen},
   title={Singular integrals with flag kernels on homogeneous groups, II},
   journal={Manuscript in preparation},
}

\bib{MR549321}{book}{
   author={Nagel, Alexander},
   author={Stein, E. M.},
   title={Lectures on pseudodifferential operators: regularity theorems and
   applications to nonelliptic problems},
   series={Mathematical Notes},
   volume={24},
   publisher={Princeton University Press},
   place={Princeton, N.J.},
   date={1979},
   pages={159},
}

\bib{MR0116352}{article}{
   author={Palais, Richard S.},
   title={Natural operations on differential forms},
   journal={Trans. Amer. Math. Soc.},
   volume={92},
   date={1959},
   pages={125--141},
}

\bib{MR526179}{article}{
   author={Phong, D. H.},
   title={On integral representations for the Neumann operator},
   journal={Proc. Nat. Acad. Sci. U.S.A.},
   volume={76},
   date={1979},
   number={4},
   pages={1554--1558},
}

\bib{MR0450623}{article}{
   author={Phong, D. H.},
   author={Stein, E. M.},
   title={Estimates for the Bergman and Szeg\"o projections on strongly
   pseudo-convex domains},
   journal={Duke Math. J.},
   volume={44},
   date={1977},
   number={3},
   pages={695--704},
}

\bib{MR648484}{article}{
   author={Phong, D. H.},
   author={Stein, E. M.},
   title={Some further classes of pseudodifferential and singular-integral
   operators arising in boundary value problems. I. Composition of
   operators},
   journal={Amer. J. Math.},
   volume={104},
   date={1982},
   number={1},
   pages={141--172},
}

\bib{MR2417549}{article}{
   author={Ponge, Rapha{\"e}l S.},
   title={Heisenberg calculus and spectral theory of hypoelliptic operators
   on Heisenberg manifolds},
   journal={Mem. Amer. Math. Soc.},
   volume={194},
   date={2008},
   number={906},
   pages={viii+ 134},
}

\bib{MR0436223}{article}{
   author={Rothschild, Linda Preiss},
   author={Stein, E. M.},
   title={Hypoelliptic differential operators and nilpotent groups},
   journal={Acta Math.},
   volume={137},
   date={1976},
   number={3-4},
   pages={247--320},
}

\bib{MR1232192}{book}{
   author={Stein, Elias M.},
   title={Harmonic analysis: real-variable methods, orthogonality, and
   oscillatory integrals},
   series={Princeton Mathematical Series},
   volume={43},
   note={With the assistance of Timothy S. Murphy;
   Monographs in Harmonic Analysis, III},
   publisher={Princeton University Press},
   place={Princeton, NJ},
   date={1993},
   pages={xiv+695},
}

\bib{StreetBook}{book}{
   author={Street, Brian},
   title={Multiparameter singular integrals},
   series={to appear in the Annals of Mathematics Studies},
   publisher={Princeton University Press},
   place={Princeton},
}

\bib{MR764508}{article}{
   author={Taylor, Michael E.},
   title={Noncommutative microlocal analysis. I},
   journal={Mem. Amer. Math. Soc.},
   volume={52},
   date={1984},
   number={313},
   pages={iv+182},
}
	
\bib{MR2743652}{book}{
   author={Taylor, Michael E.},
   title={Partial differential equations II. Qualitative studies of linear
   equations},
   series={Applied Mathematical Sciences},
   volume={116},
   edition={2},
   publisher={Springer},
   place={New York},
   date={2011},
   pages={xxii+614},
}

\end{biblist}
\end{bibdiv}

\end{document}